\newcommand{\eqmathbox}[2][eq]{\eqmakebox[#1]{$\displaystyle #2$}}
\def\loc{\operatorname{loc}}
\definecolor{citation}{rgb}{0.11,0.67,0.84}
\definecolor{formula}{rgb}{0.1,0.2,0.6}
\definecolor{url}{rgb}{0.11,0.67,0.84}
\newcommand{\medint}{-\kern -,375cm\int}
\newcommand{\medintinrigo}{-\kern -,315cm\int}
\newcommand{\linethrough}{\mathpalette\@thickbar}
\newcommand{\@thickbar}[2]{{#1\mkern0mu\vbox{
    \sbox\z@{$#1#2\mkern-0.5mu$}%
    \dimen@=\dimexpr\ht\tw@-\ht\z@+2\p@\relax 
    \hrule\@height0.5\p@ 
    \vskip\dimen@
    \box\z@}}
}
\newtheorem{theorem}{Theorem}[section]
\newtheorem{lemma}[theorem]{Lemma}
\newtheorem{proposition}[theorem]{Proposition}
\newtheorem{remark}{Remark}[section] 
\newtheorem{corollary}[theorem]{Corollary}
\newtheorem{definition}[theorem]{Definition}
\theoremstyle{plain}  
\numberwithin{equation}{section}
\newcommand{\reqnomode}{\tagsleft@false}
\def\dx{\,{\rm d}x}
\def\dt{\,{\rm d}t}
\def\dy{\,{\rm d}y}
\def \d{\,{\rm d}}
\def\supp{\,{\rm supp}}
\DeclareRobustCommand*{\bfseries}{%
  \not@math@alphabet\bfseries\mathbf
  \fontseries\bfdefault\selectfont
  \boldmath
}
\newlength{\defbaselineskip}
\newcommand{\mint}{\mathop{\int\hskip -1,05em -\, \!\!\!}\nolimits}
\newcommand{\R}{\mathbb{R}}
\newcommand{\tu}{\tilde{u}}
\newcommand{\tv}{\tilde{v}}
\newcommand{\bb}{\tx{B}}
\newcommand{\br}{\tx{B}_{r}}
\newcommand{\brr}{\tx{B}_{2r}}
\newcommand{\dd}{\mathrm{d}}
\newcommand{\mm}{\mathsf{M}}
\newcommand{\ff}{\tx{H}}
\newcommand{\g}{\tx{g}}
\newcommand{\rr}{\varrho}
\newcommand{\snr}[1]{\lvert #1\rvert}
\newcommand{\nr}[1]{\lVert #1 \rVert}
\newcommand{\tx}[1]{\textnormal{\texttt{#1}}}
\def\loc{\operatorname{loc}}
\def\eqn#1$$#2$${\begin{equation}\label#1#2\end{equation}}
\def\supp{\,{\rm supp }}
\DeclareMathOperator*{\essinf}{ess\,inf}
\def\Xint#1{\mathchoice
   {\XXint\displaystyle\textstyle{#1}}%
   {\XXint\textstyle\scriptstyle{#1}}%
   {\XXint\scriptstyle\scriptscriptstyle{#1}}%
   {\XXint\scriptscriptstyle\scriptscriptstyle{#1}}%
   \!\int}
\def\XXint#1#2#3{{\setbox0=\hbox{$#1{#2#3}{\int}$}
     \vcenter{\hbox{$#2#3$}}\kern-.5\wd0}}
\def\dashint{\Xint-}
\def\Xint#1{\mathchoice
	{\XXint\displaystyle\textstyle{#1}}%
	{\XXint\textstyle\scriptstyle{#1}}%
	{\XXint\scriptstyle\scriptscriptstyle{#1}}%
	{\XXint\scriptscriptstyle\scriptscriptstyle{#1}}%
	\!\int}
\def\XXint#1#2#3{{\setbox0=\hbox{$#1{#2#3}{\int}$ }
		\vcenter{\hbox{$#2#3$ }}\kern-.6\wd0}}
\def\dashint{\Xint-}
\title{Vectorial Double Phase obstacle problems}
\author[De Filippis]{Filomena De Filippis}  \address{Filomena De Filippis\\Dipartimento SMFI, Universit\`a di Parma, Viale delle Scienze 53/a, Campus, 43124 Parma, Italy \\ ORCID ID: 0000-0002-2784-1411}
\email{\url{filomena.defilippis@unipr.it}}
\author[Nastasi]{Antonella Nastasi}  \address{Antonella Nastasi\\Department of Engineering, University of Palermo, Viale delle Scienze, 90128, Palermo, Italy\\
 ORCID ID: 0000-0003-1589-2235}
\email{\url{antonella.nastasi@unipa.it}}
\author[Pacchiano Camacho]{Cintia Pacchiano Camacho}  \address{Cintia Pacchiano Camacho\\Instituto de Matem\'aticas, Unidad Cuernavaca, Universidad Nacional Aut\'onoma de M\'exico, Av. Universidad, 62210, Cuernavaca, Morelos, Mexico\\
ORCID ID: 0009-0004-6210-4013}
\email{\url{cintia.pacchiano@im.unam.mx}}
\begin{document}

\subjclass[2020]{35J60, 49N60, 35J50\vspace{1mm}} 

\keywords{Vectorial obstacle problems, gradient partial regularity, double phase energy. \vspace{1mm}}

\begin{abstract}
We investigate  partial regularity for vector valued local minimizers of double phase functionals, under vectorial obstacle type constraints satisfying appropriate topological properties.
\end{abstract}

\maketitle
\begin{center}
\begin{minipage}{12cm}
  \small
  \tableofcontents
\end{minipage}
\end{center}



\section{Introduction}
\noindent
In this paper, we study  gradient partial regularity for a constrained vector valued function \(u\colon \Omega \to \mathbb{R}^N\), $N \geq 3$, which locally minimize a double phase energy and
whose values lie on a target $\mm$ that  satisfies one of the following conditions:
\begin{enumerate}[label=\tx{\alph*.}, ref=\tx{\alph*}]
    \item\label{item:a} $\mm$ is a bounded open subset of $\R^N$ with boundary $ \partial \mm \in C^3$ ;
    \item\label{item:b}$\mm$ is a bounded subregion, with boundary $\partial \mm \in C^3$, of an $\tx{m}$-dimensional submanifold $Y$ of $\R^N$ such that $\overline{\mm} \subset \text{Int}(Y)$;
    \item\label{item:c} $\mm$ is a compact submanifold of $\R^N$ without boundary.
\end{enumerate}
Specifically, in case \ref{item:a} we consider  a  constrained local minimizer of the general double phase functional
\begin{align}\label{functionalG}
\mathcal{G}(u,\Omega)  &:=  \int_{\Omega} (g_{\alpha \beta}(x,u) B^{ij}(x, u) D_{\alpha}u^i D_{\beta}u^j\bigr)^{\frac{p}{2}} 
+
 a(x)\bigl(g_{\alpha \beta}(x,u) B^{ij}(x, u) D_{\alpha}u^i D_{\beta}u^j\bigr)^{\frac{q}{2}} \mathrm{d}x \notag \\ & :=  \int_{\Omega} \tx{g}(x,u,Du) \mathrm{d}x,
\end{align}
where $\Omega$  is a bounded open subset of $\R^n$, $n\geq 2$. The functions \(g\) and \(B\)  satisfy the following properties: for $\alpha,\beta=1,\dots,n$, $i,j=1,\dots,N$, functions $g_{\alpha\beta}: \overline{\Omega} \times \mathbb{R}^N  \to \mathbb{R}$, $B^{ij}: \overline{\Omega}\times \mathbb{R}^N \to \mathbb{R}$ are such that
\begin{equation} \label{ellipticity}
\begin{cases}
& g_{\alpha \beta} = g_{\beta \alpha}, \ B^{ij} = B^{ji} \\
    & g, B \in C^1 \\ 
    & \ell |\eta|^2 \leq g_{\alpha\beta}(x,u)\eta_\alpha \eta_\beta, \ \ell |\xi|^2 \leq  B^{ij}(x,u) \xi^i \xi^j ,
\end{cases}   
\end{equation}
for some positive constant $\ell$, for all $x \in \overline{\Omega}, \eta  \in \R^n, u,\xi \in \R^N$. 
While, in case \ref{item:b} or \ref{item:c}, we consider a constrained local minimizer of the classic double phase functional
\begin{equation}\label{functional}
\mathcal{H}(u,\Omega) := \int_{\Omega} |Du|^p + a(x) |Du|^q  \mathrm{d}\mathscr{H}^n:= \int_\Omega \ff(x,Du) \d\mathscr{H}^n,
\end{equation}
where $\Omega$ is a bounded open subset of a $n$-dimensional Riemannian manifold, \(n\ge 2\), and $\mathscr{H}^n$ is the $n$-dimensional Hausdorff measure on $\Omega$. The coefficient $a(\cdot)$ and the exponents $p,q$ satisfy: for some $\alpha \in (0,1]$, it holds
\begin{equation}\label{pq}
0\leq a(\cdot) \in  C^{0,\alpha}(\Omega), \quad 1 < p < q < p + \alpha, \qquad q < N-1.
\end{equation}
 We will always assume that
 \eqn{M}
 $$ \mm \text{ is } [q]\text{-connected},$$
 where $[q]$ is the integer part of $q$. Into category \ref{item:c} falls, for example, the $(N-1)$-dimensional sphere in $\R^N$, i.e. \(\mathbb{S}^{N-1}\),  which is a compact manifold and \((N-2)\)-connected, hence it is admissible whenever \(q < N-1\). We point out also that assumption $(\ref{M})$ plays a crucial role in our regularity analysis, particularly in ensuring the applicability of the finite energy extension Lemma~\ref{extensiontheorem}. The following remark clarifies the range of admissible values for the exponent \( q \) that are compatible with this topological requirement.
\begin{remark}
\normalfont
The minimal connectivity assumption on the target \( \mm  \) required for the validity of the partial regularity result is that \( \mm \) is \(([q]-1)\)-connected. This condition is necessary and sufficient to ensure the validity of estimate \eqref{(3.5)Notes} in Lemma~\ref{extensiontheorem}. However, in order to apply Lemma~\ref{lem1} under the assumption of \(([q]-1)\)-connectedness, one must impose \( 1 \leq [q]-1 \leq N-2 \), which yields the restriction \( 2 \leq q < N \). Therefore, one may equivalently assume either \( 1 < q < N - 1 \) or \( 2 \leq q < N \), both of which are compatible with our framework.
\end{remark}
\noindent
 The double phase functional was first introduced by Zhikov \cite{Z1, Z0} in the context of homogenization and in connection with Lavrentiev phenomenon. This type of energy functional plays a fundamental role in modeling of strongly anisotropic materials, with applications in elasticity and the theory of composites. 
The transition between \(p\)- and \(q\)-growth regions is governed by the vanishing of the coefficient \(a(\cdot)\). Over the last decade, a rich regularity theory for double phase and more general nonuniformly elliptic functionals has been established. In particular, the contributions of Baroni, Colombo, and Mingione \cite{BCM, comi2, comicz, comi1} developed a comprehensive regularity theory, which in turn generated a huge literature, see \cite{bb901,cjmpa, DLMM, G5, HO1,HO2,HO3,MNP,N,NC,jss} and references therein. For problems with linear and nearly linear growth see \cite{BG, BS, FPS, G1, G2, G3, G4}. The $(p,q)$-growth condition $|z|^p \lesssim \tx{H}(\cdot,z) \lesssim 1+|z|^q$ is directly tied to the nonuniform ellipticity of the associated Euler Lagrange system. Formally, a critical point \(u\) of \(\mathcal{H}\) satisfies
\begin{equation}\label{el}
-\mathrm{div}\,A(x,Du) = 0,
\end{equation}
where the vector field 
\begin{equation*}
    A(x,z) = |z|^{p-2}z + (q/p) a(x)|z|^{q-2}z,
\end{equation*}
 and one sees that the (nonlocal) ellipticity ratio satisfies 
\begin{equation}\label{ratio}
\displaystyle \frac{\sup_{x\in \tx{B}} \text{highest eigenvalue of }\partial_z A(x, Du)} {\inf_{x\in \tx{B}} \text{lowest eigenvalue of }\partial_z A(x, Du)}\,\approx\, 1 + \nr{a}_{L^{\infty}(\tx{B})} |Du|^{q-p}
\end{equation}
on any ball ball $\tx{B} \subset \Omega$ touching the transition set \(\{ a(\cdot)=0\}\) and therefore become unbounded provided $a$ is not identically zero. The condition \eqref{pq} precisely ensures that \(a(\cdot)\) vanishes fast enough to prevent uncontrollable degeneracy, guaranteeing maximal gradient regularity for local minimizers. The sharpness of this hypothesis was first observed by Esposito, Leonetti and  Mingione \cite{ELM}; see also \cite{balci2, balci4, ELM, FMM}. Historically, the notion of coupling growth exponents \(p\) and \(q\) to regulate the rate of blow up of the ellipticity ratio was pioneered by Marcellini \cite{ma2, ma4, ma1, ma5} in his foundational investigations of \((p,q)\)-nonuniformly elliptic integrals. Motivated by delicate phenomena in nonlinear elasticity, such as cavitation, the early investigations showed that choosing \( q \) sufficiently close to \( p \) yields a class of variational energies with controlled blow up. This perspective has also evolved into the framework of strongly nonuniformly elliptic functionals. In this setting, also the pointwise ellipticity ratio blows up (when $|z| \to +\infty$) in a nonbalanced fashion, namely
\begin{equation*}
\displaystyle \frac{\text{highest eigenvalue of }\partial_z A(x, Du)} {\text{lowest eigenvalue of }\partial_z A(x, Du)}\,\approx\, 1 + |z|^{\delta}, \quad \delta >0.
\end{equation*}
Although the underlying structure is strongly nonuniformly elliptic, recent advances have led to the development of a comprehensive Schauder theory \cite{DefS1, DefS3}, encompassing also problems at nearly linear growth, see \cite{DefS2, DFP, DFDFP}.\\

\noindent In our present study, we extend these insights to the vectorial obstacle setting.  To precisely state our results, for $ \tilde{\mm}$ that is   
    $\overline{\mm}$ in case \ref{item:a},\ref{item:b} and $\mm$ in case \ref{item:c}, we introduce the admissible class
$$ W^{1,1}_{\loc}(\Omega, \tilde{\mm}) := \{u \in W^{1,1}_{\loc}(\Omega, \R^N):  \text{Im}(u) \subset \tilde{\mm} \text{ holds a.e.}\}$$
and, for convenience, we collect the main parameters of the problem in the quantity
$$ \tx{data} \equiv \tx{data}(n,N,\ell,L,p,q,\alpha,[a]_{0,\alpha},\tilde{\mm}),$$
where $L > \ell$ bounds $\|g_{\alpha\beta},B^{ij}\|_{L^\infty(\overline{\Omega}\times \tilde{\mm})}$ when we consider functional \eqref{functionalG} and the function $\|h_{\alpha\beta}\|_{L^\infty(\overline{\Omega})}$ arising in \eqref{funpre} when considering functional \eqref{functional}. Our main theorems asserts that any constrained local minimizer \(u \in W^{1,1}_{\loc}\) is of class \(C^{1,\beta}\) on an open subset \(\tilde{\Omega} \subset \Omega\), called regular set, with \(\beta \in (0,1)\). Moreover, the complement \(\Omega \setminus \tilde{\Omega}\) is a closed set of zero Lebesgue measure. This is the so-called {\em partial regularity}: 
\begin{theorem} \label{maintheorem}
   Let $\Omega$ be a  bounded open subset of $\R^n$ and let $u \in W^{1,1}_{\loc}(\Omega, \overline{\mm})$ be a local minimizer of the functional $\mathcal{G}$ in \eqref{functionalG} under assumptions \eqref{pq}, \eqref{ellipticity}, with $\mm$ as in \ref{item:a} and satisfying \eqref{M}. Then, there exists an open subset $\Tilde{\Omega} \subset \Omega$ such that
\eqn{stat}
    $$ Du \in C^{0,\beta}_{\loc}(\Tilde{\Omega}, \R^{N \times n}) \text{ and } |\Omega \setminus \Tilde{\Omega}| =0,$$
     for some $\beta \equiv \beta(\tx{data}) \in (0,1)$. 
\end{theorem}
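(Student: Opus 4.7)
The plan is to combine the excess-decay strategy for vectorial partial regularity of double phase systems with a constrained extension argument that transfers known unconstrained results to the obstacle setting. The crucial structural assumption allowing us to compare the obstacle minimizer $u$ with unconstrained competitors is the $[q]$-connectedness of $\mm$, which activates the finite energy extension Lemma \ref{extensiontheorem} and lets us project perturbed competitors back into $\overline{\mm}$ at a controlled energy cost.

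First, I would establish a local higher integrability result of the form $\tx{g}(\cdot,u,Du)\in L^{1+\delta}_{\loc}(\Omega)$ for some $\delta\equiv\delta(\tx{data})>0$. The starting point is a Caccioppoli inequality obtained by testing the minimality of $u$ against $w=u-\varphi$ with a cut-off perturbation $\varphi$; since $w$ must still land in $\overline{\mm}$, the extension Lemma is used to correct $\varphi$, and this is exactly the step where $[q]$-connectedness is indispensable. Coupling the resulting Caccioppoli inequality with a suitable Sobolev--Poincar\'e inequality and Gehring's lemma yields the self-improvement, which is the technical ingredient needed to close the $(p,q)$-gap in later comparisons.

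Next, for a fixed ball $\br(x_0)\Subset\Omega$, I would freeze the coefficients at $\bigl(x_0,(u)_{\br(x_0)}\bigr)$ and consider the frozen integrand
$$
\tx{g}_0(z) := \bigl(g_{\alpha\beta}^{0} B^{0,ij} z^i_\alpha z^j_\beta\bigr)^{p/2} + a(x_0)\bigl(g_{\alpha\beta}^{0} B^{0,ij} z^i_\alpha z^j_\beta\bigr)^{q/2},
$$
and let $v$ be the minimizer of the corresponding unconstrained functional on $\br(x_0)$ with $v=u$ on $\partial\br(x_0)$. The known partial regularity theory for vectorial double phase systems, e.g. in the spirit of \cite{DefS1,DefS3,DLMM}, provides, at flat points, a Campanato-type decay
$$
\int_{\tx{B}_{\tau r}(x_0)} |Dv-(Dv)_{\tx{B}_{\tau r}(x_0)}|^2 \, dx \leq C\,\tau^{n+2\beta_0}\int_{\br(x_0)} |Dv-(Dv)_{\br(x_0)}|^2 \, dx
$$
for some $\beta_0\in(0,1)$ and all $\tau\in(0,1)$. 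To transfer this decay to $u$, I would bound $\|V(Du)-V(Dv)\|_{L^2(\br(x_0))}$ by comparing the Euler--Lagrange systems for $u$ and for the frozen problem, absorbing the errors produced by the H\"older moduli of $g_{\alpha\beta}$, $B^{ij}$, $a$ in $x$ and in $u$, with the Hölder-in-$u$ term controlled via higher integrability and Morrey embedding. Crucially, to make $v$ an admissible competitor for $u$ one uses the extension Lemma once more, forcing the difference $u-v$ to be corrected into a $\overline{\mm}$-valued variation, and the assumption $q<p+\alpha$ ensures the excess gained in Step 1 dominates this correction.

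Combining these two ingredients and iterating on dyadically shrinking balls produces, at every point of smallness of the excess, a Morrey--Campanato estimate for $Du$. This identifies the regular set
$$
\tilde{\Omega} := \Bigl\{ x_0\in\Omega \ :\ \liminf_{r\downarrow 0}\ \fint_{\br(x_0)}|V(Du)-(V(Du))_{\br(x_0)}|^2\,dx = 0 \ \text{and}\ \limsup_{r\downarrow 0}|(Du)_{\br(x_0)}|<\infty \Bigr\},
$$
which is open, on which $Du\in C^{0,\beta}_{\loc}$ for some $\beta\equiv\beta(\tx{data})\in(0,1)$; the standard Lebesgue-point argument then gives $|\Omega\setminus\tilde{\Omega}|=0$. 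I expect the main analytic obstacle to be the comparison step: balancing the $V$-distance between $u$ and the free-model solution $v$ while guaranteeing that every competitor one inserts is valued in $\overline{\mm}$. This is where the interplay between the $(p,q)$-gap $q<p+\alpha$ (which fixes the available higher integrability $\delta$) and the exponent loss produced by the extension Lemma is delicate, and it is this interplay that ultimately determines the regularity exponent $\beta$ in \eqref{stat}.
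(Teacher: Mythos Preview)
Your proposal has the right building blocks (Caccioppoli via the extension lemma, Gehring, comparison with a frozen unconstrained minimizer), but the route you sketch departs from the paper's in a way that leaves a genuine gap. The paper does \emph{not} go directly for excess decay of $Du$; it proceeds in two stages. First it shows that under an \emph{energy} smallness condition of the form
\[
\mint_{\tx{B}_{2r}(\tilde x)}\ff(x,Du)\,dx<\ff^-_{\tx{B}_{2r}}\!\left(\tfrac{\varepsilon}{2r}\right)
\]
the map $u$ itself is $C^{0,\gamma}$ for every $\gamma<1$ near $\tilde x$; only after this is established does a second comparison (now exploiting $[u]_{0,\gamma}$) yield Campanato decay for $Du$. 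The reason this intermediate step is needed is exactly the one you gloss over: to control the freezing error in the $u$-variable of $g_{\alpha\beta}(x,u)$ and $B^{ij}(x,u)$ you need $|u-(u)_{\tx{B}_r}|$ small, and your suggestion of ``higher integrability and Morrey embedding'' does not deliver this when $p(1+\delta)\le n$, which is the principal case. The paper's regular set is accordingly described through the energy quantity $[\ff^-_{\tx{B}_\varrho}(1/\varrho)]^{-1}\mint_{\tx{B}_\varrho}\ff(x,Du)\,dx$, not through a $V(Du)$-excess; the $|\Omega\setminus\tilde\Omega|=0$ conclusion comes from $\tx{H}(\cdot,Du)\in L^{1+\delta}_{\loc}$ together with a density argument, not from a Lebesgue-point characterization of $V(Du)$.

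Two further points. First, the paper compares $u$ and the free minimizer $v$ through the constrained Euler--Lagrange system derived in Section~3 (the constraint produces a right-hand side $\tx{f}$ of growth $\ff(x,Du)$ supported on $\{u\in\partial\mm\}$), testing both systems with $\varphi=u-v$; this replaces your idea of ``using the extension Lemma once more to make $v$ an admissible competitor for $u$'', which is not quite right since $v$ is $\R^N$-valued and projecting it destroys the equation it satisfies. Second, in the paper's first comparison only $g,B$ are frozen while $a(x)$ is kept live; the passage to a fully frozen integrand $\ff^-_{\tx{B}_{\tilde s}}$ is done afterwards via a separate \emph{harmonic approximation} lemma (Lemma~\ref{approximation}), together with a $p$-phase/$(p,q)$-phase dichotomy calibrated by $[u]_{0,\gamma}$. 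This harmonic approximation step is the engine of Step~3 and is missing from your outline.
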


\begin{theorem} \label{maintheorem2}
Let $\Omega$ be a  bounded open subset of a $n$-dimensional Riemannian manifold and let $u \in W^{1,1}_{\loc}(\Omega, \overline{\mm})$ be a local minimizer of the functional $\mathcal{H}$ in \eqref{functional} under assumptions \eqref{pq}, with $\mm$ as in \ref{item:b} and satisfying \eqref{M}. Then, there exists an open subset $\Tilde{\Omega} \subset \Omega$ such that \eqref{stat} holds true, for some $\beta \equiv \beta(\tx{data}) \in (0,1)$. 
\end{theorem}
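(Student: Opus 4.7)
The strategy is to reduce Theorem~\ref{maintheorem2} to Theorem~\ref{maintheorem} via a local chart argument on both the base Riemannian manifold and the ambient submanifold $Y$. Since partial regularity is a local property, it suffices to work in a small neighborhood of an arbitrary point $x_0 \in \Omega$, rewrite the constrained problem for $\mathcal{H}$ in local coordinates, and recognize it as an instance of the functional $\mathcal{G}$ in \eqref{functionalG}, which is already covered by Theorem~\ref{maintheorem}.

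\emph{Step 1 (flattening the base).} I would choose a coordinate chart $\psi:U\to V\subset\mathbb{R}^n$ around $x_0$ in which the Riemannian metric pulls back to a $C^\infty$ matrix $(h_{\alpha\beta}(x))$ with uniform ellipticity, and the measure $\mathscr{H}^n$ on $U$ pulls back to $\sqrt{\det(h_{\alpha\beta})}\,\mathrm{d}x$. Then $|Du|^2 = h^{\alpha\beta}(x)\,\delta_{ij}\,D_\alpha u^i D_\beta u^j$, and, up to the smooth positive scalar factor $\sqrt{\det(h_{\alpha\beta})}$ that can be absorbed into a multiplicative constant in front of the $p$-term and into $a(\cdot)$ in front of the $q$-term—without disturbing either the $(p,q)$-growth or the $C^{0,\alpha}$-regularity of the coefficient—the density $\ff(x,Du)$ takes the form $(g_{\alpha\beta}(x)\delta^{ij}D_\alpha u^i D_\beta u^j)^{p/2} + \tilde a(x)(g_{\alpha\beta}(x)\delta^{ij}D_\alpha u^i D_\beta u^j)^{q/2}$, where $g_{\alpha\beta}$ and $\tilde a$ still comply with \eqref{pq} and \eqref{ellipticity}.

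\emph{Step 2 (straightening the target).} Using $\overline{\mm}\subset\text{Int}(Y)$ and the $C^3$-regularity of $\partial\mm$, for any $y_0\in\overline{\mm}$ there is a $C^3$-diffeomorphism $\Phi:W\to\mathcal{U}_{y_0}\subset Y$ with $W\subset\mathbb{R}^{\tx{m}}$ open. Setting $v:=\Phi^{-1}\circ u$, the pullback of the induced metric on $Y$ yields a $C^1$, uniformly positive, symmetric matrix $B^{ij}(v)$ on $\overline{\Phi^{-1}(\mm\cap\mathcal{U}_{y_0})}$ with $|Du|^2=B^{ij}(v)\,D_\alpha v^i D_\beta v^j$; meanwhile, the image $\Phi^{-1}(\mm\cap\mathcal{U}_{y_0})\subset\mathbb{R}^{\tx{m}}$ is open, bounded, has $C^3$ boundary and is $[q]$-connected because $\Phi$ is a diffeomorphism, so it satisfies the hypotheses of case~\ref{item:a} and \eqref{M}. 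Composing with Step~1 recasts the constrained local minimality of $u$ for $\mathcal{H}$ as the constrained local minimality of $v$ for a functional exactly of the type \eqref{functionalG}; Theorem~\ref{maintheorem} then supplies an open full-measure subset $\widetilde{V}\subset V$ on which $Dv\in C^{0,\beta}_{\loc}$, and pulling back by $\Phi$ and $\psi$ yields \eqref{stat} on a neighborhood of $x_0$.

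\textbf{Main obstacle.} The delicate point is making $\Phi$ genuinely usable: one needs that, on some small ball $B_r(x_0)$, the image $u(B_r(x_0))$ lies a.e.\ in a single chart neighborhood $\mathcal{U}_{y_0}$ of $Y$. Since $u$ is not a priori continuous, this forces the reduction to start at a Lebesgue point $x_0$ of $u$, with $y_0=u(x_0)$, and to exploit the tubular neighborhood of $Y$ (available from its $C^3$ regularity) together with the $L^\infty$ bound $u\in\overline{\mm}$ to pick $r$ so small that $v=\Phi^{-1}\circ u$ is well defined a.e.\ on $B_r(x_0)$. The finite-energy extension Lemma~\ref{extensiontheorem}, valid under \eqref{M}, is what permits pushing admissible variations from $\mathbb{R}^{\tx{m}}$ back to $Y$, and hence guarantees that the comparison maps used in the proof of Theorem~\ref{maintheorem} still produce admissible variations for the original constrained problem after composition with $\Phi$ and $\psi$. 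Global partial regularity of $u$ then follows because the set of Lebesgue points of $u$ has full measure in $\Omega$, so the local regular sets provided by Theorem~\ref{maintheorem} cover $\Omega$ up to a negligible set.
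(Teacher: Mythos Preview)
Your Step 1 matches the paper's argument, but Step 2 contains a genuine gap that you anticipated under ``Main obstacle'' but did not actually close. The composition $v=\Phi^{-1}\circ u$ requires $u(B_r(x_0))\subset\mathcal{U}_{y_0}$ almost everywhere, and the Lebesgue-point argument you sketch does not deliver this: at a Lebesgue point one only has $\mint_{B_r}|u-u(x_0)|\,\dx\to 0$, which is an averaged statement and nowhere near the pointwise $L^\infty$ containment you need. Neither the tubular neighborhood of $Y$ nor the bound $u\in\overline{\mm}$ helps, since both control the position of $u$ in $\mathbb{R}^N$ relative to $Y$, not its position \emph{within} $Y$. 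There are further structural problems: after straightening, the target lives in $\mathbb{R}^{\tx{m}}$, so Theorem~\ref{maintheorem} as a black box would require $q<\tx{m}-1$, strictly stronger than the standing hypothesis $q<N-1$ of \eqref{pq}; and the set $\Phi^{-1}(\mm\cap\mathcal{U}_{y_0})$ will in general have corners where $\partial\mm$ meets $\partial\mathcal{U}_{y_0}$, so the $C^3$-boundary assumption of case~\ref{item:a} need not hold.

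The paper sidesteps all of this by never straightening the target. After Step 1 the functional takes the form \eqref{funpre}, a special instance of $\mathcal{G}$ with $B^{ij}=\delta^{ij}$ and $g_{\alpha\beta}=h_{\alpha\beta}(x)$ independent of $u$, while $u$ still maps into $\overline{\mm}\subset\mathbb{R}^N$ viewed extrinsically. The Euler--Lagrange system \eqref{euler2} for case~\ref{item:b} has exactly the same structure as \eqref{equation2}: the extra curvature term $D^2\Pi(u)$ is absorbed into the right-hand side $\tx{f}$, which still obeys the growth bound $|\tx{f}(x,u,z)|\leq c(|z|^p+a(x)|z|^q+|z|^{p-1}+a(x)|z|^{q-1})$. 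Since Lemma~\ref{extensiontheorem} and hence Lemma~\ref{CaccioppoliLemma} and Lemma~\ref{lemmaHI} are stated for an arbitrary compact $[q]$-connected submanifold $\tilde{\mm}\subset\mathbb{R}^N$, not only for case~\ref{item:a}, the entire three-step proof of Theorem~\ref{maintheorem} runs verbatim with \eqref{euler2} in place of \eqref{equation}. This is what the paper means by ``the proof follows directly from the arguments already developed for Theorem~\ref{maintheorem}''.
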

\noindent 
In the case where \( \mm \) is a manifold as in \ref{item:c}, the result follows directly from the previous theorem, as the Euler-Lagrange equation simplifies due to the absence of points \( x \in \Omega \) such that \( u(x) \in \partial \mm \), see Theorem \ref{maintheorem3}.
In De Filippis and Mingione \cite{DFM}, a similar problem is treated for a density (also depending on \(u\)) which is bounded above and below by the double phase integrand, in the special case \(\mathsf{M} = \mathbb{S}^{n-1}\). 
When $a(\cdot) \equiv 0$, we are dealing with functionals with 
$p$-growth, as employed in the partial regularity results in \cite{KM1, KM, MMS}. Regularity results have also been developed in the setting where both minimizers and admissible maps take values in a manifold $\mm \subset \R^N$. The case $\mm = \mathbb{S}^{N-1}$ has been studied in depth, beginning with the classical works of Eells \& Sampson~\cite{ES} and Schoen \& Uhlenbeck~\cite{SU1, SU2} on harmonic maps, which correspond to the case $p = 2$ and $a(\cdot) \equiv 0$ in \eqref{functional}. For variational integrals with general \(p\)-growth (\(p \ge 2\)), Fuchs \cite{F1, F2, F3}, Hardt and Lin \cite{HL}, and Luckhaus \cite{L} have adapted harmonic map techniques to wider applications, see also \cite{HL2}. While the study of vectorial problems with variable exponent (i.e. $p(x)$-energies) has been treated in \cite{T} and extended to the manifold constrained setting in \cite{C2, DF}. In the quasiconvex case, analogous questions were addressed by Hopper \cite{H}. For free  boundaries problems, we refer to the recent works \cite{FF,FFF} and references therein.\\

As mentioned before, these kinds of results find their place in a large literature devoted to so-called partial regularity, that is regularity of solutions outside a negligible closed subset. such results, the only possible in the general vectorial case,  have been pioneered by Giusti, Miranda and Morrey in the classical papers \cite{giumi, morrey}, where partial H\"older cotniuity of solutions has been proved. Later on, more results of this kind have been proved at the boundary \cite{JM, gro} and \cite{fomi} for more general structures. We refer to the classical treatises \cite{gia, giusti} for an overview on gradient partial regularity as well as on the presentations given in \cite{KM1, KM}.\\

\noindent Now, we want to point out that for obstacle problems where the constraint set \( \mm \) is convex, the minimization property can be reformulated as a variational inequality. In contrast, for general \( \mm \) such linearization is nontrivial. Early work in \cite{F0} first tackled this issue, with subsequent refinements in \cite{D, DF2} developing sharper linearization techniques, here we adopt the version proposed in \cite{F1}. Our approach shows that Theorem \ref{maintheorem2} stems directly from Theorem \ref{maintheorem}, whose proof relies on two main ingredients. The first consists in adapting the classical framework developed by Fuchs \cite{F1}, which is based on a direct comparison between the constrained minimizer $u$ of the functional $\mathcal{G}$ in \eqref{functionalG} and the unconstrained minimizer (having $u$ as boundary datum) of $\mathcal{G}$ with frozen coefficients $g$ and $B$.  The second component of the approach builds on the harmonic approximation method used in \cite{DFM}. This allows us to involve in the comparison scheme also an harmonic map enjoying better regularity properties. In contrast to Fuchs’s work, a key role in this analysis is played by the topological assumption that \(\mathsf{M}\) is \([q]\)-connected. This property ensures the possibility of constructing finite energy extensions of Sobolev maps into \(\overline{\mm}\), which is formalized in Lemma~\ref{extensiontheorem},  this in turn yields an appropriate Caccioppoli's inequality. Notably, unlike in Fuchs’s setting, our approach also covers the case  $1<p<2$.
The proof of Theorem \ref{maintheorem} is based on the following partial regularity criterion: if there exists an $\varepsilon \in (0,1)$ such that on a ball $\tx{B}_{r}(x) \Subset \Omega$ it holds
\eqn{smallness1}
$$ \mint_{\br(x)} \ff(x,Du) \dx < \ff_{\br
(x)}^-\left ( \frac{\varepsilon}{r}\right ) $$
then $u$ is H\"older continuous
near $x$, where $\ff^-_{\tx{B}_r}$ is the frozen integrand defined in \eqref{frozen funct}. The $\varepsilon$-regularity condition \eqref{smallness1} provides an intrinsic quantified version of the amount of energy needed for regularity and it is related to the structure of the so-called singular set  $\tilde{\Sigma}:= \Omega \setminus \tilde{\Omega}$, that takes the form
$$ \tilde{\Sigma} :=\left \{ \tilde{x} \in \Omega: \limsup_{\rr \to 0} \left [\ff^{-}_{\tx{B}_{\rr}(\tilde{x})}\left ( \frac{1}{\rr}\right ) \right ]^{-1} \mint_{\tx{B}_\rr(\tilde{x})} \ff(x,Du) \dx >0\right \},$$
see Remark \ref{rm3.1}. To conclude, we underline that as an intermediate outcome (Step 1 of Theorem \ref{maintheorem}) we also get that
$$\text{for all  } \gamma \in (0,1) \text{  it holds  } u \in C^{0,\gamma}(\Tilde{\Omega}, \overline{\mm}) \text{  and  } |\Omega \setminus \Tilde{\Omega}| =0.$$
As a consequence of the partial regularity of the gradient established in Theorem \ref{maintheorem}, we are now in a position to derive  more refined estimates for the Hausdorff measure $\mathscr{H}$ of the singular set $\tilde{\Sigma}= \Omega \setminus \tilde{\Omega}$. We present this final improvement in the next result. For the complete notation we refer to Section \ref{sec4}. 
\begin{theorem} \label{theorem2}
    Let $\Omega$ be a  bounded open subset of $\R^n$ and let $u \in W^{1,1}_{\loc}(\Omega, \overline{\mm})$ be a local minimizer of the functional $\mathcal{G}$ in \eqref{functionalG}, assume \eqref{pq}, \eqref{ellipticity}, with $\mm$ as in \ref{item:a} and satisfying \eqref{M}. Then, there exists $\delta \equiv \delta(\tx{data})>0$ such that
    \eqn{suH}
    $$ \tx{H}(\cdot, Du) \in L^{1+\delta}_{\loc}(\Omega).$$
    Assume that $q(1+\delta) \leq n$, then
       \eqn{2.23}
    $$ \mathscr{H}_{\ff^{1+\delta}}(\tilde{\Sigma})=0.$$
    In particular, 
    \eqn{2.2} 
    $$ \mathscr{H}^{n-p-p\delta}(\tilde{\Sigma}) =0,$$
        \eqn{2.3} 
    $$  \mathscr{H}^{n-q-q\delta}(\tilde{\Sigma} \cap \{ a(x) >0\}) =0.$$
\end{theorem}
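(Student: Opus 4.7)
My plan has three stages: first, establish the higher integrability \eqref{suH}; second, invoke it in a Vitali covering argument to derive \eqref{2.23}; and third, read off the specific bounds \eqref{2.2} and \eqref{2.3} from the pointwise lower bounds on $\ff$. For the first stage, I would derive a Caccioppoli-type inequality by testing the local minimality of $\mathcal{G}$ with target-respecting perturbations produced via the finite energy extension of Lemma \ref{extensiontheorem}; this is precisely where the $[q]$-connectedness assumption \eqref{M} enters. Combined with a Sobolev–Poincaré inequality adapted to the double phase integrand $\ff$, this yields a reverse Hölder inequality
$$
\mint_{\tx{B}_{\rr/2}(x_0)} \ff(x,Du)\,\dx \,\leq\, c\left(\mint_{\tx{B}_\rr(x_0)} \ff(x,Du)^{\theta}\,\dx\right)^{1/\theta}
$$
for some $\theta \in (0,1)$ depending on $\tx{data}$; the gap condition $q < p+\alpha$ in \eqref{pq} is used to absorb the Hölder-continuous coefficient $a(\cdot)$ across the two phases. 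A Gehring-type self-improvement then produces $\delta \equiv \delta(\tx{data}) > 0$ with $\ff(\cdot, Du) \in L^{1+\delta}_{\loc}(\Omega)$.

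To prove \eqref{2.23}, I would decompose $\tilde{\Sigma} = \bigcup_{k \in \N} \tilde{\Sigma}_k$, where $\tilde{\Sigma}_k$ consists of those $x \in \Omega$ at which the limsup in the definition of $\tilde{\Sigma}$ exceeds $1/k$. Fixing $\tau > 0$, every $x \in \tilde{\Sigma}_k$ admits arbitrarily small radii $\rr < \tau$ with $\mint_{\tx{B}_\rr(x)} \ff(y,Du)\,\mathrm{d}y > \tfrac{1}{k}\,\ff^-_{\tx{B}_\rr(x)}(1/\rr)$, so a standard Vitali-type selection produces a countable disjoint family $\{\tx{B}_{\rr_j}(x_j)\}$ whose $5$-dilates cover $\tilde{\Sigma}_k$ and satisfy the same bound. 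Raising to the $(1+\delta)$-th power and applying Jensen's inequality on each ball converts this into the pointwise estimate $\rr_j^n\,\ff^-_{\tx{B}_{\rr_j}(x_j)}(1/\rr_j)^{1+\delta} \leq c\,k^{1+\delta} \int_{\tx{B}_{\rr_j}(x_j)} \ff(y,Du)^{1+\delta}\,\mathrm{d}y$, so summing and comparing the gauge on the $5$-dilates to that on $\tx{B}_{\rr_j}(x_j)$ yields
$$
\sum_j (5\rr_j)^n\, \ff^-_{\tx{B}_{5\rr_j}(x_j)}(1/(5\rr_j))^{1+\delta} \,\leq\, c\,k^{1+\delta}\int_{\bigcup_j \tx{B}_{\rr_j}(x_j)} \ff(y,Du)^{1+\delta}\,\mathrm{d}y.
$$
The right-hand side vanishes as $\tau \to 0$ by absolute continuity of the integral of $\ff(\cdot,Du)^{1+\delta}$ granted by \eqref{suH}; hence $\mathscr{H}_{\ff^{1+\delta}}(\tilde{\Sigma}_k) = 0$, and summing over $k$ gives \eqref{2.23}.

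For the particular estimates, I would use the pointwise inequalities $\ff(x,z) \geq |z|^p$ and $\ff(x,z) \geq a(x)|z|^q$. The first gives $\ff^-_{\tx{B}_\rr}(1/\rr) \geq \rr^{-p}$ globally, so the gauge $\rr^n\, \ff^-_{\tx{B}_\rr}(1/\rr)^{1+\delta}$ dominates $\rr^{n-p-p\delta}$, and \eqref{2.2} follows from \eqref{2.23} under the dimensional assumption $q(1+\delta) \leq n$. On $\{a > 0\}$, exhausting by compact subsets on which $a$ is uniformly positive from below and using the Hölder continuity of $a$ to control oscillations on small balls, the gauge dominates $\rr^{n-q-q\delta}$ on each such subset, giving \eqref{2.3}. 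The main obstacle is Step 1: building target-respecting comparison maps without degrading the $\ff$-energy depends crucially on Lemma \ref{extensiontheorem} and the topological hypothesis \eqref{M}, and closing the reverse Hölder inequality in the $(p,q)$ gap regime requires a careful splitting of the integrand exploiting $q < p+\alpha$.
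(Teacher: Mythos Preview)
Your proposal is correct and follows essentially the same route as the paper: the higher integrability is exactly Lemma~\ref{lemmaHI} (Caccioppoli via Lemma~\ref{extensiontheorem}, intrinsic Sobolev--Poincar\'e, Gehring), the Vitali covering argument for \eqref{2.23} is what the paper imports from \cite[Theorem~3]{DFM}, and your gauge comparisons for \eqref{2.2}--\eqref{2.3} mirror \eqref{hp} and \eqref{hq}. One technical point you should make explicit: the sum $\sum_j (5\rr_j)^n\,\ff^{-}_{\tx{B}_{5\rr_j}}(1/(5\rr_j))^{1+\delta}$ controls $\mathscr{H}^{-}_{\ff^{1+\delta}}(\tilde{\Sigma}_k)$ rather than $\mathscr{H}_{\ff^{1+\delta}}(\tilde{\Sigma}_k)$, so to conclude \eqref{2.23} you still need the comparison $\mathscr{H}_{\ff^{1+\delta}} \leq c\,\mathscr{H}^{-}_{\ff^{1+\delta}}$ of Proposition~\ref{prop2}, which is where $q \leq p+\alpha$ and $a \in C^{0,\alpha}$ enter a second time.
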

\begin{theorem} \label{the1.4}
      Let $\Omega$ be a  bounded open subset of a $n$-dimensional Riemannian manifold and let $u \in W^{1,1}_{\loc}(\Omega, \overline{\mm})$ be a local minimizer of the functional $\mathcal{H}$ in \eqref{functional}, assume \eqref{pq}, with $\mm$ as in \ref{item:b} and satisfying \eqref{M}. Then, there exists $\delta \equiv \delta(\tx{data})>0$ such that \eqref{suH} holds. Moreover, if $q(1+\delta) \leq n$, then \eqref{2.23}-\eqref{2.3} are true.
\end{theorem}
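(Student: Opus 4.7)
The plan is to parallel the argument used for Theorem~\ref{theorem2}, adapted to the Riemannian setting and to the simpler functional $\mathcal{H}$. The argument splits into two main parts: first, establishing the higher integrability \eqref{suH}; then, deriving the Hausdorff dimension estimates \eqref{2.23}--\eqref{2.3}.

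For the higher integrability step, I would work in a finite atlas of local charts on the Riemannian manifold, in which $\mathcal{H}$ reads, up to volume factors, as an integral of the form $\int (h_{\alpha\beta}(x) D_\alpha u^i D_\beta u^i)^{p/2} + a(x)(h_{\alpha\beta}(x) D_\alpha u^i D_\beta u^i)^{q/2}\,\dx$. The crucial ingredient is a Caccioppoli-type inequality for the $\overline{\mm}$-valued minimizer: the topological assumption \eqref{M} combined with the finite-energy extension Lemma~\ref{extensiontheorem} produces admissible competitors that agree with $u$ on a suitable annulus while remaining $\overline{\mm}$-valued, with controlled double-phase energy. Coupling this with the minimality of $u$ and a standard hole-filling argument yields a reverse Hölder inequality for $\ff(\cdot,Du)$, and Gehring's lemma finally produces an exponent $\delta\equiv\delta(\tx{data})>0$ with $\ff(\cdot,Du)\in L^{1+\delta}_{\loc}(\Omega)$.

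For the Hausdorff estimate \eqref{2.23}, I would invoke the singular set characterization established in the proof of Theorem~\ref{maintheorem2}: every $\tilde{x}\in\tilde{\Sigma}$ satisfies
\[
\limsup_{\rho\to0}\,\left[\ff^{-}_{\tx{B}_\rho(\tilde{x})}\!\left(\tfrac{1}{\rho}\right)\right]^{-1}\mint_{\tx{B}_\rho(\tilde{x})}\ff(x,Du)\,\dx>0,
\]
which amounts to a uniform lower density bound for the $\ff$-energy at $\tilde{x}$. A Vitali covering argument centered at points of $\tilde{\Sigma}$, combined with the higher integrability just obtained and Hölder's inequality, then shows that the generalized Hausdorff pre-measure $\mathscr{H}_{\ff^{1+\delta}}(\tilde{\Sigma})$ vanishes, provided $q(1+\delta)\le n$ ensures that the relevant covering sums converge. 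The bounds \eqref{2.2} and \eqref{2.3} then follow by comparing gauge functions: $\ff(x,z)^{1+\delta}\ge|z|^{p(1+\delta)}$ on all of $\Omega$ yields \eqref{2.2}, while $\ff(x,z)^{1+\delta}\gtrsim a(x)^{1+\delta}|z|^{q(1+\delta)}$ on $\{a>0\}$ yields \eqref{2.3}, in both cases by monotonicity of Hausdorff measures with respect to the integrand.

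The main technical obstacle is the Caccioppoli inequality of the first step: one must accommodate both the manifold-valued constraint---requiring the connectedness assumption \eqref{M} to apply Lemma~\ref{extensiontheorem} uniformly across scales---and the Riemannian background metric, and ensure that the resulting reverse Hölder constant depends only on $\tx{data}$ so that Gehring's lemma produces a $\delta$ of the same type. Once this is set up, the measure-theoretic arguments in the second step proceed essentially as in the Euclidean case of Theorem~\ref{theorem2}, with only cosmetic modifications to account for the Riemannian volume element.
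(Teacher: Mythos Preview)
Your proposal is correct and essentially matches the paper: the paper's proof is one line --- introduce local coordinates on $\Omega$ to reduce to Theorem~\ref{theorem2} --- and your outline simply unpacks the ingredients of that result (Caccioppoli via Lemma~\ref{extensiontheorem} and Gehring for \eqref{suH}, then the Vitali-type covering argument of \cite[Theorem~3]{DFM} for \eqref{2.23}). The only cosmetic difference is that you deduce \eqref{2.2}--\eqref{2.3} from \eqref{2.23} by gauge comparison, whereas the paper pulls them directly from \eqref{hp} and \eqref{hq} via \cite[Proposition~2.7]{giusti}; both routes are valid.
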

\noindent
The same conclusions apply when $u$ is a local minimizer of $\mathcal{H}$ in \eqref{functional} with values constrained to a manifold $\mm$ as specified in condition~\ref{item:c}.\\

\noindent The paper is organized as follows. In Section \ref{sec2}, we introduce the notation and collect all preliminary material needed for the main argument. This includes the relevant definitions and several technical lemmas that will be instrumental in the proof of the main result. Section \ref{sec3} contains the complete proof of the partial regularity theorem. In addition to deriving the regularity properties of the minimizer, we also describe the structure of the singular set, see Remark \ref{rm3.1}. Finally, in Section \ref{sec4}, we improve the estimate on the Hausdorff measure of the singular set by building on the results established in the previous section.

\section{Preliminaries} \label{sec2}
\subsection{Notation}
\noindent
In this section, we introduce the notation used throughout the paper. Given $\tilde{x} \in \mathds{R}^n$ and $r > 0$, we define the open ball
\[
\tx{B}_r \equiv \tx{B}_r(\tilde{x}) := \{ x \in \mathds{R}^n : |x - \tilde{x}| < r \}.
\]
Unless otherwise specified, all balls considered will be centered at the same point. The symbol $c$ will denote a generic positive constant, not necessarily the same at each occurrence; its relevant dependencies will be explicitly indicated when needed. In the forthcoming estimates, any dependence of the constants on quantities related to the geometry of $\overline{\mm}$ (or $\mm$ in case \ref{item:c}) such as the $L^\infty$-norm of maps with range in $\overline{\mm}$  will be simply denoted by writing $c \equiv c(\overline{\mm})$. Note that the $L^\infty$-norm is finite due to the compactness of $\overline{\mm}$. If $U \subset \R^n$ is a measurable subset with bounded positive measure $0<|U|<\infty$ and $w$ is a measurable map, we set
$$(w)_U \equiv \mint_{U} w(x) \, {\rm d}x := \frac{1}{|U|} \int_U w(x) \, {\rm d}x.$$ 
As usual, with $\gamma \in (0,1)$, we denote
$$ [w]_{0,\gamma,U} := \sup_{x,y \in U, x \neq y}\frac{|w(x) - w(y)|}{|x-y|^\gamma}.$$
For the coefficient $a(\cdot)$ we abbreviate with $[a]_{0,\alpha} \equiv [a]_{0,\alpha,\Omega}$. 
We consider the auxiliary vector fields $V_p, V_q: \R^{N\times n}\to \R^{N\times n}$ defined by
\begin{equation} \label{V1}
    V_{t}(z) := |z|^{\frac{t-2}{2}}z, \quad t\in \{p,q\},
\end{equation}
whenever $z \in \R^{N\times n}$. To this extent, we collect some useful notations and properties, which will be needed in the proof of Theorem \ref{maintheorem}. For every $z_1, z_2 \in \R^{N \times n}$, we have
\begin{equation} \label{v}
    |V_{t}(z_{1})-V_{t}(z_{2})|\approx (|{z_{1}|}+|z_{2}|)^{(t-2)/2}|z_{1}-z_{2}|, \quad t\in \{p,q\},
\end{equation}
where the equivalence holds up to constants depending only on $n,N,t$, see \cite[Lemma 2.1]{Hamburger}. Two quantities that will play a crucial role throughout the paper are
\begin{align*}
    \mathcal{V}^{2}(x,z_1,z_2) := |V_{p}(z_1) - V_{p}(z_2)|^2 & + a(x)|V_{q}(z_1) - V_{q}(z_2)|^2,
\end{align*}
and
\begin{align*} 
        \mathcal{V}^{2}(z_1,z_2, {\tx{B}}_r) := |V_{p}(z_1) - V_{p}(z_2)|^2 & + \inf_{x \in \tx{B}_r}a(x)|V_{q}(z_1) - V_{q}(z_2)|^2,
\end{align*}
where ${\tx{B}}_r \subset \Omega$ is a ball. We report an important inequality (see \cite{BCM1,KM}) related to these fields. Let $\tilde{\tx{g}}$ be the function defined in \eqref{ng}. Then, for all $z_1,z_2 \in \R^{N\times n}$ and $x \in \Omega$, it follows
\eqn{V}
$$ \mathcal{V}^2(x,z_1,z_2) \leq c [\partial_z \tilde{\tx{g}}(x,z_1) -\partial_z \tilde{\tx{g}}(x,z_2)]\cdot (z_1 -z_2),$$
with $c\equiv c(n,N,\ell,p,q)$.
Let us now give the definitions of manifold $j$-connected.
\begin{definition}[Manifold $j$-connected]\label{j connected}
    Given an integer $j \geq 0$, a manifold $\tilde{\mm}$ is said to be $j$-connected if its
first $j$ homotopy groups vanish identically, that is $\pi_0(\tilde{\mm}) = \pi_1(\tilde{\mm}) = \dots= \pi_{j-1}(\tilde{\mm})=\pi_j(\tilde{\mm})= 0$.
\end{definition}
\noindent
The last definition we need is the one of a local minimizer of functional $\mathcal{H}$ as in \eqref{functional} with value in a compact target space $\tilde{\mm}$. For functional $\mathcal{G}$, with coefficients $g,B$ satisfying \eqref{ellipticity}, the definition does not change.
\begin{definition}[Constrained local minimizer]\label{defmin}
    A function $u \in W^{1,1}_{\loc}(\Omega, \tilde{\mm})$, is a local minimizer of the functional $\mathcal{H}$ in \eqref{functional}, if and only if $\tx{H}(\cdot, Du) \in L^1_{\rm{loc}}(\Omega)$
    and the minimality condition
    \begin{equation}\label{minimality condition}
       \mathcal{H}(u, \supp(u-v))\leq   \mathcal{H}(v, \supp(u-v))
    \end{equation}
    holds for all functions $v\in W^{1,1}_{\loc}(\Omega, \tilde{\mm})$ such that $\rm{supp}$$(u-v)\subset \Omega$. 
    \end{definition}
\noindent
 Before concluding this section, for $\tilde{\mm}$ compact submanifold, we introduce the spaces:
$$ W^{1,\tx{H}}(\Omega,\R^N):= \left \{ u \in W^{1,1}(\Omega,\R^N): \tx{H}(\cdot, u)+\tx{H}(\cdot, Du) \in L^{1}(\Omega) \right \},$$
$$ W^{1,\tx{H}}(\Omega,\tilde{\mm}):= \left \{ u \in W^{1,\tx{H}}(\Omega,\R^N):  \text{Im}(u) \subset \tilde{\mm} \text{ holds a.e.} \right \}.$$
\noindent
\subsection{Properties of the frozen double phase integrand} \label{properties}
We introduce the following Young function
\begin{equation}\label{frozen funct}
    \ff^-_{\tx{B}_{\rr}}(t) := t^p + \inf_{x \in \tx{B}_{\rr}} a(x)t^q,
\end{equation}
where $\tx{B}_{\rr} \Subset \Omega$ is a ball. We summarize some key properties of the functions 
$\ff^-_{\tx{B}_{\rr}}(t)$. First, observe that $\tx{H}^{-}_{\tx{B}_{\varrho}}(\cdot)$ is a Young function satisfying the $\Delta_2$-condition. Since $t \mapsto \tx{H}^{-}_{\tx{B}_{\rr}}(t)$ is strictly increasing and strictly convex, its inverse $(\tx{H}^{-}_{\tx{B}_{\varrho}})^{-1}$ is strictly increasing and strictly concave with $(\tx{H}^{-}_{\tx{B}_{\rr}})^{-1}(0) = 0$, making $(\tx{H}^{-}_{\tx{B}_{\rr}})^{-1}$ subadditive. Therefore, we observe that 
\eqn{lambdaf}
$$ (\ff^{-}_{\tx{B}_{\rr}})^{-1}(\lambda t) \leq (\lambda+1)(\ff^{-}_{\tx{B}_{\rr}})^{-1}(t), \quad \text{for all } \lambda \geq 0. $$
Furthermore, for any ball $\tx{B}_{\rr} \equiv \tx{B}_{\rr}(\tilde{x}) \Subset \Omega$, by the H\"older continuity of $a(\cdot)$, the function 
\eqn{contf}
$$(\tilde{x}, \varrho, t) \mapsto \tx{H}^{-}_{\tx{B}_{\varrho}(\tilde{x})}(t) \text{ is continuous on } \Omega \times [0, \infty) \times [0, \infty).$$ To conclude,  we observe that for a fixed $\tilde{x}$, if $\varrho_1 \leq \varrho_2$, it follows that 
\eqn{suiraggi}
$$\tx{H}^{-}_{\tx{B}_{\varrho_2}(\tilde{x})}(t) \leq \tx{H}^{-}_{\tx{B}_{\varrho_1}(\tilde{x})}(t)$$ holds uniformly in $t \geq 0$.
\subsection{Useful results}
In this section, we collect some auxiliary results used throughout the paper. We start with the following lemma that highlights good properties in terms of retractions for manifolds endowed with a relatively simple topology.
\begin{lemma} \label{lem1}
Let $\tilde{\mm} \subset \mathbb{R}^N$, $N \geq 3$, be a compact $l$-connected submanifold, for some integer $1 \leq l \leq N-2$, contained in an $N$-dimensional cube $Q$. Then there exists a closed $(N - l - 2)$-dimensional Lipschitz polyhedron $X \subset Q \setminus \tilde{\mm}$ and a locally Lipschitz retraction $\Psi : Q \setminus X \to \tilde{\mm}$ such that
\[
|D\Psi(x)| \leq \frac{c}{\mathrm{dist}(x, X)}, \quad \text{for all } x \in Q \setminus X,
\]
for some positive constant $c\equiv c(N,l,\tilde{\mm})$.
    
\end{lemma}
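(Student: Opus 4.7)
The plan is to follow the classical skeleton-by-skeleton retraction construction of Bethuel and Hardt--Lin for manifold valued Sobolev maps. Since $\tilde{\mm}$ is a compact smooth submanifold of $\R^N$, there exists $\varepsilon_0>0$ such that the nearest-point projection $\pi_{\tilde{\mm}}\colon U_{\varepsilon_0}(\tilde{\mm})\to \tilde{\mm}$ is well defined and of class $C^1$ on the tubular neighbourhood $U_{\varepsilon_0}(\tilde{\mm})$, with $|D\pi_{\tilde{\mm}}|\leq c(N,\tilde{\mm})$. I fix a cubical decomposition $\mathcal{K}_\eta$ of $Q$ with mesh $\eta\ll \varepsilon_0/\sqrt{N}$, so that every cube meeting $\tilde{\mm}$ lies inside $U_{\varepsilon_0}(\tilde{\mm})$. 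Denote by $\mathcal{K}^{\mathrm{good}}_\eta$ and $\mathcal{K}^{\mathrm{bad}}_\eta$ the subcomplexes of cubes that do, respectively do not, meet the $\varepsilon_0/2$-tubular neighbourhood of $\tilde{\mm}$, and set $\Psi:=\pi_{\tilde{\mm}}$ on $|\mathcal{K}^{\mathrm{good}}_\eta|$.

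The core of the proof is to extend $\Psi$ continuously and with controlled local Lipschitz constant to the bad region. I would work inductively on the dimension of the skeleta of $\mathcal{K}^{\mathrm{bad}}_\eta$. Suppose $\Psi$ is already defined and locally Lipschitz on $|\mathcal{K}^{\mathrm{good}}_\eta|$ together with the $k$-skeleton of $\mathcal{K}^{\mathrm{bad}}_\eta$; to extend across a $(k+1)$-cube $\sigma\in\mathcal{K}^{\mathrm{bad}}_\eta$ I restrict to $\partial\sigma$, which carries a Lipschitz map into $\tilde{\mm}$, i.e.\ a continuous map $S^k\to \tilde{\mm}$. If $k\leq l$, the hypothesis $\pi_k(\tilde{\mm})=0$ provides a continuous extension to $\sigma$, which can be upgraded to a Lipschitz one with constants depending only on $\tilde{\mm}$ and $\eta$ by standard approximation arguments for maps into a compact manifold. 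If instead $k\geq l+1$, a continuous extension through the centre $c_\sigma$ of $\sigma$ may fail, so I declare $c_\sigma$ to belong to the singular set $X$ and define $\Psi$ on $\sigma\setminus\{c_\sigma\}$ by radial (degree zero) projection from $c_\sigma$ onto $\partial\sigma$ composed with the map already defined on $\partial\sigma$. Iterating, the collection of centres of cells of dimension $\geq l+2$, together with the radial rays used to connect them through higher-dimensional cells, assembles into a closed $(N-l-2)$-dimensional Lipschitz polyhedron $X\subset Q\setminus \tilde{\mm}$, since the $\varepsilon_0/2$ separation between $\mathcal{K}^{\mathrm{bad}}_\eta$ and $\tilde{\mm}$ guarantees $X\cap \tilde{\mm}=\emptyset$.

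The last step is the quantitative estimate $|D\Psi(x)|\leq c/\mathrm{dist}(x,X)$. Away from $X$, in the good region and on cells where the extension succeeded without singularities, $\Psi$ is Lipschitz with constant depending only on $\eta$ and $\tilde{\mm}$, and $\mathrm{dist}(x,X)$ is bounded below by a positive constant depending on these same data, so the estimate follows trivially. Near a bad cell $\sigma$ on which the radial extension from $c_\sigma$ is used, the chain rule and the explicit form of the radial projection give $|D\Psi(x)|\leq c\,|x-c_\sigma|^{-1}$, and since $c_\sigma\in X$ this yields the claimed bound $|D\Psi(x)|\leq c\,\mathrm{dist}(x,X)^{-1}$. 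The main technical obstacle I expect is the compatibility of the extensions across shared faces of adjacent cells: the Lipschitz extensions supplied by $l$-connectedness on a $(k+1)$-cell must agree on its boundary with the already-fixed map on the $k$-skeleton coming from neighbouring cells, and the radial extensions must glue to a globally Lipschitz map on $Q\setminus X$. This is precisely what forces the inductive bookkeeping above and the specific choice of mesh, and is also what ensures the polyhedron $X$ has the prescribed dimension $N-l-2$.
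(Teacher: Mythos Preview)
The paper does not give an argument here: its proof of Lemma~\ref{lem1} consists entirely of the citations ``The result follows from \cite[Lemma 6.1]{HL}\dots Another proof\dots can be found in \cite[Lemma 4.5]{H}.'' Your sketch is precisely an outline of the Hardt--Lin construction being cited, so in substance you are not doing something different from the paper but rather unpacking the black box the paper invokes. The skeleton-by-skeleton strategy---nearest-point projection on a tubular neighbourhood, Lipschitz extension over cells of dimension $\leq l+1$ via $\pi_k(\tilde{\mm})=0$ for $k\leq l$, and radial projection from centres of higher-dimensional cells---is exactly the mechanism behind \cite[Lemma 6.1]{HL}, and the singular set you assemble is the dual $(N-l-2)$-skeleton of the cubical complex.

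Your sketch is essentially correct in spirit, with one point worth tightening: the description of $X$ as ``centres of cells of dimension $\geq l+2$ together with the radial rays used to connect them'' is informal. What actually happens is that the iterated radial projections from an $N$-cell down to the $(l+1)$-skeleton have as their combined singular locus the dual $(N-l-2)$-skeleton, i.e.\ the union of the $(N-j)$-dimensional dual cells of all $j$-cells with $j\geq l+2$; this is what gives the dimension count cleanly and makes $X$ a genuine Lipschitz polyhedron rather than an ad hoc collection of points and rays. The gradient bound $|D\Psi|\lesssim \mathrm{dist}(\cdot,X)^{-1}$ then follows because each radial projection from a $j$-cell to its boundary has gradient $\lesssim |x-c_\sigma|^{-1}$ and the composition telescopes, while the final Lipschitz extension over the $(l+1)$-skeleton contributes only a bounded factor. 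The compatibility issue you flag across shared faces is real and is exactly why one must work inductively on the full skeleta of the complex (so that extensions on adjacent cells are forced to agree on common boundaries) rather than cell by cell in isolation.
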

\begin{proof}
The result follows from \cite[Lemma 6.1]{HL}, which provides the original construction. Another proof based on Lipschitz extension properties for maps between Riemannian manifolds can be found in \cite[Lemma 4.5]{H}.

\end{proof}

\noindent A key difficulty in constrained variational problems lies in constructing admissible comparison maps, since standard convex combinations of a minimizer with suitable cut-off functions fail to preserve the constraint. To overcome this, we apply Lemma \ref{extensiontheorem}, which provides local energy control for a suitable projection $\tilde{w}$ of a competitor $w$. This is the content of the next lemma.
\begin{lemma}[Finite energy extension]\label{extensiontheorem}
Let $\hat{\Omega}\subset\Omega$ be an open bounded subset of $\mathbb{R}^n$ with Lipschitz boundary. Let $\tilde{\mm} \subset \mathbb{R}^N$, $N \geq 3$, be a compact $[q]$-connected submanifold, for $1<p<q<N-1$. Let $w \in (W^{1,\tx{H}} \cap L^{\infty})( \hat{\Omega}, \mathbb{R}^N)$ be such that $w(\partial \hat{\Omega})\subset \tilde{\mm}$, then there exists $\tilde{w} \in W^{1,\tx{H}}( \hat{\Omega}, \tilde{\mm})$ satisfying
\begin{equation}\label{LunaNotas}
\int_{ \hat{\Omega}} \tx{H}(x, D\tilde{w})\,\dd x \leq c \int_{ \hat{\Omega}} \tx{H}(x, Dw)\,\dd x,
\end{equation}
and $\tilde{w}-w\in W^{1,1}_0( \hat{\Omega}, \mathbb{R}^N)$, where $c\equiv c(n,N,p,q,\tilde{\mm})$.
\end{lemma}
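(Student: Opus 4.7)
The plan is to build $\tilde w$ as a retract of a small perturbation of $w$, in the spirit of Hardt--Lin. First I would apply Lemma~\ref{lem1} with $l=[q]$ (admissible since $1<q<N-1$ gives $1\le[q]\le N-2$) to obtain a Lipschitz polyhedron $X\subset Q\setminus\tilde{\mm}$ of dimension $N-[q]-2$ and a locally Lipschitz retraction $\Psi:Q\setminus X\to\tilde{\mm}$ with $|D\Psi(z)|\le c/\dista(z,X)$. Since $w\in L^\infty$, after enlarging $Q$ we may assume $w(\hat\Omega)\subset Q$.

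For $\e>0$ small and $y\in B_\e(0)\subset\R^N$, I would consider the candidate $\tilde w_y(x):=\Psi(w(x)+y)$, which takes values in $\tilde{\mm}$ provided $w(x)+y\notin X$ and for which the chain rule gives $|D\tilde w_y|\le c\,|Dw|/\dista(w+y,X)$. Averaging in $y$ and applying Fubini with the change of variable $z=w(x)+y$ at each fixed $x$, one obtains
\[
\mint_{B_\e(0)}\!\int_{\hat\Omega}\tx{H}(x,D\tilde w_y)\,\dx\,\dy\le c\int_{\hat\Omega}\!\left[\mint_{B_\e(w(x))}\!\left(\frac{|Dw|^p}{\dista(z,X)^p}+a(x)\frac{|Dw|^q}{\dista(z,X)^q}\right)\dz\right]\dx.
\]
The codimension of $X$ in $\R^N$ equals $[q]+2$, and $p<q<[q]+2$ (the latter automatic from $q-[q]<1$), so $\dista(\cdot,X)^{-t}$ is uniformly locally integrable for $t\in\{p,q\}$; hence the inner bracket is bounded by an absolute constant. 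A Fubini--pigeonhole argument then yields a ``good'' translation $y_0\in B_\e(0)$ satisfying $\int_{\hat\Omega}\tx{H}(x,D\tilde w_{y_0})\,\dx\le c\int_{\hat\Omega}\tx{H}(x,Dw)\,\dx$.

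It remains to enforce the trace condition $\tilde w-w\in W^{1,1}_0(\hat\Omega,\R^N)$, which is the main technical obstacle because $\tilde w_{y_0}$ need not equal $w$ on $\partial\hat\Omega$. I would replace the uniform translation by a cutoff translation: pick $\eta\in C^\infty_c(\hat\Omega)$ with $0\le\eta\le 1$ and set $\tilde w(x):=\Psi\bigl(w(x)+\eta(x)\,y_0\bigr)$; since $\Psi|_{\tilde{\mm}}=\Id$ and $\eta=0$ on $\partial\hat\Omega$ where $w\in\tilde{\mm}$ in the trace sense, we recover $\tilde w=w$ on $\partial\hat\Omega$. The hard step is absorbing the extra term $y_0\otimes D\eta$ in $D\tilde w$, because a naive estimate produces a non-integrable blow-up $1/\eta^t$ near $\partial\hat\Omega$. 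A clean way out is to exploit the Lipschitz regularity of $\partial\hat\Omega$ to pre-extend $w$ to a slightly larger domain $\hat\Omega^+\supset\hat\Omega$ with $\int_{\hat\Omega^+}\tx{H}(x,Dw)\,\dx\le c\int_{\hat\Omega}\tx{H}(x,Dw)\,\dx$, to run the translation-retraction argument on $\hat\Omega^+$, and to take $\eta$ supported in a thin collar of $\partial\hat\Omega$; the additional contribution is then controlled by the energy of $w$ on that collar, yielding the desired bound~\eqref{LunaNotas}.
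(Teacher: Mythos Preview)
Your retraction-and-averaging scheme is exactly the paper's approach: apply Lemma~\ref{lem1} with $l=[q]$, translate $w$ by $y\in B_\e^N$, project via the retraction, average over $y$ using Fubini and the local integrability of $\dista(\cdot,X)^{-t}$ for $t\in\{p,q\}<[q]+2$, and extract a good $y_0$ by Markov. All of this is correct.

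The gap is in the boundary step. Once $y_0$ is fixed, replacing it by $\eta(x)y_0$ kills the distance control you just won: on the set $\{0<\eta<1\}$ the point $w(x)+\eta(x)y_0$ sweeps the segment from $w(x)$ to $w(x)+y_0$ and can hit $X$, so $\dista(w+\eta y_0,X)^{-1}$ is entirely uncontrolled there. Averaging with the cutoff already in place does not help either: the change of variables $z=w+\eta y$ has Jacobian $\eta^N$, and near $X$ the integral $\int_{B_{\eta\e}}\dista(z,X)^{-t}\,\dz$ scales like $(\eta\e)^{N-t}$, producing precisely the $\eta^{-t}$ blow-up you flag. Your extension to $\hat\Omega^+$ does not cure this, because the collar $\{0<\eta<1\}$ still lies inside the domain and $w$ has no reason to stay away from $X$ there; the trace condition $w|_{\partial\hat\Omega}\in\tilde\mm$ gives nothing pointwise on an interior collar.

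The paper's fix avoids cutoffs altogether. It applies Lemma~\ref{lem1} not to $\tilde\mm$ but to a closed tubular neighbourhood $\overline V\supset\tilde\mm$ (which inherits $[q]$-connectedness by homotopy equivalence), then composes the resulting retraction $\Psi:Q\setminus X\to\overline V$ with the nearest-point projection $\Pi:\overline V\to\tilde\mm$ to get $P:=\Pi\circ\Psi$. The point of the tubular neighbourhood is that for $t\in\tilde\mm$ and small $\tilde b$ one has $t-\tilde b\in V$, so $\Psi(t-\tilde b)=t-\tilde b$ and hence $P_{\tilde b}|_{\tilde\mm}(t)=\Pi(t-\tilde b)$, a bi-Lipschitz diffeomorphism of $\tilde\mm$. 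Setting $\tilde w:=(P_{\tilde b}|_{\tilde\mm})^{-1}\circ P_{\tilde b}\circ w$ then gives $\tilde w=w$ on $\partial\hat\Omega$ by construction, and post-composition by the fixed Lipschitz map $(P_{\tilde b}|_{\tilde\mm})^{-1}$ preserves the energy bound. This post-composition trick is the missing idea.
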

\noindent
Before proceeding with the proof, let us introduce some notation. For any subset $A \subset \mathbb{R}^N$, we define \( \text{Unp}(A) \) as the set of all points \( x \in \mathbb{R}^N \) for which there exists a unique nearest point to $x$ in \( A \). For any \( t \in A \), let \( \text{reach}(A, t) \) denote the supremum of the set of all \( r > 0 \) such that the ball \( \{ x \in \mathbb{R}^N : |x - t| < r \} \) is entirely contained in \( \text{Unp}(A) \). We then define
\[
\text{reach}(A) := \inf_{t \in {A}}\text{reach}(A, t).
\]
We are now ready to prove Lemma \ref{extensiontheorem}, following the approach in \cite[Lemma 4.7]{H}.
\begin{proof}[Proof of lemma \ref{extensiontheorem}]
Let
$ V:= \{t \in \R^N: \text{dist}(t,\tilde{\mm}) < \sigma \}$, for some $0<\sigma < \text{reach}(\tilde{\mm})$, 
be a neighbourhood with the unique nearest point property.
Let \( \Pi : \overline{V} \to \tilde{\mm} \) denote the map that associates to each $t \in \overline{V}$ the unique $t_0 \in \tilde{\mm}$ such that $\text{dist}(t,\tilde{\mm})=|t-t_0|$. Then, by \cite[Theorem 2.4]{H}, function $\Pi$ is Lipschitz continuous. We observe that $\overline{V}$ and $\tilde{\mm}$ are homotopy equivalent spaces, since there exists a strong deformation retraction $h_\lambda(\xi)=(1-\lambda)\xi + \lambda\Pi(\xi)$, for $\xi \in \overline{V}$ and $0\leq \lambda\leq  1$. Therefore $\pi_j(\overline{V})=\pi_j(\tilde{M})=0$ for all $j =1,\dots,[q]$, see \cite[Proposition 1.17]{ha}. Now, since \( \tilde{\mm} \) is compact and \( w \) is bounded, there exists an \( N \)-dimensional cube \( Q \subset \mathbb{R}^N \) such that
\[
\tilde{\mm} \subset \overline{V} \subset Q \quad \text{and} \quad \text{dist}(w,\tilde{\mm}) \leq \frac{1}{2}\text{dist}(\tilde{\mm},\partial Q) \text{  almost everywhere}. 
\]
By Lemma \ref{lem1}, with \( l = [q] \), there exists a locally Lipschitz retraction
\[
\Psi : Q \setminus X \to \overline{V},
\]
where \( X \subset Q \setminus \overline{V} \) is a Lipschitz polyhedron of $(N - [q]-2)$-dimension, and is constructed to remain strictly away from \( \tilde{\mm} \). Define now the map \( P := \Pi \circ \Psi: Q \setminus X \to \tilde{\mm} \) then, by Lemma \ref{lem1}, it holds
\begin{equation}\label{(3.4)Notes}
|\nabla P(t)| \leq \frac{c}{\text{dist}(t, X)},
\end{equation}
with \( c \equiv c(N, \tilde{\mm}) \). By a change of variables, the definition of the dual skeleton, the fact that \( \dim(X) \leq N - [q] - 2 \) and $p,q < [q]+1$, we obtain
\begin{equation}\label{(3.5)Notes}
\int_Q \frac{\dt}{\text{dist}(t, X)^\tx{s}} < c, \ \text{for } \tx{s}={p,q},
\end{equation}
for some finite positive constant \( c = c(N, \tilde{\mm}, q) \). Now take \( 0 < \rr < \min\left\{ \frac{\sigma}{2}, \frac{\text{dist}(\tilde{\mm}, \partial Q)}{2} \right\} \), and for each \( b \in \tx{B}_\rr^N \) (the ball in $\R^N$ of radius $\rr$ and centered at $0$), define the translated sets
\[
Q_b := \{ t + b : t \in Q \}, \quad X_b := \{ t + b : t \in X \},
\]
as well as the translated retraction
\[
P_b : Q_b \setminus X_b \to \tilde{\mm}, \quad P_b(t) := P(t - b).
\]
Then, using the chain rule, Fubini’s theorem, and estimates \eqref{(3.4)Notes} and \eqref{(3.5)Notes}, we obtain
\begin{align}\label{(3.6)Notes}
\int_{\tx{B}_{\rr}^N} \int_{ \hat{\Omega}} \tx{H}(x, D(P_b(w)))\, \mathrm{d}x\, \mathrm{d}b 
&= \int_{\tx{B}_{\rr}^N} \int_{ \hat{\Omega}} \left[ |D(P_b(w))|^p + a(x)|D(P_b(w))|^q \right] \mathrm{d}x\, \mathrm{d}b \nonumber \\
&\leq \int_{ \hat{\Omega}} \left[ |Dw|^p \left( \int_{\tx{B}_\rr^N} |\nabla P(w - b)|^p \mathrm{d}b \right) + a(x) |Dw|^q \left( \int_{\tx{B}_\rr^N} |\nabla P(w - b)|^q \mathrm{d}b \right) \right] \mathrm{d}x \nonumber \\
&\leq \int_{ \hat{\Omega}} |Dw|^p \left( \int_Q |\nabla P(t)|^p \mathrm{d}t \right) \mathrm{d}x 
+ \int_{ \hat{\Omega}} a(x)|Dw|^q \left( \int_Q |\nabla P(t)|^q \mathrm{d}t \right) \mathrm{d}x \nonumber \\
&\leq c \int_{ \hat{\Omega}} \left( |Dw|^p + a(x) |Dw|^q \right) \mathrm{d}x 
= c \int_{ \hat{\Omega}} \tx{H}(x, Dw)\, \mathrm{d}x,
\end{align}
for some constant \( c = c(N, \tilde{\mm}, q) \). From \eqref{(3.6)Notes} and Markov’s inequality, there exists \( \tilde{b} \in \tx{B}_\rr^N \) and a constant \( c = c(n,N, \tilde{\mm}, q) \) such that
\begin{align}\label{(3.7)Notes}
\int_{ \hat{\Omega}} \tx{H}(x, D(P_{\tilde{b}}(w)))\, \mathrm{d}x 
\leq c \int_{ \hat{\Omega}} \tx{H}(x, Dw)\, \mathrm{d}x.
\end{align}

\noindent Since \( w(\partial  \hat{\Omega}) \subset \tilde{\mm} \), the map \( \tilde{w} := (P_{\tilde{b}}|_{\tilde{\mm}})^{-1}\circ P_{\tilde{b}} \circ w \) is well-defined. Moreover, since the inverse map \( (P_{\tilde{b}}|_{\tilde{\mm}})^{-1} \) is Lipschitz on \( \tilde{\mm} \), inequality~\eqref{(3.7)Notes} implies
\[
\int_{ \hat{\Omega}} \tx{H}(x, D\tilde{w})\, \mathrm{d}x \leq c \int_{ \hat{\Omega}} \tx{H}(x, Dw)\, \mathrm{d}x,
\]
with \( c = c(n,N, \tilde{\mm}, p, q) \), which shows that \eqref{LunaNotas} holds.  Finally, since \( w(\partial  \hat{\Omega}) \subset \tilde{\mm} \), it follows by construction that \( \tilde{w}|_{\partial  \hat{\Omega}} = w|_{\partial  \hat{\Omega}} \), completing the proof.

\end{proof}
\noindent
In the following lemma, we present a Caccioppoli-type inequality for a constrained local minimizer of the functional $\mathcal{G}$ in \eqref{functionalG}, with $\mm$ as in \ref{item:a}. The proof makes use of extension Lemma \ref{extensiontheorem}. Note that the results  still hold in the unconstrained case, i.e. $u \in (W^{1,\tx{H}}\cap L^{\infty}) (\Omega, \R^N)$. In this case, the constants appearing in the lemma will also depend on $\nr{u}_{L^{\infty}}$.
\begin{lemma}[Caccioppoli's inequality]\label{CaccioppoliLemma}
  Let $\Omega$ is a  bounded open subset of $\R^n$ and let $u \in W^{1,\tx{H}}(\Omega, \overline{\mm})$ be a constrained local minimizer of the functional $\mathcal{G}$ in \eqref{functionalG} under the assumptions $1<p<q \leq p + \alpha$, $q<N-1$, and \eqref{ellipticity}, with $\mm$ as in \ref{item:a} and  satisfying \eqref{M}. Then, there exists a constant $c \equiv c(n,N,\ell,L,p,q,\overline{\mm}) > 0$ such that for any pair of concentric balls $\tx{B}_r \subset \tx{B}_R \Subset \Omega$, it holds that
\begin{equation}\label{asteriscoNotes}
    \int_{\tx{B}_r} \tx{H}(x,Du) \, \dd x \leq c \int_{\tx{B}_R} \tx{H} \left( x, \frac{u - (u)_{\tx{B}_R}}{R - r} \right) \, \dd x.
\end{equation}
Moreover, if $R \leq 1$, then
\eqn{caccioppoli2}
$$
\int_{\tx{B}_{R/2}} \tx{H}(x,Du) \, dx \leq c \int_{\tx{B}_R} \tx{H}^-_{\tx{B}_R} \left( \frac{u - (u)_{\tx{B}_R}}{R} \right) \, \dx,
$$
for  $c \equiv c(n,N,\ell,L,p,q,[a]_{0,\alpha},\overline{\mm}) > 0$. Moreover, if
\begin{equation}\label{doubleasteriscNotes}
\inf_{\tx{B}_R}a(x) \leq 4[a]_{0,\alpha}R^\alpha
\end{equation}
holds and $R \leq 1$, then
\eqn{caccioppoli3}
$$\int_{\tx{B}_{R/2}} \tx{H}(x,Du) \, \dd x \leq c \int_{\tx{B}_R} \left| \frac{u-(u)_{\tx{B}_R}}{R}\right |^p \, \dd x,$$
with $c\equiv c(n,N,\ell,L,p,q,[a]_{0,\alpha},\overline{\mm})$.
\end{lemma}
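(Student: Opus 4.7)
The strategy is a standard Caccioppoli hole-filling argument, with the crucial twist that the usual affine competitor no longer takes values in $\overline{\mm}$ and must be replaced by an admissible one via the finite energy extension Lemma \ref{extensiontheorem}. Concretely, I would fix concentric radii $r \leq s_1 < s_2 \leq R$ together with a cut-off $\eta \in C^\infty_c(\tx{B}_{s_2})$ with $\eta \equiv 1$ on $\tx{B}_{s_1}$, $0 \leq \eta \leq 1$, $|D\eta| \leq c/(s_2-s_1)$, and set $w := u - \eta(u - (u)_{\tx{B}_R})$. This $w$ belongs to $(W^{1,\tx{H}} \cap L^{\infty})(\tx{B}_{s_2}, \R^N)$, since $\overline{\mm}$ is compact, and agrees with $u$ on $\partial \tx{B}_{s_2}$; in particular $w(\partial \tx{B}_{s_2}) \subset \overline{\mm}$. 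Therefore Lemma \ref{extensiontheorem}, applied on $\hat{\Omega} = \tx{B}_{s_2}$ with $\tilde{\mm} = \overline{\mm}$ (admissible thanks to hypothesis \eqref{M}), produces $\tilde{w} \in W^{1,\tx{H}}(\tx{B}_{s_2}, \overline{\mm})$ with $\tilde{w} - w \in W^{1,1}_0(\tx{B}_{s_2}, \R^N)$ and $\int_{\tx{B}_{s_2}} \tx{H}(x, D\tilde{w})\,\dd x \leq c \int_{\tx{B}_{s_2}} \tx{H}(x, Dw)\,\dd x$.

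Because $\supp(u - \tilde{w}) \subset \overline{\tx{B}_{s_2}} \Subset \Omega$ and $\tilde{w}$ is admissible (after extension by $u$ outside $\tx{B}_{s_2}$), the minimality condition \eqref{minimality condition} for $\mathcal{G}$ combined with the two-sided bound $c^{-1} \tx{H}(x, Du) \leq \tx{g}(x, u, Du) \leq c\, \tx{H}(x, Du)$, $c \equiv c(\ell, L, p, q)$, which follows at once from \eqref{ellipticity} when $u(x) \in \overline{\mm}$, yields $\int_{\tx{B}_{s_2}} \tx{H}(x, Du)\,\dd x \leq c \int_{\tx{B}_{s_2}} \tx{H}(x, Dw)\,\dd x$. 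Inserting $Dw = (1-\eta)Du - D\eta \otimes (u - (u)_{\tx{B}_R})$, using the elementary bound $\tx{H}(x, a+b) \leq c\,(\tx{H}(x,a) + \tx{H}(x,b))$, and observing that $1 - \eta$ vanishes on $\tx{B}_{s_1}$, one obtains
$$\int_{\tx{B}_{s_2}} \tx{H}(x, Du)\,\dd x \leq c \int_{\tx{B}_{s_2} \setminus \tx{B}_{s_1}} \tx{H}(x, Du)\,\dd x + c \int_{\tx{B}_R} \tx{H}\!\left(x, \frac{u - (u)_{\tx{B}_R}}{s_2 - s_1}\right) \dd x.$$
Adding $(c-1)\int_{\tx{B}_{s_1}} \tx{H}(x, Du)\,\dd x$ to both sides (the classical Widman trick) produces $\psi(s_1) \leq \theta\, \psi(s_2) + \Phi(s_1, s_2)$ with $\psi(t) := \int_{\tx{B}_t} \tx{H}(x, Du)\,\dd x$ and some $\theta \in (0,1)$, and the standard iteration lemma (see \cite{giusti}) applied on the pair $(r, R)$ converts this into \eqref{asteriscoNotes}.

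To deduce \eqref{caccioppoli2} I would specialize \eqref{asteriscoNotes} to $r = R/2$ and use the $\alpha$-Hölder estimate $a(x) \leq \inf_{\tx{B}_R} a + [a]_{0,\alpha} R^\alpha$. Since $u$ takes values in the compact set $\overline{\mm}$, we have $|u - (u)_{\tx{B}_R}| \leq c(\overline{\mm})$, hence $|u - (u)_{\tx{B}_R}|/R \leq c/R$; combined with $q - p < \alpha$ and $R \leq 1$ this yields the key absorption
$$[a]_{0,\alpha}\, R^\alpha \left|\frac{u - (u)_{\tx{B}_R}}{R}\right|^{q} \leq c\, [a]_{0,\alpha}\, R^{\alpha-(q-p)} \left|\frac{u - (u)_{\tx{B}_R}}{R}\right|^{p} \leq c \left|\frac{u - (u)_{\tx{B}_R}}{R}\right|^{p},$$
so that $\tx{H}(x,\cdot)$ applied to $(u - (u)_{\tx{B}_R})/R$ is dominated by $\tx{H}^-_{\tx{B}_R}(\cdot)$ of the same quantity, giving \eqref{caccioppoli2}. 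Under the further smallness assumption \eqref{doubleasteriscNotes} one has $a(x) \leq 5[a]_{0,\alpha} R^\alpha$ throughout $\tx{B}_R$, and the same arithmetic now absorbs the entire $q$-term of $\tx{H}^-_{\tx{B}_R}$ into the $p$-term, producing \eqref{caccioppoli3}.

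The only genuinely non-routine ingredient is the passage from $w$ to $\tilde{w}$: for a generic target $\overline{\mm}$ without convexity the naive cut-off competitor leaves the constraint, and assumption \eqref{M} (i.e.\ $[q]$-connectedness) is precisely the topological hypothesis Lemma \ref{extensiontheorem} requires in order to produce $\tilde{w}$ whose \emph{full} double phase energy, including the critical $q$-phase, is controlled by that of $w$. Once this manifold-preserving comparison is in place, the remainder is classical: Widman hole-filling plus the arithmetic afforded by $q < p + \alpha$ together with the $L^\infty$-bound coming from $u(x) \in \overline{\mm}$.
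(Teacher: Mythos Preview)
Your proof is correct and follows essentially the same approach as the paper: the same affine cut-off competitor $w = u - \eta(u-(u)_{\tx{B}_R})$, the same appeal to Lemma~\ref{extensiontheorem} to produce an $\overline{\mm}$-valued replacement $\tilde w$, the same comparison via minimality and the two-sided bound $\ell\,\tx{H}\le \tx{g}\le L\,\tx{H}$, followed by hole-filling and the iteration lemma; the derivations of \eqref{caccioppoli2} and \eqref{caccioppoli3} from \eqref{asteriscoNotes} via the H\"older bound on $a(\cdot)$ and the $L^\infty$-bound on $u$ are likewise identical in spirit. The only (harmless) discrepancies are cosmetic: the paper integrates the remainder term over $\tx{B}_s\setminus\tx{B}_t$ rather than $\tx{B}_R$, and for \eqref{caccioppoli3} it works directly with $\inf_{\tx{B}_R}a$ (already appearing in $\tx{H}^-_{\tx{B}_R}$) rather than bounding $a(x)$ pointwise, but these lead to the same conclusion.
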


\begin{proof}
Let \( r \leq t < s \leq R \). We take a cut-off function \( \eta \in C^\infty_c(\tx{B}_s) \) such that \( \chi_{\tx{B}_t} \leq \eta \leq \chi_{\tx{B}_s} \) and \( |D\eta| \leq 4(s - t)^{-1} \). Define
\[
w := u - \eta(u - (u)_{\tx{B}_R}).
\]
Since \( \eta \) is smooth and \( u \in W^{1,\tx{H}}(\tx{B}_s, \overline{\mm}) \), it follows that \( w \in W^{1,\tx{H}}(\tx{B}_s, \mathbb{R}^N) \) and \( u - w \in W^{1,1}_0(\tx{B}_s, \mathbb{R}^N) \).\\

\noindent
We observe that the choice \( \tilde{\mm} = \overline{\mm} \) in Lemma~\ref{extensiontheorem} is admissible, since the boundary \( \partial \mm \) is \( C^3 \)-regular. Then, there exists a function \( \tilde{w} \in W^{1,\tx{H}}(\tx{B}_s, \overline{\mm}) \), which is an admissible competitor for \( u \) in problem~\eqref{functional}, and satisfies~\eqref{LunaNotas} with \( \hat{\Omega} = \tx{B}_s \). Using the minimality of \( u \), $(\ref{ellipticity})_3$ and the fact that 
$ \|g_{\alpha\beta},B^{ij}\|_{L^\infty(\overline{\Omega}\times \overline{\mm})} \leq L$, we obtain:
\begin{align}\label{3.3DF}
    \int_{\tx{B}_s} \tx{H}(x, D u) \, \mathrm{d}x 
    &\leq c \int_{\tx{B}_s} \tx{g}(x,u, D u) \, \mathrm{d}x \nonumber \\
    &\leq c \int_{\tx{B}_s} \tx{g}(x,\tilde{w}, D \tilde{w}) \, \mathrm{d}x \nonumber \\
    &\leq c \int_{\tx{B}_s} \tx{H}(x, D \tilde{w}) \, \mathrm{d}x \nonumber \\
    &\leq c \int_{\tx{B}_s} \tx{H}(x, D u) \, \mathrm{d}x \nonumber \\
    &\leq c \left( \int_{\tx{B}_s \setminus \tx{B}_t} \tx{H}(x, D u) \, \mathrm{d}x + \int_{\tx{B}_s \setminus \tx{B}_t} \tx{H}\left(x, \frac{u - (u)_{\tx{B}_R}}{s - t}\right) \, \mathrm{d}x \right),
\end{align}
where \( c \equiv c(
N, \ell, L,p, q, \overline{\mm}) \).  
The proof of~\eqref{asteriscoNotes} is then completed by filling the hole and applying an iteration argument, see \cite{comi2}. \\

\noindent To establish~\eqref{caccioppoli2}, consider the estimate:
\begin{align*}
    \tx{H}\left(x, \frac{u - (u)_{\tx{B}_R}}{R}\right) 
    &= \left| \frac{u - (u)_{\tx{B}_R}}{R} \right|^p + a(x) \left| \frac{u - (u)_{\tx{B}_R}}{R} \right|^q \\
    &\leq \tx{H}^-_{\tx{B}_R}\left( \frac{u - (u)_{\tx{B}_R}}{R} \right) + \sup_{\tx{B}_R} \left[a(x) - \inf_{\tx{B}_R} a(x)\right] \left| \frac{u - (u)_{\tx{B}_R}}{R} \right|^q \\
    &\leq \tx{H}^-_{\tx{B}_R}\left( \frac{u - (u)_{\tx{B}_R}}{R} \right) + 2^{\alpha + q - p} [a]_{0,\alpha} R^{\alpha + p - q} \left| \frac{u - (u)_{\tx{B}_R}}{R} \right|^p \\
    &\leq c \, \tx{H}^-_{\tx{B}_R}\left( \frac{u - (u)_{\tx{B}_R}}{R} \right),
\end{align*}
with \( c \equiv c(p, q, \alpha,\overline{\mm}) \). Therefore,~\eqref{caccioppoli2} follows by applying~\eqref{asteriscoNotes} with \( r = R/2 \).\\

\noindent To prove~\eqref{caccioppoli3}, it suffices to use the estimate given by~\eqref{doubleasteriscNotes}:
\begin{align*}
    \inf_{x \in \tx{B}_R} a(x) \left| \frac{u - (u)_{\tx{B}_R}}{R} \right|^q 
    \leq 8 [a]_{0,\alpha} R^{\alpha + p - q} \left| \frac{u - (u)_{\tx{B}_R}}{R} \right|^p 
    \leq c \left| \frac{u - (u)_{\tx{B}_R}}{R} \right|^p,
\end{align*}
and then apply~\eqref{asteriscoNotes} once again.

\end{proof}
\noindent
We proceed with an intrinsic Sobolev-Poincaré inequality, the proof follows the one in \cite[Section 4]{comi2}, with minor modifications.
\begin{lemma}[Intrinsic Sobolev-Poincaré inequality]\label{IS-Pineq}
  Let $\Omega$ is a  bounded open subset of $\R^n$ and let $u \in (W^{1,\tx{H}} \cap L^{\infty})(\Omega, \mathbb{R}^N)$, $N \geq 1$, and let $\tx{B}_r \Subset \Omega$ be a ball with radius $r \leq 1$, assume that $q \leq p + \alpha$. Then, there exists $d \equiv d(n,p,q) < 1$ and $c \equiv c(n,N,p,q,[a]_{0,\alpha}, \|u\|_{L^{\infty}(\tx{B}_r)}) \geq 1$ such that:
\begin{equation}\label{sun}
\mint_{\tx{B}_r} \tx{H} \left( x, \frac{u - (u)_{\tx{B}_r}}{r} \right)\, \dd x \leq c \left( \mint_{\tx{B}_r} [\tx{H}(x,Du)]^d \, \dd x \right)^{1/d}.
\end{equation}
The quantity $u - (u)_{\tx{B}_r}$ can be replaced by $u$ if $\operatorname{tr}(u, \partial \tx{B}_r) \equiv 0$, or if $u \equiv 0$ on a subset $A \subset \tx{B}_r$ with $|A|/|\tx{B}_r| > \gamma > 0$, in which case $c$ also depends on $\gamma$.
\end{lemma}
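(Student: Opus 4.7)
The plan is to follow the phase-splitting dichotomy of Colombo--Mingione (cf.\ \cite{comi2}): on the ball $\tx{B}_r$ with $r \leq 1$, either the coefficient $a(\cdot)$ is of lower order relative to the H\"older scale $r^\alpha$, so that the $p$-phase dominates, or else H\"older continuity forces $a(\cdot)$ to be essentially constant on the whole ball. In both alternatives the key analytic tool is the fractional Poincar\'e--Sobolev inequality: for every $s \geq 1$,
\begin{equation*}
\mint_{\tx{B}_r} \left|\frac{u - (u)_{\tx{B}_r}}{r}\right|^{s} \dx \;\leq\; c(n,N,s) \left(\mint_{\tx{B}_r} |Du|^{s\, d_s} \dx\right)^{1/d_s}, \qquad d_s := \frac{n}{n+s} \in (0,1),
\end{equation*}
which I would apply with $s \in \{p, q\}$.

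For the \emph{$p$-intrinsic alternative} $\inf_{\tx{B}_r} a \leq 4[a]_{0,\alpha} r^\alpha$, H\"older continuity immediately gives $\sup_{\tx{B}_r} a \leq c\,[a]_{0,\alpha} r^\alpha$. Since $|u - (u)_{\tx{B}_r}|/r \leq 2\|u\|_{L^\infty(\tx{B}_r)}/r$, using $q - p \leq \alpha$ together with $r \leq 1$ the $q$-term can be absorbed into the $p$-term:
\begin{equation*}
a(x) \left|\frac{u - (u)_{\tx{B}_r}}{r}\right|^{q} \leq c\,[a]_{0,\alpha}\, \|u\|_{L^\infty}^{q-p}\, r^{\alpha - (q-p)} \left|\frac{u - (u)_{\tx{B}_r}}{r}\right|^{p} \leq c \left|\frac{u - (u)_{\tx{B}_r}}{r}\right|^{p}.
\end{equation*}
Applying Poincar\'e--Sobolev with $s = p$ and using $|Du|^p \leq \tx{H}(x, Du)$ then yields \eqref{sun} with $d = d_p := n/(n+p)$.

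For the \emph{$(p,q)$-intrinsic alternative} $\inf_{\tx{B}_r} a > 4[a]_{0,\alpha} r^\alpha$, the bound $|a(x) - \inf_{\tx{B}_r} a| \leq 2^\alpha [a]_{0,\alpha} r^\alpha \leq \tfrac{1}{2} \inf_{\tx{B}_r} a$ shows that $a(\cdot)$ is comparable to $a_0 := \inf_{\tx{B}_r} a$ on the whole ball, hence $\tx{H}(x, z) \simeq |z|^p + a_0 |z|^q$. Applying Poincar\'e--Sobolev separately with exponents $d_p, d_q$ and using the pointwise inequalities $|Du|^{p d_p} \leq \tx{H}(x, Du)^{d_p}$ and $a_0^{d_q} |Du|^{q d_q} \leq c\, \tx{H}(x, Du)^{d_q}$, I would arrive at
\begin{equation*}
\mint_{\tx{B}_r} \tx{H}\!\left(x, \frac{u - (u)_{\tx{B}_r}}{r}\right) \dx \leq c \left[\left(\mint_{\tx{B}_r} \tx{H}(x, Du)^{d_p} \dx\right)^{1/d_p} + \left(\mint_{\tx{B}_r} \tx{H}(x, Du)^{d_q} \dx\right)^{1/d_q}\right].
\end{equation*}
Since $d_q < d_p < 1$ and the normalized $L^s$-mean is non-decreasing in $s$, choosing $d := d_p = n/(n+p)$ collapses both summands into $c\,(\mint \tx{H}(x, Du)^d \dx)^{1/d}$, proving \eqref{sun}.

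Finally, the assertion that $u - (u)_{\tx{B}_r}$ can be replaced by $u$ under zero-trace or zero-on-a-set-of-positive-density hypotheses follows by invoking the corresponding Friedrichs-type or Poincar\'e-type variant of Poincar\'e--Sobolev (with constants depending also on $\gamma$ in the second case), after which the very same dichotomy and absorption argument applies verbatim. The main obstacle I anticipate is not conceptual but organizational: the exponent $d \in (0,1)$ must be chosen consistently across both phases, and the absorption step in the $p$-intrinsic case genuinely needs both $q - p \leq \alpha$ and $r \leq 1$ in order to render the scale factor $r^{\alpha - (q-p)}$ harmless inside a constant depending on $\|u\|_{L^\infty(\tx{B}_r)}$.
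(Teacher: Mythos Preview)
Your proposal is correct and follows exactly the approach the paper defers to, namely the phase-splitting dichotomy of Colombo--Mingione \cite[Section~4]{comi2}: the paper does not supply an independent argument but simply cites that reference, and your reconstruction of the $p$-phase absorption (using $q-p\le\alpha$, $r\le1$, and the $L^\infty$-bound) together with the $(p,q)$-phase reduction (via $a\simeq a_0$ on $\tx{B}_r$) is precisely that method. One small caveat: the Sobolev--Poincar\'e inequality with the critical exponent $d_s=n/(n+s)$ requires $s d_s\ge1$, which fails for $p<n/(n-1)$; in that borderline regime one takes instead $d=\max\{1/p,\,n/(n+p)\}$ (still $<1$), a standard adjustment already present in \cite{comi2}.
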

\noindent
The following is an inner higher integrability lemma.
\begin{lemma}[Inner higher integrability]\label{lemmaHI}
  Let $\Omega$ is a  bounded open subset of $\R^n$ and let  $u \in W^{1,\tx{H}}(\Omega, \overline{\mm})$ be a constrained local minimizer of the functional $\mathcal{G}$ in \eqref{functionalG} under assumptions \eqref{pq}, \eqref{ellipticity}, with $\mm$ as in \ref{item:a} and satisfying \eqref{M}. Then, there exists $\displaystyle \delta \equiv \delta(\tx{data})$, positive integrability exponent, such that for all balls $\tx{B}_{2r} \subset \Omega$, with $r\leq 1$, it holds
\eqn{hi}
$$
\left( \mint_{\tx{B}_r} [\tx{H}(x,Du)]^{1+\delta}\, \dd x \right)^{1/(1+\delta)} \leq c \mint_{\tx{B}_{2r}} \tx{H}(x,Du) \, \dd x,
$$
with $c\equiv c(\tx{data})$.
\end{lemma}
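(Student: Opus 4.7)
The plan is to combine the Caccioppoli inequality \eqref{caccioppoli2} from Lemma~\ref{CaccioppoliLemma} with the intrinsic Sobolev-Poincar\'e inequality \eqref{sun} from Lemma~\ref{IS-Pineq} in order to derive a reverse H\"older-type inequality for $\tx{H}(\cdot,Du)$, and then invoke Gehring's lemma to upgrade this to the desired higher integrability on concentric balls. The scheme is by now classical for double phase functionals, but one has to be careful that all constants remain uniform, which is ensured by the compactness of $\overline{\mm}$ giving a universal $L^\infty$-bound on $u$.

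First I would fix a ball $\tx{B}_{2r} \subset \Omega$ with $r \leq 1$ and consider an arbitrary concentric pair $\tx{B}_{\rho/2}(x_0) \subset \tx{B}_\rho(x_0) \Subset \tx{B}_{2r}$. Applying \eqref{caccioppoli2} on this pair yields
$$
\int_{\tx{B}_{\rho/2}(x_0)} \tx{H}(x, Du)\, \dd x \leq c \int_{\tx{B}_\rho(x_0)} \tx{H}^-_{\tx{B}_\rho(x_0)}\!\left(\frac{u - (u)_{\tx{B}_\rho(x_0)}}{\rho}\right)\, \dd x.
$$
By the definition \eqref{frozen funct} of the frozen integrand one has $\tx{H}^-_{\tx{B}_\rho(x_0)}(t) \leq \tx{H}(x,t)$ for every $x \in \tx{B}_\rho(x_0)$, and hence, upon dividing by $|\tx{B}_\rho(x_0)|$,
$$
\mint_{\tx{B}_{\rho/2}(x_0)} \tx{H}(x, Du)\, \dd x \leq c \mint_{\tx{B}_\rho(x_0)} \tx{H}\!\left(x, \frac{u - (u)_{\tx{B}_\rho(x_0)}}{\rho}\right)\, \dd x.
$$

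Next I would apply the intrinsic Sobolev-Poincar\'e inequality \eqref{sun} to the right-hand side, which is legitimate since $u$ takes values in the compact set $\overline{\mm}$, so $\|u\|_{L^\infty(\tx{B}_\rho(x_0))} \leq c(\overline{\mm})$ is a universal constant; this yields the reverse H\"older-type estimate
$$
\mint_{\tx{B}_{\rho/2}(x_0)} \tx{H}(x, Du)\, \dd x \leq c \left(\mint_{\tx{B}_\rho(x_0)} [\tx{H}(x, Du)]^d\, \dd x\right)^{1/d},
$$
with $d \equiv d(n,p,q) \in (0,1)$ and $c \equiv c(\tx{data})$. Since this inequality holds uniformly for every ball $\tx{B}_\rho(x_0) \Subset \tx{B}_{2r}$, a standard application of Gehring's lemma produces exponents $\delta \equiv \delta(\tx{data}) > 0$ and a constant $c \equiv c(\tx{data})$ such that \eqref{hi} holds on the pair $\tx{B}_r \subset \tx{B}_{2r}$.

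The main point requiring attention is the uniformity of constants across the family of subballs. Specifically, the constant in Sobolev-Poincar\'e depends on $\|u\|_{L^\infty}$ and the one in Caccioppoli depends on $[a]_{0,\alpha}$ and on the $\overline{\mm}$-dependent constants from the extension Lemma~\ref{extensiontheorem}; all of these are independent of the particular ball $\tx{B}_\rho(x_0) \subset \tx{B}_{2r}$, so Gehring's lemma applies cleanly and the conclusion follows without further issue.
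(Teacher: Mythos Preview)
Your proposal is correct and follows essentially the same route as the paper: Caccioppoli plus intrinsic Sobolev--Poincar\'e give a uniform reverse H\"older inequality, and Gehring's lemma finishes. The only minor difference is that you pass through \eqref{caccioppoli2} and then use $\tx{H}^-_{\tx{B}_\rho}\le \tx{H}$ to return to the full integrand, whereas the paper applies the unfrozen Caccioppoli estimate \eqref{asteriscoNotes} (with $r$ and $R=2r$) directly; your detour is harmless but unnecessary.
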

\begin{proof}
Let $\tx{B}_{2r} \subset \Omega$. By the Caccioppoli inequality, Lemma \ref{CaccioppoliLemma}, there exists a constant $c \equiv c(n, N,\ell, L, p, q,\overline{\mm}) > 0$ such that
\[
\int_{\br} \tx{H}(x, Du)\, \dd x \leq c \int_{\brr} \tx{H}\left(x, \frac{u - (u)_{\brr}}{r} \right)\, \dd x.
\]
\noindent On the other hand, by the intrinsic Sobolev-Poincaré inequality, Lemma \ref{IS-Pineq}, we obtain:
\begin{equation}\label{squareNotes}
    \mint_{\br} \tx{H}\left(x, \frac{u - (u)_{\br}}{r} \right)\, \dd x \leq c \left( \mint_{\br} [\tx{H}(x, Du)]^d\, \dd x \right)^{1/d}
\end{equation}
where $d \equiv d(n, p, q) < 1$ and $c = c
(n, N, p, q, [a]_{0,\alpha}, \|u\|_{L^\infty(\Omega)})$.\\

\noindent Inequality \eqref{squareNotes} holds for every ball $\brr \Subset \Omega$. Therefore, we obtained a reverse  H\"older inequality that is uniform over all such balls.\\

\noindent By an application of Gehring's Lemma there exists $\delta \equiv \delta(\tx{data}) > 0$ and a constant
$
c \equiv c(\tx{data})>0
$
such that
\[
\left( \dashint_{\br} [\tx{H}(x, Du)]^{1 + \delta} \dd x \right)^{\frac{1}{1 + \delta}} \leq c \dashint_{\brr} \tx{H}(x, Du)\, \dd x,
\]
as wanted.
\end{proof}
\noindent
We establish the following approximation lemma, whose proof is omitted as it coincides with that in \cite[Section 4]{DF}. We consider a function $\tx{H}_0(t) := t^p + a_0 t^q, a_0 \geq 0$,
namely: no dependence on $x$ is considered.
\begin{lemma}[Harmonic approximation] \label{approximation}
Let $\tx{B}_r \subset \R^n$, with $r\leq 1$,  $\varepsilon \in (0,1]$ and let $v \in W^{1,\tx{H}_0}(\tx{B}_r, \mathbb{R}^N)$, $N\geq 1$, be a function such that 
\eqn{hiapprox}
$$ \left (\mint_{\tx{B}_{r/2}} [\tx{H}_0(Dv)]^{1+\delta} \dx \right )^{\frac{1}{1+\delta}} \leq c_1 \mint_{\tx{B}_r} \tx{H}_0(Dv) \dx,$$
for some $\delta \in (0,1)$, and
\eqn{secondaipotesi}
$$ \left | \mint_{\tx{B}_{r/2}} \partial_z \tx{H}_0(Dv) \cdot D\varphi \dx\right | \leq c_2 \varepsilon^t \mint_{\tx{B}_r}\left [ \tx{H}_0(Dv) + \tx{H}_0(\nr{D\varphi}_{L^{\infty}(\tx{B}_{r/2})})\right ]\dx,$$
where $t \in (0,1]$ and all $\varphi \in C^{\infty}_c(\tx{B}_{r/2}, \R^N)$, where $c_1, c_2$ are absolute constants. Then, there exists $\tilde{h} \in v+W^{1,\tx{H}_0}_0(\tx{B}_{r/2}, \R^N)$ such that 
$$ \int_{\tx{B}_{r/2}}\partial_z \tx{H}_0(D\tilde{h}) \cdot D\varphi \dx =0,$$
for all $\varphi \in W^{1,\tx{H}_0}_0(\tx{B}_{r/2}, \R^N)$. Moreover,
$$ \nr{\tilde{h}}_{L^{\infty}(\tx{B}_{r/2})} \leq \sqrt{N} \nr{v}_{L^{\infty}(\tx{B}_{r/2})}$$
and
$$ \mint_{\tx{B}_{r/2}} \left( |V_p(Dv)-V_p(D\tilde{h})|^2+a_0|V_q(Dv)-V_q(D\tilde{h})|^2\right ) \dx \leq c\varepsilon^m\mint_{\tx{B}_r}\tx{H}_0(Dv) \dx,$$
with $c \equiv c(n,N,p,q,c_1,c_2)$ and $m \equiv m(n,N,p,q,t,\delta)$. Finally, the function $\tilde{h}$ is the unique minimizer of 
$$ v+W^{1,\tx{H}_0}_0(\tx{B}_{r/2}, \R^N) \ni w \mapsto \int_{\tx{B}_{r/2}} \tx{H}_0(Dw) \dx.$$

\end{lemma}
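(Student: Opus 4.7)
The plan is to construct $\tilde h$ as the unique minimizer of the frozen Dirichlet problem and then measure its distance to $v$ in the natural $\mathcal{V}^{2}$-seminorm by combining monotonicity, the Euler--Lagrange identity for $\tilde h$, the almost-critical hypothesis \eqref{secondaipotesi}, and the higher integrability \eqref{hiapprox}. Since $\tx{H}_0$ is strictly convex, coercive and of $(p,q)$-growth with $q<p+\alpha$, the direct method applied to $w\mapsto \int_{\tx{B}_{r/2}}\tx{H}_0(Dw)\,\dx$ on the class $v+W^{1,\tx{H}_0}_0(\tx{B}_{r/2},\R^N)$ yields a unique minimizer $\tilde h$ satisfying
\begin{equation*}
\int_{\tx{B}_{r/2}} \partial_z \tx{H}_0(D\tilde h)\cdot D\varphi\,\dx = 0,\qquad \varphi\in W^{1,\tx{H}_0}_0(\tx{B}_{r/2},\R^N).
\end{equation*}
The bound $\|\tilde h\|_{L^\infty}\le\sqrt{N}\|v\|_{L^\infty}$ is obtained by componentwise truncation: replacing $\tilde h^i$ with its truncation at height $\|v\|_{L^\infty}$ preserves the boundary datum and strictly lowers $\int\tx{H}_0(Dw)\,\dx$ unless nothing is cut off, so uniqueness forces $|\tilde h^i|\le\|v\|_{L^\infty}$ and summing over $N$ components produces the factor $\sqrt{N}$.

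Next, I would invoke the monotonicity inequality \eqref{V}, in its $x$-independent version for $\tx{H}_0$, and test the Euler--Lagrange equation for $\tilde h$ against $v-\tilde h\in W^{1,\tx{H}_0}_0(\tx{B}_{r/2},\R^N)$ to reduce the problem to estimating a single quantity:
\begin{equation*}
\mint_{\tx{B}_{r/2}}\!\!\bigl[|V_p(Dv)-V_p(D\tilde h)|^2 + a_0|V_q(Dv)-V_q(D\tilde h)|^2\bigr]\dx \le c\,\mathcal{I},\quad \mathcal{I}:=\mint_{\tx{B}_{r/2}}\!\!\partial_z\tx{H}_0(Dv)\cdot (Dv-D\tilde h)\,\dx.
\end{equation*}
The hypothesis \eqref{secondaipotesi} is tailor-made to control $\mathcal I$ by $\varepsilon^t$, but only for test functions $\varphi\in C^\infty_c(\tx{B}_{r/2},\R^N)$ with prescribed $L^\infty$-gradient; the admissible candidate $v-\tilde h$ has neither smoothness nor a Lipschitz bound, so this naive substitution is forbidden.

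The heart of the proof, and the main technical obstacle, is to bridge this gap by a \emph{Lipschitz truncation} argument adapted to the double-phase setting. At level $\lambda>0$ one constructs $\varphi_\lambda\in W^{1,\infty}_0(\tx{B}_{r/2},\R^N)$ with $\|D\varphi_\lambda\|_{L^\infty}\lesssim \lambda$, agreeing with $v-\tilde h$ outside a bad set $E_\lambda\subset\tx{B}_{r/2}$. The higher integrability \eqref{hiapprox} (together with its analogue for $\tilde h$, inherited from minimality via a standard Gehring argument for the frozen functional) yields a Chebyshev-type control
\begin{equation*}
\frac{|E_\lambda|}{|\tx{B}_{r/2}|}\lesssim \lambda^{-p(1+\delta)}\left(\mint_{\tx{B}_r}\tx{H}_0(Dv)\,\dx\right)^{1+\delta},
\end{equation*}
with the appropriate $q$-analogue when $a_0>0$. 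Splitting $\mathcal I$ accordingly, the contribution from the good set is estimated by plugging a mollified $\varphi_\lambda$ into \eqref{secondaipotesi}, producing a term of order $\varepsilon^t[\tx{H}_0(\lambda)+\mint_{\tx{B}_r}\tx{H}_0(Dv)\,\dx]$, while the bad-set contribution is bounded via H\"older and the higher integrability, producing a term of order $\lambda^{-\sigma}\mint_{\tx{B}_r}\tx{H}_0(Dv)\,\dx$ for some $\sigma\equiv\sigma(n,p,q,\delta)>0$. Optimising $\lambda$ as a suitable negative power of $\varepsilon$ balances the two contributions and yields $|\mathcal I|\le c\varepsilon^{m}\mint_{\tx{B}_r}\tx{H}_0(Dv)\,\dx$ with $m\equiv m(n,N,p,q,t,\delta)$, which is the desired conclusion. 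The delicate issue—performing the Lipschitz truncation compatibly with both the $p$- and $q$-phases of $\tx{H}_0$ while preserving zero trace on $\partial\tx{B}_{r/2}$—is precisely the construction executed in the reference \cite{DF}, to which the proof defers.
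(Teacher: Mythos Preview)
The paper does not actually give a proof of this lemma: it states explicitly that ``the proof is omitted as it coincides with that in \cite[Section 4]{DF}.'' Your sketch---direct construction of the minimizer $\tilde h$, componentwise truncation for the $L^\infty$ bound, monotonicity to reduce to the quantity $\mathcal I$, and then a Lipschitz truncation argument to pass from the Sobolev test function $v-\tilde h$ to an admissible $C^\infty_c$ test function with controlled gradient, followed by optimisation in the truncation level $\lambda$---is precisely the strategy of that reference, and you correctly identify the Lipschitz truncation compatible with both phases as the only nontrivial ingredient.
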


\noindent
We now state a Morrey-type decay estimate in the vectorial case. The proof closely follows the argument developed in \cite{DFM}.
\begin{theorem}[Morrey type decay estimate]
  Let $\Omega$ is a  bounded open subset of $\R^n$ and let $v \in (W^{1,\tx{H}} \cap L^{\infty})(\Omega, \mathbb{R}^N)$ be a local minimizer of the functional $\mathcal{H}$ in \eqref{functional}
under assumption \eqref{pq}. Then, for every $\sigma \in (0,n]$, there exists a constant $c \equiv c(n,N,p,q,\alpha,[a]_{0,\alpha}, \|v\|_{L^{\infty}(\Omega)}, \sigma)$ such that
\eqn{morrey}
$$
\int_{\tx{B}_\varrho} \tx{H}(x,Dv) \, dx \leq c \left( \frac{\varrho}{r} \right)^{n - \sigma} \int_{\tx{B}_r} \tx{H}(x, Dv) \, dx
$$
for every pair of concentric balls $\tx{B}_\varrho \subset \tx{B}_r \subset \Omega$ with $\varrho \leq 1$.
\end{theorem}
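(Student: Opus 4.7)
The plan is to reduce the estimate to the standard scheme of~\cite{DFM}, which combines a Caccioppoli-type inequality, the intrinsic Sobolev-Poincar\'e inequality, Gehring's higher integrability, and a bootstrap exploiting the $L^{\infty}$-bound on $v$.

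The preliminary observation is that Lemmas~\ref{CaccioppoliLemma}, \ref{IS-Pineq} and~\ref{lemmaHI}, although stated for constrained minimizers of $\mathcal{G}$, continue to hold for a bounded unconstrained local minimizer of $\mathcal{H}$ (as already noted in the paper for Caccioppoli), with constants now depending also on $\nr{v}_{L^{\infty}(\Omega)}$. Indeed, in the unconstrained setting the finite-energy extension of Lemma~\ref{extensiontheorem} is unnecessary, since the competitor $w = v - \eta(v-(v)_{\tx{B}_R})$ is admissible directly, so the proof of Lemma~\ref{CaccioppoliLemma} simplifies accordingly, while the intrinsic Sobolev-Poincar\'e inequality of Lemma~\ref{IS-Pineq} is unchanged. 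Gehring's lemma then yields some $\delta \equiv \delta(n,N,p,q,\alpha,[a]_{0,\alpha},\nr{v}_{L^{\infty}})>0$ together with a reverse-H\"older inequality
\[
\left(\mint_{\tx{B}_r}[\tx{H}(x,Dv)]^{1+\delta}\,\dd x\right)^{\frac{1}{1+\delta}} \leq c\,\mint_{\tx{B}_{2r}}\tx{H}(x,Dv)\,\dd x,
\]
valid on every ball $\tx{B}_{2r}\subset\Omega$ with $r\leq 1$.

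A direct application of H\"older's inequality on $\tx{B}_{\varrho}$ for $\varrho\leq r/2$, combined with the reverse H\"older above on the pair $\tx{B}_{r/2}\subset \tx{B}_{r}$, produces a first Morrey-type estimate
\[
\int_{\tx{B}_{\varrho}}\tx{H}(x,Dv)\,\dd x \leq c\left(\frac{\varrho}{r}\right)^{\frac{n\delta}{1+\delta}}\int_{\tx{B}_{r}}\tx{H}(x,Dv)\,\dd x,
\]
which already proves~\eqref{morrey} for the single value $\sigma = n/(1+\delta)$. To reach every $\sigma\in(0,n]$, the scheme has to be iterated: the Morrey regularity just obtained for $\tx{H}(\cdot,Dv)$, coupled with Poincar\'e's inequality, produces a Campanato estimate for $v$ and therefore H\"older continuity of $v$ with some positive exponent $\alpha_{0}$. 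Plugging the resulting bound $|v-(v)_{\tx{B}_r}|\leq c r^{\alpha_{0}}$ back into the right-hand side of~\eqref{asteriscoNotes} sharpens the decay there (here the condition $q\leq p+\alpha$ from~\eqref{pq} is used to keep the $q$-phase comparable to the $p$-phase after the H\"older bound is inserted), and re-iterating generates a strictly increasing sequence $\alpha_{k}\uparrow 1$ together with corresponding Morrey exponents $n-\sigma_{k}$ with $\sigma_{k}\downarrow 0$, which eventually covers the whole range claimed in the statement. The constant $c$ in~\eqref{morrey} unavoidably deteriorates as $\sigma\downarrow 0$, which explains its explicit dependence on $\sigma$.

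The main obstacle is to carry out this bootstrap quantitatively: at each iteration one must verify that the H\"older/Campanato exponent of $v$ strictly increases and that the constants do not blow up before the target $\sigma$ is reached. This is handled exactly as in the proof of the analogous decay estimate in~\cite{DFM}, where the uniform $L^{\infty}$-bound on $v$ and the structural gap $q<p+\alpha$ encoded in~\eqref{pq} together drive the iteration to closure.
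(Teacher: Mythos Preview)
The paper does not actually give a proof of this theorem: it simply states that the argument ``closely follows the argument developed in~\cite{DFM}''. In that sense you and the paper agree---you also defer to~\cite{DFM}---so at the level of attribution your proposal matches.

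Your sketch of the underlying argument, however, contains a gap. The first Morrey estimate you obtain from H\"older plus the Gehring reverse-H\"older inequality gives the decay exponent $n-\sigma_0$ with $\sigma_0 = n/(1+\delta)$. You then claim that this, via Poincar\'e, yields a Campanato estimate and hence H\"older continuity of $v$ with some exponent $\alpha_0>0$. But the Campanato exponent you get is $1-\sigma_0/p = 1 - n/(p(1+\delta))$, which is positive only when $p(1+\delta)>n$. In the regime $p(1+\delta)\le n$---precisely the case the paper singles out as the substantial one in the proof of Theorem~\ref{maintheorem}---your bootstrap never starts, and the iteration $\alpha_k\uparrow 1$ does not get off the ground.

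The actual argument in~\cite{DFM} does not proceed by this self-improving bootstrap. Instead, on each ball one compares $v$ with the minimizer $h$ of the frozen functional $w\mapsto \int \tx{H}^-_{\tx{B}_r}(Dw)$ with the same boundary values; since the frozen integrand has Uhlenbeck structure, $h$ enjoys a full Lipschitz (indeed $C^{1,\beta}$) estimate with the scale-invariant bound $\sup_{\tx{B}_{r/2}}\tx{H}^-_{\tx{B}_r}(Dh)\le c\,\mint_{\tx{B}_r}\tx{H}^-_{\tx{B}_r}(Dh)$. The comparison error between $v$ and $h$ is then controlled using the higher integrability of $\tx{H}(\cdot,Dv)$ together with the H\"older continuity of $a(\cdot)$ and the boundedness of $v$ (this is where the hypothesis $q\le p+\alpha$ and the dependence on $\nr{v}_{L^\infty}$ enter). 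This yields the decay~\eqref{morrey} directly, for every $\sigma>0$, without any prerequisite H\"older regularity of $v$. Your final sentence correctly points back to~\cite{DFM}, but the mechanism you describe before that is not the one that actually closes the argument.
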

\noindent
\section{Euler-Lagrange system}
\noindent
Now, following the construction outlined in \cite[Theorem 2.1]{F1}, we arrive at the following theorem that is a Euler-Lagrange system for constrained local minimizers. First we analyze case \ref{item:a}, with  $\Omega$ that is a  bounded open subset of $\R^n$, and introduce some notation. We observe that since $\partial \mm \in C^3$, the distance function $d(t):= \text{dist}(t, \partial \mm)$ is regular for $t \in \overline{\mm}$ near $\partial \mm$. By negative reflection we extend $d$ to a smooth function on a tubular neighborhood $U$ of $\partial \mm$ and define the vector fields
\eqn{vf}
$$ \nu(t):= \text{grad} d(t), \ v(x,t):= B^{-1}(x,t)(\nu(t)),$$
for $x \in \overline{\Omega}$ and $t  \in U$, $B^{-1}$ denotes the inverse of $(B^{ij})_{1 \leq i, j \leq N}$. Observe that, on  $\partial \mm$ the vector field $\nu$ is just the interior unit normal vector field. In the following we denote $\{ u \in \partial \mm \} = \{x \in \Omega: u(x) \in \partial \mm \}$.
\begin{theorem}
  Let $\Omega$ is a  bounded open subset of $\R^n$ and let  $u \in W^{1,1}_{\loc}
    (\Omega, \overline{\mm})$ be a local minimizer of the functional $\mathcal{G}$ in \eqref{functionalG}, under assumption  \eqref{ellipticity}, with $\mm$ as in \ref{item:a}. Then, for all $\varphi \in (W^{1,\tx{H}}_0\cap L^{\infty})(\Omega, \R^N)$ it holds
    \begin{align} \label{equation}
    & \int_{\Omega} \left ( pg_p(x,u,Du) + qa(x)g_q(x,u,Du) \right )\left ( G^{ij}_{\alpha \beta}(x,u)D_\alpha u^i D_\beta \varphi^j\right ) \dx \notag  \\ & \qquad + \int_{\Omega} \frac{1}{2} \left ( pg_p(x,u,Du) + qa(x)g_q(x,u,Du)\right )\left ( D_{u^l}G^{ij}_{\alpha \beta} (x,u)D_\alpha u^i D_\beta u^j \varphi^l\right )   \dx \notag \\ 
& \quad = \int_{\{ u \in \partial \mm \}} \theta \frac{\nu(u)\varphi}{\nu(u)v(x,u)} \left ( pg_p(x,u,Du) + qa(x)g_q(x,u,Du)\right )\cdot \left ( G^{ij}_{\alpha \beta}(x,u)D_\alpha u^i D_\beta (v(x,u)^j) \right . \notag \\ & \qquad \left .+ \frac{1}{2}D_{u^l} G^{ij}_{\alpha \beta} (x,u) D_\alpha u^i D_\beta u^j v(x,u)^l\right )   \dx,
    \end{align}
    where $\theta: \Omega \to [0,1]$ is a Lebesgue measurable density function, and
    $$ G^{ij}_{\alpha \beta}(x,u) := g_{\alpha \beta}(x,u) B^{ij}(x,u), \ g_\tx{s} (x,u,z) := (G^{ij}_{\alpha \beta}(x,u)z_{\alpha}^i z_{\beta}^j)^{\frac{\tx{s}}{2}-1}, \ \text{for } \tx{s}=p,q.$$
\end{theorem}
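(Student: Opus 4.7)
The plan is to adapt the classical Fuchs-type construction from \cite[Theorem 2.1]{F1} to the present double phase setting. Because $u$ is constrained to take values in $\overline{\mm}$, a straight perturbation $u+t\varphi$ is inadmissible at points where $u\in\partial\mm$ and $\varphi$ points outward, so I would correct it by composing with the nearest point projection $\pi$ onto $\overline{\mm}$, which is of class $C^{2}$ on a tubular neighborhood of $\partial\mm$ since $\partial\mm\in C^{3}$. For $\varphi\in(W^{1,\tx{H}}_{0}\cap L^{\infty})(\Omega,\R^{N})$ and $t>0$ small, the competitor $u_{t}:=\pi\circ(u+t\varphi)$ lies in $W^{1,\tx{H}}(\Omega,\overline{\mm})$, coincides with $u$ outside $\supp\varphi$, and the local minimality of $u$ furnishes the one-sided inequality $\mathcal{G}(u,\supp\varphi)\leq\mathcal{G}(u_{t},\supp\varphi)$.

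Next I would compute the first variation $\limsup_{t\to 0^{+}}t^{-1}(\mathcal{G}(u_{t})-\mathcal{G}(u))\geq 0$. On the open set $\{u\in\intt(\mm)\}$ the projection acts as the identity, so $u_{t}-u=t\varphi$; on the coincidence set $\{u\in\partial\mm\}$ the second-order expansion of $\pi$ at $\partial\mm$ yields
\[
u_{t}-u=t\bigl(\varphi-(\nu(u)\cdot\varphi)^{+}\nu(u)\bigr)+o(t),
\]
since $\pi$ only kills the outward-pointing component of the perturbation. Using the Caccioppoli bound from Lemma~\ref{CaccioppoliLemma} together with the $L^{\infty}$-bound on $\varphi$ to justify dominated convergence, I would pass to the limit and arrive at
\[
\mathcal{E}(u)[\varphi]\geq\int_{\{u\in\partial\mm\}}(\nu(u)\cdot\varphi)^{+}\,\Lambda(x,u,Du)\dx,
\]
where $\mathcal{E}(u)[\varphi]$ denotes the left-hand side of \eqref{equation} and $\Lambda$ collects the Euler--Lagrange integrand evaluated along the direction $\nu(u)$, including the chain-rule contributions carried by $D\nu(u)$. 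Replacing $\varphi$ by $-\varphi$ produces the symmetric companion inequality involving $(\nu(u)\cdot\varphi)^{-}$.

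Combining the two inequalities through the identity $a^{+}-(-a)^{+}=a$, the two one-sided bounds fuse into the equation
\[
\mathcal{E}(u)[\varphi]=\int_{\{u\in\partial\mm\}}\lambda(x)\,(\nu(u)\cdot\varphi)\dx
\]
for a measurable function $\lambda$. To recover the precise form stated in \eqref{equation}, I would normalize by testing this identity against $\varphi=\eta\,v(x,u)$ with $\eta\in C^{\infty}_{c}(\Omega)$: since $\nu(u)\cdot v(x,u)=B^{-1}(\nu,\nu)>0$ thanks to \eqref{ellipticity}, this test pins $\lambda$ down as $\theta\,\Lambda/(\nu(u)\cdot v(x,u))$ with a measurable $\theta\in[0,1]$, the bound $\theta\le 1$ reflecting the fact that the outward normal derivative of the nearest-point projection has operator norm equal to one.

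The hard part, as always in constructions of this kind, will be the rigorous first-variation expansion: the integrand $\tx{g}(x,u,Du)$ is nonlinear both in $u$ through $g_{\alpha\beta}(x,u),B^{ij}(x,u)$ and in $Du$, the correction introduces second-order contributions involving $D\nu(u)$ and $Dv(x,u)$ through the chain rule, and the $p$-growth piece is degenerate where $Du$ vanishes. In addition, upgrading $\theta$ from an a.e.\ defined quantity on $\{u\in\partial\mm\}$ to a genuine Lebesgue measurable density compatible with writing the boundary term as a $dx$-integral will rely crucially on the $C^{3}$-regularity of $\partial\mm$ combined with the higher integrability provided by Lemma~\ref{lemmaHI}.
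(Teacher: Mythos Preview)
Your projection-based route is genuinely different from the paper's argument, and as written it has a real gap in two places.

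First, the one-sided inequality you state does not follow from the expansion of $u_t=\pi\circ(u+t\varphi)$. The first variation of $\mathcal{G}(u_t)$ at $t=0^{+}$ requires differentiating $Du_t$, not just $u_t$. On $\{u\in\partial\mm\}\cap\{\text{outward}\}$ you have $u_t=\pi_{\partial\mm}(u+t\varphi)$ and hence
\[
\tfrac{d}{dt}Du_t\big|_{t=0^{+}}=D^{2}\pi_{\partial\mm}(u)[\varphi,Du]+D\pi_{\partial\mm}(u)\,D\varphi,
\]
so the correction term carries $D\pi_{\partial\mm}(u)\,D\varphi-D\varphi=-(\nu\otimes\nu)D\varphi+(\text{curvature})$, i.e.\ it genuinely depends on $D\varphi$, not only on $\varphi$. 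You can rewrite $\nu\cdot\partial_\alpha\varphi=\partial_\alpha(\nu(u)\cdot\varphi)-D\nu(u)\partial_\alpha u\cdot\varphi$, but the term $\partial_\alpha(\nu(u)\cdot\varphi)$ cannot be integrated by parts because the domain $\{u\in\partial\mm\}\cap\{\nu(u)\cdot\varphi<0\}$ has no regularity. So there is no reason for the right-hand side to collapse to $\int(\nu\cdot\varphi)^{\pm}\Lambda(x,u,Du)\dx$ with a $\Lambda$ independent of $\varphi$. (There is also a sign slip: with $\nu=\mathrm{grad}\,d$ the \emph{inward} normal, the outward part you wish to kill is $(\nu\cdot\varphi)^{-}$, not $(\nu\cdot\varphi)^{+}$.)

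Second, even if one grants the two one-sided inequalities, combining them via $a^{+}-(-a)^{+}=a$ does not produce an equation of the form $\mathcal{E}(u)[\varphi]=\int\lambda\,(\nu\cdot\varphi)\dx$. You only obtain a sandwich $\int(\nu\cdot\varphi)^{+}\Lambda\leq\mathcal{E}(u)[\varphi]\leq-\int(\nu\cdot\varphi)^{-}\Lambda$, which for fixed $\varphi$ does not pin down $\mathcal{E}(u)[\varphi]$ and certainly does not manufacture a density $\lambda$ or the factor $\theta\in[0,1]$.

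The paper avoids both issues by following Fuchs's scheme: perturb only along the fixed field $v(x,u)=B^{-1}(x,u)\nu(u)$ with a \emph{scalar} amplitude $\eta\,h_\varepsilon(d(u))$, so that the one-sided first variation is a positive linear functional in $\eta$ and Riesz representation yields a nonnegative Radon measure $\lambda$; then one lets $\varepsilon\to 0$ to concentrate $\lambda$ on $\{u\in\partial\mm\}$ and applies Radon--Nikodym to extract the density $\theta\in[0,1]$. Tangential directions are handled separately by flowing along vector fields $T$ with $T\cdot\nu=0$ (here both signs of $\delta$ are admissible, so one gets an \emph{equation}). Finally an arbitrary $\psi$ is decomposed, via a partition of unity on $\partial\mm$, into a $v$-component and tangential components, and the pieces are assembled into \eqref{equation}. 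In particular the appearance of $D_\beta(v(x,u)^j)$ on the right-hand side, and the way $\theta\in[0,1]$ arises, both come from this Riesz/Radon--Nikodym step and not from any algebraic combination of two inequalities.
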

\begin{proof}
From  $(\ref{ellipticity})_{3}$, for $x \in \overline{\Omega}$ and $t  \in U$, we get
$$ \nu(t)\cdot v(x,t) >0.$$
Let $h_\varepsilon: [0,\infty) \to [0,1]$ be a smooth function so that $h_\varepsilon (s)=1$ for $0\leq s \leq \varepsilon$, $h_\varepsilon(s)=0$ for $s \geq 2\varepsilon$ and $h_\varepsilon' \leq 0$. For $\eta \in C^{1}_0(\Omega)$, $\eta \geq 0$, and small positive $\delta$ the following test function  is admissible
$$ u_\delta := u + \delta\eta h_\varepsilon(d(u))v(\cdot,u).$$
By the minimality of $u$, it holds
$$ \lim_{\delta \to 0^+} \frac{1}{\delta} (\mathcal{G}(u_\delta, \Omega) - \mathcal{G}(u,\Omega)) \geq 0.$$
By Riesz representation theorem there exists a Radon measure $\lambda  \geq 0$ on $\Omega$ such that
\begin{align} \label{eq1}
    \int_\Omega  \eta \d\lambda &= \int_{\Omega} (p g_p(x,u,Du)+ qa(x) g_q(x,u,Du)) G^{ij}_{\alpha \beta}(x,u)D_{\alpha}u^i D_\beta(\eta h_\varepsilon(d(u)))v(x,u)^j) \dx  \notag \\ & \quad + \int_{\Omega}\frac{1}{2}(p g_p(x,u,Du)+ qa(x) g_q(x,u,Du))D_{u^l} G^{ij}_{\alpha \beta} (x,u)D_\alpha u^i D_\beta u^j \eta h_\varepsilon (d(u)) v(x,u)^l \dx =: \tx{I} + \tx{II}.
\end{align}
Note that, for $\tilde{\varepsilon} \neq \varepsilon$ and $|\delta| \ll 1$, the variation $u+\delta\eta(h_\varepsilon (d(u))- h_{\tilde{\varepsilon}}(d(u)))v(\cdot,u)$ is  admissible,  so $\lambda$  is independent of $\varepsilon$. Now, we  observe that since $\nu(u)^i D_\alpha u^i = D_\alpha (d(u)) =0$ on $\{ u \in \partial \mm \}$ and $h_\varepsilon '(d(u)) \leq 0$, we get
$$ \tx{I} \overset{\varepsilon \to 0}{\longrightarrow} \int_{\{ u \in \partial \mm \}} (pg_p(x,u,Du)+qa(x)g_q(x,u,Du)) G^{ij}_{\alpha \beta}(x,u) D_\alpha u^i D_\beta (v(x,u)^j) \eta \dx$$
and
$$ \tx{II} \overset{\varepsilon \to 0}{\longrightarrow} \int_{\{ u \in \partial \mm\} }(pg_p(x,u,Du) + qa(x)g_q(x,u,Du)) D_{u^l}G^{ij}_{\alpha \beta}(x,u) D_\alpha u^i D_\beta u^j v(x,u)^l \eta \dx.$$
Therefore, Radon Nikodym theorem gives the existence of a density function $\theta: \Omega \to [0,1]$ such that $\lambda$ can be written as
\begin{align*}
    \lambda & = \chi_{\{u \in \partial \mm \}} \theta (pg_p(x,u,Du)+qa(x)g_q(x,u,Du)) \left \{ G^{ij}_{\alpha \beta} (x,u) D_\alpha u^i D_\beta (v(x,u)^j) \right. \\ & \qquad +\left . \frac{1}{2}D_{u^l}G^{ij}_{\alpha \beta}(x,u)D_\alpha u^i D_\beta u^j v(x,u)^l \right \} \lfloor \mathscr{L}^n ,
\end{align*}
where $\mathscr{L}^n$ is the $n$-dimensional Lebesgue measure, this shows that we can take $\eta \in (W^{1,\tx{H}}_0 \cap L^\infty)(\Omega)$. Let us now take a vector field $ T : \R^N \to \R^N$ with support contained in a ball centered in $\partial \mm$ and such that $T \cdot \nu =0$. Let $\Phi(s,u)$ denotes the flow of $T$, then $v_\delta := \Phi(\delta\eta h_\varepsilon(d(u)),u)$ is admissible for $\eta \in C^{1}_0(\Omega)$ and $|\delta|$ small. From the minimality of $u$, we get
\begin{align} \label{eq2}
    & \int_\Omega (pg_p(x,u,Du) + qa(x)g_q(x,u,Du))G^{ij}_{\alpha  \beta}(x,u)D_\alpha  u^i D_\beta (\eta h_\varepsilon (d(u))T^j(u)) \dx \notag \\ & \qquad + \int_{\Omega} \frac{1}{2} (pg_p(x,u,Du) + qa(x)g_q(x,u,Du)) D_{u^l} G^{ij}_{\alpha \beta} (x,u) D_\alpha u^i D_\beta u^j \eta h_\varepsilon (d(u)) T^l(u) \dx =0.
\end{align}
We cover $\partial \mm$ with balls $\tx{B}_k = \tx{B}_r(y_k)$, $y_k \in \partial \mm$, $k =1,\dots,\tx{L}$ such that $\tx{B}_{3r}(y_k) \Subset U$ and choose a partition of the unity $\{ \varphi_k\}$ such that
$$ \text{supp}(\varphi_k) \Subset \tx{B}_{2r}(y_k), \quad \sum_{k=1}^\tx{L} \varphi_k =1 \text{ on } \bigcup_{k=1}^\tx{L} \tx{B}_k \supset \partial \mm.$$
Consider the vector fields $T_{k,1}, \dots, T_{k,N-1}$ with the following properties
$$ \text{supp}(T_{k,i}) \Subset \tx{B}_{3r}(y_k), \quad T_{k,i} \cdot T_{k,j} = \delta_{i,j}, \quad T_{k,i} \cdot \nu = 0 \text{ on } \tx{B}_{2r}(y_k).$$
A test vector $\psi \in C^{1}_0(\Omega, \R^N)$ has the decomposition
$$ \varphi_k (u) \psi = \eta_k v(x,u) + \sum_{i=1}^{N-1} \eta_{k,i} T_{k,i} (u)$$
where
\eqn{eta}
$$ \eta_k = \varphi_k (u) \psi \cdot \nu(u)/(\nu(u)\cdot v(\cdot,u)), \quad \eta_{k,i} = \varphi_k(u) T_{k,i}(u) \psi - \eta v(\cdot,u) \cdot T_{k,i}(u),$$
note that $\eta_k, \eta_{k,i} \in (W^{1,\tx{H}} \cap L^{\infty})(\Omega)$ have compact support in $\Omega$. From \eqref{eq1} and \eqref{eq2}, we get
\begin{align*}
  \int_{\Omega} \eta \d\lambda   & =  \int_\Omega (pg_p(x,u,Du) + qa(x) g_q(x,u,Du)) G^{ij}_{\alpha \beta} (x,u) \\ & \qquad \cdot  D_\alpha u^i D_\beta \left [\left ( \sum_{l=1}^{N-1} \eta_{k,l}T_{k,l}(u) + \eta_k v(x,u) \right )^j h_\varepsilon (d(u)) \right ] \dx \\ & \quad + \int_{\Omega} \frac{1}{2}(pg_p(x,u,Du) + qa(x) g_q(x,u,Du))D_{u^h}G^{ij}_{\alpha \beta}(x,u) \\ & \qquad \cdot D_\alpha u^i D_\beta u^j \left ( \sum_{l=1}^{N-1} \eta_{k,l} T_{k,l} + \eta_k v(x,u)\right )^h h_\varepsilon (d(u)) \dx.
\end{align*}
By \eqref{eta}, the equation above turns into
\begin{align*}
   & \int_{\Omega} \frac{\psi \cdot \nu(u)}{\nu(u) \cdot v(x,u)} \varphi_k(u) \d\lambda \\ & \quad = \int_\Omega (pg_p(x,u,Du) + qa(x)g_q(x,u,Du))G^{ij}_{\alpha \beta} (x,u) D_\alpha u^i D_\beta \left ( \varphi_k(u) h_\varepsilon (d(u)) \psi^j\right ) \dx \\ & \qquad + \int_{\Omega} \frac{1}{2}(pg_p(x,u,Du) + qa(x)g_q(x,u,Du))D_{u^h}G^{ij}_{\alpha \beta}(x,u) D_\alpha u^i D_\beta u^j \varphi_k(u) h_\varepsilon (d(u))  \psi^h \dx.
\end{align*}
Now, from the last displayed equation, taking the sum with respect to $k$, observing that $\sum_{k=1}^{\tx{L}} \varphi_k =1$ on the set where $h_\varepsilon \circ d \neq 0$ and adding the identity
$$ \frac{d}{d\delta|0}\mathcal{G}(w_\delta, \Omega) =0, \quad w_\delta := u + \delta(1-h_\varepsilon(d(u))) \psi,$$
we finally get \eqref{equation}.
\end{proof}
\noindent
Let us now turn to case \ref{item:b}. Recall that $u$ is a local minimizer of $\mathcal{H}$ in \eqref{functional}. Let $d_Y$ denote the Riemannian distance of points $y,t$ in $Y$ and $\rho(t):=d_Y(t,\partial \mm)$ for $t \in \overline{\mm}$ near $\partial \mm$. By negative reflection we extend $\rho$ to a smooth function in a tubular neighboorhood  $V_\mm$ of $\partial \mm$ in $Y$. Define $\nu(t):=  \text{grad}_Y\rho(t)$. Then, denoting with $du^i$ the gradient of $u^i$ with respect  to the metric  on $\Omega$ and following the same calculations of before, we get
$$ \int_\Omega (p|du|^{p-2}+qa(x)|du|^{q-2})du^i \cdot d(\eta h_\varepsilon (\rho(u))\nu(u)^i) \text{d}\mathscr{H}^n= \int_{\Omega} \eta \d\lambda,$$
where $\lambda \geq 0$ is a Radon measure on $\Omega$ of the form
$$ \lambda = {\chi}_{\{ u \in \partial \mm\}}\theta\left \{p|du|^{p-2} +qa(x)|du|^{q-2} \right \}du^i \cdot d(\nu(u)^i) \lfloor \mathscr{H}^n,$$
where $\theta: \Omega \to [0,1]$ is a $\mathscr{H}^n$-measurable density function. In similar fashion to before, we obtain
\eqn{2.8}
$$ \int_{\Omega} (p|du|^{p-2}+qa(x)|du|^{q-2})du^i \cdot d\varphi^i \d \mathscr{H}^n= \int_{\Omega} \varphi \cdot \nu(u) \d\lambda,$$
for vector fields $\varphi \in (W^{1,\tx{H}}_0\cap L^{\infty})(\Omega, \R^N)$ along $u$. If $\Pi: V_Y \to Y$ is the smooth nearest point retraction defined on a suitable tubular neighborhood $V_Y$ of $Y$, and if $\psi \in C^{1}_0(\Omega, \R^N)$ is arbitrary, then $\varphi:= D\Pi|_u(\psi)$ is a field along $u$ for which \eqref{2.8} holds. Therefore, the following theorem can be readily shown.
\begin{theorem}
  Let $\Omega$ is a  bounded open subset of a $n$-dimensional Riemannian manifold and let $u \in W^{1,1}_{\loc}
    (\Omega, \overline{\mm})$ be a local minimizer of the functional $\mathcal{H}$ in \eqref{functional}, with $\mm$ as in \ref{item:b}. Then, for all $\varphi \in (W^{1,\tx{H}}_0\cap L^{\infty})(\Omega, \R^N)$ it holds
\begin{align} \label{euler2}
   & \int_{\Omega}(p|du|^{p-2} +qa(x)|du|^{q-2})(du^i \cdot d\varphi^i + du^i \cdot du^kD^2_{lk}\Pi^i(u) \varphi^l) \d\mathscr{H}^n \notag \\ & \qquad = \int_{\{u\in \partial \mm\}}\theta \varphi\cdot \nu (p|du|^{p-2}+qa(x)|du|^{q-2})du^i \cdot d(\nu(u)^i) \d\mathscr{H}^n,
\end{align}
where $\theta: \Omega \to [0,1]$ is a $\mathscr{H}^n$-measurable density function.
\end{theorem}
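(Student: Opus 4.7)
The plan is to apply the already established identity \eqref{2.8} to a tangentially projected test field built out of an arbitrary $\psi \in C^1_0(\Omega,\R^N)$. First I would set $\tilde\varphi(x) := D\Pi|_{u(x)}(\psi(x))$, where $\Pi: V_Y \to Y$ is the nearest point retraction. Since $\Pi|_Y = \mathrm{id}_Y$, the differential $P(t) := D\Pi|_t$ for $t \in Y$ is the orthogonal projection of $\R^N$ onto $T_tY$; in particular $P$ is symmetric and $\tilde\varphi(x) \in T_{u(x)}Y$ almost everywhere, so $\tilde\varphi$ is an admissible field along $u$ in \eqref{2.8}.

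Next I would expand $d\tilde\varphi$ by the chain rule componentwise, obtaining
\begin{equation*}
d\tilde\varphi^i = P^i_j(u)\,d\psi^j + D^2_{kj}\Pi^i(u)\,du^k\,\psi^j.
\end{equation*}
The crucial observation is the tangency relation $du(x) \in T_{u(x)}Y$ for a.e.\ $x \in \Omega$, which, combined with the symmetry $P^i_j = P^j_i$, gives $P^i_j(u)\,du^i = du^j$. Inserting these identities into the left-hand side of \eqref{2.8}, relabeling dummy indices and using the symmetry $D^2_{kj}\Pi^i = D^2_{jk}\Pi^i$, transforms the integrand into
\begin{equation*}
(p|du|^{p-2}+qa(x)|du|^{q-2})\bigl(du^i \cdot d\psi^i + du^i \cdot du^k D^2_{lk}\Pi^i(u)\,\psi^l\bigr),
\end{equation*}
which matches the left-hand side of \eqref{euler2} with $\psi$ in place of the statement's $\varphi$.

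For the right-hand side of \eqref{2.8}, I would use that $\nu = \mathrm{grad}_Y \rho$ is by construction tangent to $Y$, so $P(\nu(u)) = \nu(u)$; the symmetry of $P$ then yields $\tilde\varphi \cdot \nu(u) = \psi \cdot P(\nu(u)) = \psi \cdot \nu(u)$. Plugging this into the explicit form of the Radon measure $\lambda$ recorded just above \eqref{2.8}, the right-hand side collapses onto $\{u \in \partial \mm\}$ and reproduces the boundary integral in \eqref{euler2}. Renaming $\psi \mapsto \varphi$ delivers \eqref{euler2} for every $\varphi \in C^1_0(\Omega,\R^N)$.

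The concluding step is a standard density argument: the fields $P(\cdot)$, $D^2\Pi(\cdot)$ and $\nu(\cdot)$ are bounded on a tubular neighborhood of $Y$, $u$ is bounded, and $\tx{H}(\cdot, Du) \in L^1_{\loc}(\Omega)$, so both sides of \eqref{euler2} are continuous with respect to the natural $W^{1,\tx{H}}_0 \cap L^\infty$ topology, and the identity extends to all admissible $\varphi$. I expect the main delicate point to be the tangency statement $du(x) \in T_{u(x)}Y$ almost everywhere; this follows from the manifold constraint $u(x) \in \overline{\mm} \subset \mathrm{Int}(Y)$ combined with the chain rule for Sobolev maps composed with a local parametrization of $Y$, but deserves an explicit acknowledgement since the entire projection argument hinges on it.
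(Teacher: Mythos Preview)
Your proposal is correct and follows exactly the approach the paper outlines in the paragraph preceding the theorem: apply \eqref{2.8} with the tangentially projected field $\varphi := D\Pi|_u(\psi)$ for arbitrary $\psi \in C^1_0(\Omega,\R^N)$, then extend by density. You have simply filled in the chain-rule and projection computations that the paper leaves implicit when it says the theorem ``can be readily shown.''
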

\noindent
Note that, for a submanifold as in \ref{item:c}, the Euler Lagrange system is the one in \eqref{euler2} without the boundary term.

\section{Partial regularity result} \label{sec3}
\noindent
We start this section proving the gradient partial regularity in the case of Theorem \ref{maintheorem}. The proof is organized in three steps.  In the first two steps, we show that the minimizer is locally  $\gamma$-H\"older continuous, for any exponent \(\gamma\in (0,1)\), outside a set of Lebesgue measure zero.  In the third step, we establish that the gradient is locally  $\beta$-H\"older continuous for some exponent \(\beta\in(0,1)\).  We also provide a precise description of the regular and singular sets.
\begin{proof}[Proof of Theorem \ref{maintheorem}]
Let $u \in W^{1,\tx{H}}_{\loc}(\Omega, \overline{\mm})$ be as in the statement of Theorem \ref{maintheorem}. We begin by considering the case \( p(1+\delta) \leq n \), where \(\delta\) denotes the exponent appearing in Lemma~\ref{lemmaHI}.
We divide the proof into three steps. \\
\textbf{Step 1:} H\"older regularity of the minimizer.  \\
Let $\bb_{r} \equiv \bb_{r}(\tilde{x})$ be any ball such that $\bb_{2r} \Subset \Omega$, where $r \in (0,r_*/2)$, with $r_* \in (0,1)$ that will be choosen later. Assume
\eqn{smallness2}
$$ \mint_{\brr} \ff(x,Du) \dx < \ff_{\brr}^-\left ( \frac{\varepsilon}{2r}\right ),$$
for some $\varepsilon \in (0,1)$ that will be defined later. Let $v \in u+ W^{1,\tx{H}}_0(\bb_r, \R^N)$ be the solution to the Dirichlet problem
\eqn{dirichlet}
$$  u+ W^{1,\tx{H}}_0(\bb_r, \R^N) \ni w \mapsto \min_{w \in u+ W^{1,\tx{H}}_0(\bb_r, \R^N)}\int_{\bb_r} \tilde{\g}(x,Dw) \dx,$$
where 
\begin{align} \label{ng}
   \tilde{\g}(x,z) & :=(g_{\alpha \beta}(\Tilde{x},(u)_{\tx{B}_{r}})B^{ij}(\Tilde{x},(u)_{\tx{B}_{r}})z^i_{\alpha}z^j_{\beta})^{\frac{p}{2}} + a(x)(g_{\alpha \beta}(\Tilde{x},(u)_{\tx{B}_{r}})B^{ij}(\Tilde{x},(u)_{\tx{B}_{r}})z^i_{\alpha}z^j_{\beta})^{\frac{q}{2}} \notag \\ & := (\tilde{g}_{\alpha \beta}\tilde{B}^{ij}z^i_{\alpha}z^j_{\beta})^{\frac{p}{2}} + a(x)(\tilde{g}_{\alpha \beta}\tilde{B}^{ij}z^i_{\alpha}z^j_{\beta})^{\frac{q}{2}}.
\end{align}
We observe that for all $x \in \Omega$, $z \in \R^{N\times n}$, it holds
\eqn{growth}
$$ \ell \ff(x,z) \leq \tilde{\g}(x,z) \leq L \ff(x,z).$$
Now, we can rewrite equation \eqref{equation} in terms of the derivative with respect to $z$ of the integrand $\tx{g}$ in \eqref{functionalG}, and a term $\tx{f}$ that arises from the $C^1$-dependence of $\tx{g}$ on the second variable $u$ as well as the constraint $\overline{\mm}$.  Then, for all $\varphi \in  (W^{1,\tx{H}}_0 \cap L^{\infty}  )(\Omega, \R^N)$, it holds
\eqn{equation2}
$$ \int_{\Omega} \partial_z\g(x,u,Du) \cdot D \varphi \dx = \int_{\Omega} \tx{f}(x,u,Du) \cdot \varphi \dx. $$
Note that $\tx{f}: \Omega \times \R^N \times \R^{N\times n} \to \R^N$ satisfies the growth condition
$$ |\tx{f}(x,u,z)| \leq c(|z|^p +a(x)|z|^q + |z|^{p-1} + a(x)|z|^{q-1}),$$
for some positive constant $c \equiv c(n,N,L,p,q,\nr{D_u B}_{L^{\infty}(\overline{\Omega}\times \overline{\mm})},\nr{D_u g}_{L^{\infty}(\overline{\Omega}\times \overline{\mm})},\overline{\mm})$. Now, since matrices $\tilde{g}, \tilde{B}$ in \eqref{ng} are constant, symmetric, and uniformly elliptic, we can trasform integral $\int \tilde{\tx{g}}(x,Dw)$ into the classic double phase one, by means of a change of coordinates both in the domain of definition and in the image space, see \eqref{cv}. Then, we deduce that since $u$ is bounded also $v \in L^{\infty}(\br, \R^N)$. Now, by \eqref{dirichlet}, $v$ solves
\eqn{equation3}
$$ \int_{\br}\partial_z \tilde{\g}(x,Dv) \cdot D\varphi \dx =0,$$
for all $\varphi \in (W^{1,\tx{H}}_0\cap L^{\infty})(\br, \R^N)$. Then, using \eqref{V} and \eqref{equation2}, \eqref{equation3} with the choice of $\varphi := u-v$, we get
\begin{align} \label{3eq}
    \mint_{\br} \mathcal{V}^2(x,Du,Dv) \dx & \leq c\mint_{\br}|\partial_z \tilde{\g}(x,Du) - \partial_z \g(x,u,Du)||Du-Dv| \dx \notag \\ & \quad + c\mint_{\br} (|Du|^{p-1} + a(x)|Du|^{q-1})|u-v| \dx \notag \\ & \quad + c\mint_{\br} (|Du|^{p} + a(x)|Du|^{q})|u-v| \dx =: \tx{I} + \tx{II}+ \tx{III},
\end{align}
with $c \equiv c(n,N,\ell,L,p,q,\nr{D_u B}_{L^{\infty}(\overline{\Omega}\times \overline{\mm})},\nr{D_u g}_{L^{\infty}(\overline{\Omega}\times \overline{\mm})},\overline{\mm})$.
Let us estimate the three terms in the right hand side above. First, we observe that by Section \ref{properties}, Jensen's inequality, Sobolev-Poincar\'e inequality for double phase functional (i.e. \eqref{sun} with $d=1$) and by the minimality of $v$ in conjunction with \eqref{growth}, we have
\begin{align} 
    \mint_{\br} |u-v| \dx & \mathrel{\eqmathbox{\overset{\mathrm{}}{\leq}}}  r \left (\ff^{-}_{\brr} \right )^{-1} \circ \left (\ff^{-}_{\brr} \right )\left (\mint_{\br} \left | \frac{u-v}{r} \right |\dx \right )
    \notag \\ & \mathrel{\eqmathbox{\overset{\mathrm{}}{\leq}}}  r \left (\ff^{-}_{\brr} \right )^{-1}\left ( \mint_{\br} \ff^-_{\brr}\left (\frac{u-v}{r} \right )\dx \right )  \notag  \\ & \mathrel{\eqmathbox{\overset{\mathrm{}}{\leq}}}  r \left (\ff^{-}_{\brr} \right )^{-1}\left ( \mint_{\br} \ff\left (x, \frac{u-v}{r} \right )\dx \right )  \notag\\ & \mathrel{\eqmathbox{\overset{\mathrm{}}{\leq}}} cr \left (\ff^{-}_{\brr} \right )^{-1} \left (\mint_{\br} \ff\left (x, Du-Dv \right )\dx \right )\notag\\ & \mathrel{\eqmathbox{\overset{\mathrm{}}{\leq}}} cr \left (\ff^{-}_{\brr} \right )^{-1} \left (\mint_{\brr} \ff\left (x, Du \right )\dx \right ) \label{uv} \\ & \mathrel{\eqmathbox{\overset{\mathrm{\eqref{smallness2}}}{\leq}}} cr \left (\ff^{-}_{\brr} \right )^{-1} \circ \ff^-_{\tx{B}_{2r}(\tilde{x})}\left (\frac{\varepsilon}{2r}\right ) \leq c\varepsilon \label{uv2},
\end{align}
with $c \equiv c(n,N,\ell,L,p,q,\alpha,[a]_{0,\alpha})$. Note that, in a similar fashion, we get
\begin{align} \label{uu}
    \mint_{\br} |(u)_{\br}-u| \dx  \leq cr \left (\ff^{-}_{\brr} \right )^{-1} \left (\mint_{\brr} \ff\left (x, Du \right )\dx \right ). 
\end{align}
\noindent
For $\tx{III}$, using H\"older inequality with conjugate exponents $(1+\delta, (1+\delta)/\delta)$, \eqref{hi}, \eqref{uv}, we obtain
\begin{align} \label{e1}
   \tx{III} & \leq  c\left ( \mint_{\br} \ff(x,Du)^{1+\delta} \dx \right )^{\frac{1}{1+\delta}} \left (  \mint_{\br} |u-v|^{\frac{1+\delta}{\delta}} \dx\right )^{\frac{\delta}{1+\delta}} \notag\\ & \leq c \mint_{\brr} \ff(x,Du) \dx \left (  \mint_{\br} |u-v| \dx\right )^{\frac{\delta}{1+\delta}} \notag\\ & \leq c\mint_{\brr}\ff(x,Du) \dx  \left [ r\left (\tx{H}^-_{\brr}\right )^{-1}\left ( \mint_{\brr} \ff(x,Du) \dx\right )\right ]^{\frac{\delta}{1+\delta}},
\end{align}
with $$c \equiv c(\tx{data}_1) :=c(\tx{data}, \nr{D_ug}_{L^{\infty}},\nr{D_uB}_{L^{\infty}} ).$$ For $\tx{II}$ we apply H\"older's inequality, Sobolev-Poincarè inequality for double phase functional (with $d=1$) and the minimality of $v$ in conjunction with \eqref{growth}, getting
\begin{align} \label{e2}
\tx{II} & = c\mint_{\br}|Du|^{p-1}|u-v| \dx + c\mint_{\br} a(x)^{\frac{1}{q}}|u-v|a(x)^{\frac{q-1}{q}}|Du|^{q-1} \dx \notag \\ & \leq c\left( \mint_{\br} 
|Du|^p \dx\right)^{\frac{p-1}{p}} \left (\mint_{\br} |u-v|^{p} \dx \right )^{\frac{1}{p}} + c\left( \mint_{\br} 
a(x)|Du|^q \dx\right)^{\frac{q-1}{q}} \left (\mint_{\br} a(x)|u-v|^{q} \dx \right )^{\frac{1}{q}} \notag \\ & \leq cr\left( \mint_{\br} 
|Du|^p \dx\right)^{\frac{p-1}{p}} \left (\mint_{\br} \tx{H}\left (x,\frac{u-v}{r}\right ) \dx \right )^{\frac{1}{p}} + cr^{\frac{p}{q}}\left( \mint_{\br} 
a(x)|Du|^q \dx\right)^{\frac{q-1}{q}} \left (\mint_{\br} \tx{H}\left (x,\frac{u-v}{r}\right )\dx \right )^{\frac{1}{q}} \notag
\\ &  \leq cr^{\frac{p}{q}} \mint_{\br}\ff(x,Du) \dx,
\end{align}
\noindent
with $c \equiv c(\tx{data}_1)$. As regards term $\tx{I}$, using $(\ref{ellipticity})_2$
 and Young's inequality, we have
\begin{align*}
    \snr{\tx{I}} & \leq c\mint_{\br}\snr{\partial_z \tilde{\g}(x,Du) - \partial_z \g(x,u,Du)}\snr{Du-Dv} \dx \\ & \leq c  \mint_{\br}\left (r + \snr{(u)_{\br}-u} \right )\snr{Du}^{p-1}\snr{Du-Dv} \dx \\ & \quad + c  \mint_{\br} a(x)\left (r + \snr{(u)_{\br}-u} \right )\snr{Du}^{q-1}\snr{Du-Dv} \dx  \\ & \leq c \left (\lambda \mint_{\br} \snr{Du-Dv}^p \dx + \lambda^{1-p} r^{\frac{p}{p-1}}\mint_{\br}\snr{Du}^{p} \dx + \lambda^{1-p}\mint_{\br}\snr{(u)_{\br} -u}^{\frac{p}{p-1}} \snr{Du}^p \dx \right . \\ & \quad +  \tilde{\lambda} \mint_{\br} a(x)\snr{Du-Dv}^q \dx + \tilde{\lambda}^{1-q}r^{\frac{q}{q-1}} \mint_{\br}a(x)\snr{Du}^{q} \dx \\ & 
    \qquad + \left . \tilde{\lambda}^{1-q}\mint_{\br}a(x)\snr{(u)_{\br} -u}^{\frac{q}{q-1}} \snr{Du}^q \dx \right ),
\end{align*}
with $c \equiv c(\tx{data}_1)$. Using \eqref{uu}, and the ideas to obtain \eqref{e1}, we get 
\begin{align} \label{e3}
    \snr{\tx{I}} & \leq c   \left ((\lambda + \tilde{\lambda}) \mint_{\br}\ff(x,Du-Dv) \dx + (\lambda^{1-p} +\tilde{\lambda}^{1-q})r^{\frac{q}{q-1}}\mint_{\brr} \ff(x,Du) \dx \right . \notag \\ & \qquad  + (\lambda^{1-p} + \tilde{\lambda}^{1-q})\left .\mint_{\br} \snr{(u)_{\br} -u}\ff(x,Du) \dx \right )\notag \\ & \leq c\left ( (\lambda + \tilde{\lambda}) \mint_{\br}\ff(x,Du-Dv) \dx +(\lambda^{1-p} + \tilde{\lambda}^{1-q})r\mint_{\brr}\ff(x,Du) \dx\right .\notag \\ & \qquad \left .+(\lambda^{1-p} + \tilde{\lambda}^{1-q}) \mint_{\brr}\ff(x,Du) \dx  \left [ r\left (\tx{H}^-_{\brr}\right )^{-1}\left ( \mint_{\brr} \ff(x,Du) \dx\right )\right ]^{\frac{\delta}{1+\delta}}
    \right ),
\end{align}
with $c$ as above. Taking $\lambda$ and $\tilde{\lambda}$ small enough, putting \eqref{e1}-\eqref{e3} in \eqref{3eq} and using \eqref{uv2}, we get
\begin{align} 
      \mint_{\br} \mathcal{V}^2(x,Du,Dv) \dx & \leq c\mint_{\brr}\ff(x,Du) \dx  \left [ r\left (\tx{H}^-_{\brr}\right )^{-1}\left ( \mint_{\brr} \ff(x,Du) \dx\right )\right ]^{\frac{\delta}{1+\delta}}+ cr^{\frac{p}{q}}\mint_{\brr}\ff(x,Du) \dx  \label{torec}  \\ &  \leq c\varepsilon^{\frac{\delta}{1+\delta}} \mint_{\brr}\ff(x,Du) \dx +cr^{\frac{p}{q}} \mint_{\brr}\ff(x,Du) \dx  \notag \\ & \leq c\left ( \varepsilon^{\frac{\delta}{1+\delta}} + r^{\frac{p}{q}}\right )\mint_{\brr}\ff(x,Du) \dx,\label{eq4}
\end{align}
where $c \equiv c(\tx{data}_1)$. Now, we observe that by \eqref{morrey}, \eqref{eq4} and the minimality of $v$ together with \eqref{growth}, for $\tau,\sigma \in (0,1/4)$, we have 
\begin{align} \label{sigma}
    \mint_{\tx{B}_{2\tau r}} \ff(x,Du) \dx & \leq c \left (\mint_{\tx{B}_{2\tau r}} \mathcal{V}^2(x,Du,Dv) \dx + \mint_{\tx{B}_{2\tau r}} \tx{H}(x,Dv) \dx \right ) \notag \\ & \leq c \left (  \tau^{-n}\mint_{\tx{B}_{r}} \mathcal{V}^2(x,Du,Dv) \dx + \tau^{-\sigma} \mint_{\tx{B}_{r/2}} \ff(x,Dv) \dx\right ) \notag \\ & \leq c \left ( \tau^{-n} \left ( \varepsilon^{\frac{\delta}{1+\delta}} + r^{\frac{p}{q}} \right )\mint_{\tx{B}_{2 r}} \ff(x,Du) \dx + \tau^{-\sigma} \mint_{\bb_{2r}} \ff(x,Du) \dx\right ) \notag \\ & = c \tau^{\sigma} \left ( \tau^{-n-\sigma} \left ( \varepsilon^{\frac{\delta}{1+\delta}} + r^{\frac{p}{q}} \right ) + \tau^{-2\sigma} \right )\mint_{\bb_{2r}}\ff(x,Du) \dx,
\end{align}
with $c \equiv c(\tx{data}_1)$. Note that, in order to apply \eqref{morrey}, we transformed the integral \( \int \tilde{\g}(x, D w) \dx \) into the standard form \( \int \ff(x, D w) \dx \), see \eqref{cv}. Let us now choose $\tau \in (0,1/4)$ and $\varepsilon \in (0,1)$ and $r_* \in (0,1)$ such that 
\eqn{varepsilon}
$$ c\tau^\sigma < 1/6, \ \varepsilon < \min \left \{ \tau^{(n-\sigma)(\frac{1+\delta}{\delta})} \right \}, \ r_*  <   \tau^{(n-\sigma)\frac{q}{p}}$$
note that $\tau,\varepsilon, r_* \equiv \tau, \varepsilon, r_*(\tx{data}_1,\sigma)$. Then, recalling that $r \in (0,r_*/2)$, inequality \eqref{sigma} reads as
\begin{equation}\label{nsigma}
\int_{\tx{B}_{2\tau r}} \ff(x,Du)\dx \overset{\eqref{varepsilon}}{\leq} \tau^{n-2\sigma}\int_{\brr}\ff(x,Du) \dx.
\end{equation}
Now, defining
$$ \mathcal{E}(u,\Omega) := \mint_{\Omega} \ff(x,Du) \dx,$$
we observe that
\begin{align*}
    \mathcal{E}(u,\tx{B}_{2 \tau r}) \overset{\eqref{nsigma}}{\leq} \tau^{-2\sigma}\mathcal{E}(u, \brr) \overset{\eqref{smallness2}}{<} \tau^{1-2\sigma}\tau^{-1}\ff^{-}_{\brr}\left ( \frac{\varepsilon}{2r}\right ) \overset{\eqref{suiraggi}}{\leq} \tau^{-1}\ff^{-}_{\tx{B}_{2\tau r}}\left ( \frac{\varepsilon}{2r}\right ) \leq \ff^{-}_{\tx{B}_{2\tau r}}\left ( \frac{\varepsilon}{2\tau r}\right ).
\end{align*}
Hence, if we choose $\tau,\varepsilon$ and $r_*$ as in \eqref{varepsilon}, and the smallness condition \eqref{smallness2} is satisfied on the ball $\brr$, then it is also satisfied on $\tx{B}_{2\tau r}$. This allows us to apply Step 1 on the smaller ball $\tx{B}_{2\tau r}$ in place of $\tx{B}_{2r}$, yielding
$$\mathcal{E}(u, \tx{B}_{2\tau^2 r})\leq \tau^{-2\sigma} \mathcal{E}(u, \tx{B}_{2\tau r}).$$
By iterating the argument over the family of shrinking balls $\{\tx{B}_{\tau^j r}\}$, we obtain
$$ \mathcal{E}(u,\tx{B}_{2\tau^j r}) < \ff^-_{\tx{B}_{2\tau^j r}} \left ( \frac{\varepsilon}{2\tau^j r} \right )$$
and
$$ \int_{\tx{B}_{2\tau^j r}} \ff(x,Du) \dx < \tau^{(n-2\sigma)j} \int_{\tx{B}_{2r}}\ff(x,Du) \dx,$$
for every $j \in \mathbb{N}$. Then, using a standard interpolation argument, we conclude that
\eqn{rho}
$$ \int_{\tx{B}_{\rr}(\tilde{x})} \ff(x,Du) \dx \leq c \left ( \frac{\rr}{r}\right )^{n-2\sigma} \int_{\tx{B}_{2r}(\tilde{x})} \ff(x,Du) \dx,$$
for all $\rr \leq 2r$, with $c$ as above. Although the inequality above was obtained assuming $\sigma \in (0,1/4)$, it can be readily extended to hold for all $\sigma \in (0,1)$. Now, 
we observe that the following functions 
$$ \tilde{x} \mapsto \mint_{\bb_{2r}(\tilde{x})}\ff(x,Du) \dx \qquad\text{  and  } \qquad\tilde{x} \mapsto\ff^-_{\brr(\tilde{x})}\left (\frac{\varepsilon}{2r}\right ),$$ 
are continuous. This is due to the absolute continuity of the integral and to the observations made in Section \ref{properties}. As a result, once \( \sigma \) is fixed, if condition \eqref{smallness2} is satisfied at some point \( \tilde{x} \in \Omega \), then there exists a ball \( \tx{B}_{4r_{\tilde{x}}}(\tilde{x}) \) such that, if \( y \in \tx{B}_{4r_{\tilde{x}}}(\tilde{x}) \), one has
\eqn{f5}
$$
\mint_{\tx{B}_{2r}(y)} \ff(x, Du) \dx < \ff^-_{\tx{B}_{2r}(y)}\left ( \frac{\varepsilon}{2r}\right ).
$$
This implies that inequality \eqref{rho} holds with \( y \) replacing \( \tilde{x} \), and with the same constant \( c \), for all \( y \in \tx{B}_{4r_{\tilde{x}}}(\tilde{x}) \).
Now, let us fix $(0,1)\ni\gamma=1-2\sigma/p$, with $\sigma\in(0,1)$. Let $2\tilde{r}_{\tilde{x}}$ be the largest radius such that condition  \eqref{smallness2} is satisfied with $r \equiv \tilde{r}_{\tilde{x}}$, $\tx{B}_{4\tilde{r}_{\tilde{x}}}(\tilde{x}) \Subset \Omega, \tilde{r}_{\tilde{x}} < r_*/4$. Then, by \eqref{rho} and \eqref{asteriscoNotes}, we have that
\begin{align*}
    \int_{\tx{B}_\rr (\tilde{x})} \ff(x,Du) \dx & \leq c \left ( \frac{\rr}{\tilde{r}_{\tilde{x}}}\right )^{n-p+p\gamma} \int_{\tx{B}_{2\tilde{r}_{\tilde{x}}}(\tilde{x})} \ff(x,Du)\dx \\ & \leq c \rr^{n-p+p\gamma} \left ( (\tilde{r}_{\tilde{x}})^{p-p\gamma} \mint_{\tx{B}_{4\tilde{r}_{\tilde{x}}}(\tilde{x})} \frac{|u|^p}{ (\tilde{r}_{\tilde{x}})^{p}} + \nr{a}_{L^\infty} \frac{|u|^q}{ (\tilde{r}_{\tilde{x}})^{q}} \dx\right ),
\end{align*}
and so, by Poincaré's inequality, we get
$$ \mint_{\tx{B}_{\rr}(\tilde{x})} |u-(u)_{\tx{B}_\rr}|^p \dx \leq \frac{c}{(\tilde{r}_{\tilde{x}})^{q-p+p\gamma}}\rr^{p\gamma},$$
with $c\equiv c(\tx{data}_1,\nr{a}_{L^{\infty}},\gamma)$. This estimate remains stable when $\tilde{x}$ is substituted by any $y \in \tx{B}_{4r_{\tilde{x}}}(\tilde{x})$, as established in \eqref{f5}. Consequently, applying a standard integral characterization of H\"older continuity, we deduce that for every $\gamma \in (0,1)$, the following holds 
\begin{equation} \label{holdu}
[u]_{0,\gamma,\tx{B}_{2r_{\tilde{x}}}(\tilde{x})} \leq \frac{c}{ (\tilde{r}_{\tilde{x}})^{q/p - 1 + \gamma}}, 
\end{equation}
with $c$ as above. This ends Step 1.
\newline \noindent \textbf{Step 2:} dimension of the singular set. \newline 
Let us define
$$ \Tilde{\Omega} := \left \{ \tilde{x} \in \Omega: \exists \  \tx{B}_{r_{\tilde{x}}}(\tilde{x}) \subset \Omega \text{ such that } u \in C^{0,\gamma}(\tx{B}_{r_{\tilde{x}}}(\tilde{x})), \text{ for some } \gamma \in (0,1)  \right \},$$
and let $\Tilde{\Sigma}$ be the singular set, i.e.
$$ \Tilde{\Sigma} := \Omega \setminus \Tilde{\Omega}.$$
Then, we observe that by what proved in Step 1, we have
$$ \Tilde{\Sigma} \subset \left \{\tilde{x} \in \Omega: \limsup_{\rr \to 0} \left [\ff^{-}_{\tx{B}_{\rr}(\tilde{x})}\left ( \frac{1}{\rr}\right ) \right ]^{-1} \mint_{\tx{B}_\rr(\tilde{x})} \ff(x,Du) \dx >0 \right  \}.$$
Now, using Lemma~\ref{lemmaHI} and the fact that 
\(\tx{H}^{-}_{{\tx{B}}_\varrho(\tilde{x})}(1/\varrho) \geq \varrho^{-p}\), we obtain
\eqn{ss}
$$
\Tilde{\Sigma} \subset \left\{ \tilde{x} \in \Omega : \limsup_{\varrho \to 0} \varrho^{p(1+\delta)-n} \int_{{\tx{B}}_\varrho(\tilde{x})} \tx{H}(x, Du)^{1+\delta} \dx > 0\right\}.
$$
Recalling that we are dealing with $p(1+\delta)\leq n$, by \cite[Proposition 2.7]{giusti}, we have
\eqn{hp}
$$\dim_{\mathscr{H}}(\Tilde{\Sigma}) \leq n - p -p\delta \quad \text{and so} \quad \mathscr{H}^{n-p}(\Tilde{\Sigma}) = 0.$$ In particular, $\snr{\Tilde{\Sigma}}=0$. In the following remark we give a precise description of the singular set $\tilde{\Sigma}$ and of $\tilde{\Omega}$.
\begin{remark} \label{rm3.1}
\normalfont
We observe that the singular set $\tilde{\Sigma} = \Omega \setminus \tilde{\Omega}$ takes exactly the form 
\eqn{ssigma}
$$\left \{ \tilde{x} \in \Omega: \limsup_{\rr \to 0} \left [\ff^{-}_{\tx{B}_{\rr}(\tilde{x})}\left ( \frac{1}{\rr}\right ) \right ]^{-1} \mint_{\tx{B}_\rr(\tilde{x})} \ff(x,Du) \dx >0\right \}.$$
Indeed, calling $\Sigma$ the set \eqref{ssigma}, from Step 1  it follows that $\tilde{\Sigma} \subset \Sigma$.  In order to prove $\Sigma\subset \tilde{\Sigma}$, we take by contradiction $\tilde{x} \in \Sigma \setminus \tilde{\Sigma}$. Then, there exists \(\gamma \in (0,1)\) such that \( u \in C^{0,\gamma}(\tx{B}_{r_{\tilde{x}}}(\tilde{x})) \) for some \( \tx{B}_{r_{\tilde{x}}}(\tilde{x}) \). Now, applying \eqref{caccioppoli2} on \(\varrho \leq r_{\tilde{x}}/2\) and \eqref{suiraggi}, we obtain:
\eqn{rm}
$$
\mint_{\tx{B}_\varrho(\tilde{x})} \tx{H}(x, Du) \dx \leq c \mint_{\tx{B}_{2\varrho(\tilde{x})}} \tx{H}^{-}_{\tx{B}_{2\varrho(\tilde{x})}}\left( \frac{u - (u)_{\tx{B}_{2\varrho}(\tilde{x})}}{2\varrho} \right) \dx \leq c \tx{H}^{-}_{\tx{B}_{2\varrho}(\tilde{x})}(\varrho^{\gamma-1})\leq c \tx{H}^{-}_{\tx{B}_{\varrho}(\tilde{x})}(\varrho^{\gamma-1}),
$$
and so
\eqn{so}
$$
\left[ \tx{H}^{-}_{\tx{B}_\varrho(\tilde{x})}\left( \frac{1}{\varrho} \right) \right]^{-1} \mint_{\tx{B}_\varrho(\tilde{x})} \tx{H}(x, Du) \, \dx \leq c \varrho^{\gamma p},
$$
where \( c \equiv c(\tx{data},[u]_{0,\gamma}) \). Taking the limit for \(\varrho \to 0\), it follows that \( \tilde{x} \notin \Sigma \), a contradiction. Hence, $\Tilde{\Sigma} = \Sigma$. Now, we observe that actually we have
\eqn{pdo}
$$ \tilde{\Omega} := \left \{ \tilde{x} \in \Omega: \exists \  \tx{B}_{r_{\tilde{x}}}(\tilde{x}) \subset \Omega \text{ such that } u \in C^{0,\gamma}(\tx{B}_{r_{\tilde{x}}}(\tilde{x})), \text{ for all } \gamma \in (0,1)   \right \}.$$
Indeed, calling $\hat{\Omega}$ the set in the right-hand side of \eqref{pdo}, by Step 1 it holds $\hat{\Omega} \subset \tilde{\Omega}$. Taking $\tilde{x} \in \tilde{\Omega}$ then, there exists $\tx{B}_{r_{\tilde{x}}}(\tilde{x}) \subset \Omega$ such that $u \in C^{0,\gamma_1}(\tx{B}_{r_{\tilde{x}}}(\tilde{x}))$ for some $\gamma_1 <1$, therefore \eqref{so} holds with $\gamma_1$ in place of $\gamma$. Now, fix $\gamma$ and take $\rr$ small enough in \eqref{so} so that \eqref{smallness2} is satisfied for an $\varepsilon \equiv \varepsilon(\tx{data},\gamma)$. Hence, $u$ is $\gamma$-H\"older continuous in a neighbourhood of $\tilde{x}$. Since $\gamma$ is arbitrarily it follows that $\tilde{x} \in \hat{\Omega}$ and then $\tilde{\Omega}=\hat{\Omega}$, so \eqref{pdo} is proved. 
\end{remark}
\noindent
\noindent\textbf{Step 3:} partial H\"older regularity of the gradient. \newline
So far, we have shown that for every open subset $\Omega_0 \Subset \Tilde{\Omega} $ and every $\gamma \in (0,1)$ the minimizer $u \in C^{0,\gamma}(\Omega_0, \overline{\mm})$, as a consequence of a standard covering argument. Therefore, for all $\tx{B}_{4\rr} \Subset \Omega_0$, with $\rr\leq 1/64$, by \eqref{caccioppoli2} and \eqref{suiraggi}, for all $\gamma \in (0,1)$, it holds
\eqn{rogamma}
$$ \mint_{\tx{B}_{2\rr}} \ff(x,Du) \dx \leq c \mint_{\tx{B}_{4\rr}} \ff^{-}_{\tx{B}_{4\varrho}}\left(\frac{u-(u)_{\bb_{4\rr}}}{4\rr}\right) \dx \leq c \tx{H}^{-}_{\tx{B}_{2\varrho}}(\varrho^{\gamma-1})\leq c \tx{H}^{-}_{\tx{B}_{\varrho}}(\varrho^{\gamma-1}),$$
with $c\equiv c(\tx{data},\gamma,\Omega_0)$, and so,
\eqn{2rhogamma}
$$ \mint_{\tx{B}_{2\rr}} \ff(x,Du) \leq c\rr^{(\gamma-1)q},$$
for  \( c \equiv c( \tx{data},\nr{a}_{L^{\infty}},\gamma,\Omega_0) \). Now, fix a ball $\tx{B}_{4s\frac{\sqrt{L}}{\sqrt{\ell}}} \equiv \tx{B}_{4s\frac{\sqrt{L}}{\sqrt{\ell}}}(\tilde{x})$, with $\max\{s,s/\sqrt{\ell}\} \leq 1/64$, such that
\eqn{s0}
$$\tx{B}_{4s\frac{\sqrt{L}}{\sqrt{\ell}}} \Subset \Omega_0.$$
 Let $v \in u+W_0^{1,\ff}(\tx{B}_{s}, \R^N)$ be the solution of the Dirichlet problem \eqref{dirichlet}, with $\br =   \tx{B}_s \equiv \tx{B}_s(\tilde{x})$. Now, we recall that  since $\tilde{g}, \tilde{B}$ in \eqref{ng} are constant, symmetric, uniformly elliptic  matrices, the functional $\int_{\tx{B}_{s}}\tilde{\g}(x,Dw) \dx$ can be trasformed, up to constants, into the following
\eqn{newG}
$$ \int_{\tx{E}} (|D\tilde{w}|^p + \tilde{a}(y)|D\tilde{w}|^q) \dy,$$
where $\tx{E}$ is the ellipsoid obtained from $\tx{B}_{s}$ after  the coordinate change. More precisely, the transformation is given by
\eqn{cv}
$$
A := \tilde{g}^{-1/2} \in \mathbb{R}^{n \times n}, \quad C := \tilde{B}^{-1/2} \in \mathbb{R}^{N \times N}, \quad y := A x, \quad \tilde{w}(y) := C^{-1} w(A^{-1} y), \quad \tilde{a}(y) :=  a(A^{-1} y).
$$
Under this change of variables, the ellipsoid is \( \tx{E} := A(\tx{B}_s) \subset \mathbb{R}^n \), and satisfies
\[
\tx{B}_{\frac{s}{\sqrt{L}}} \subset \tx{E} \subset \tx{B}_{\frac{s}{\sqrt{\ell}}}.
\]
We also call $\tilde{u}$ and $\tilde{v}$ the new functions obtained from $u$ and $v$, respectively, and denote by $\Tilde{\Omega}_0$ the domain obtained from $\Omega_0$. Let ${\tx{B}}_{\tilde{s}} \equiv \tx{B}_{\frac{s}{\sqrt{L}}}$; from now on we keep denoting the variable of integration with $x$ and the weight $\tilde{a}$ with $a$. We observe that since $v \in u + W^{1,\tx{H}}_0(\tx{B}_{s}, \R^N)$ is a solution of the Dirichlet problem \eqref{dirichlet} then the minimizer  $\tilde{v} \in \tilde{u} + W_0^{1,\tx{H}}(\tx{E}, \R^N)$ of \eqref{newG}, satisfies
\eqn{el2}
$$ \int_{\tx{B}_{\tilde{s}}} \partial_z \tx{H} (x,D\tilde{v}) \cdot D\varphi \dx =0,$$
for all $\varphi \in C^{\infty}_c(\tx{B}_{\tilde{s}}, \R^N)$. Now, let us select $x_0 \in \overline{\tx{B}}_{\tilde{s}}$ such that
$$ a(x_0) = \inf_{x \in \tx{B}_{\tilde{s}}} a(x).$$
We want to show that $\tv$ fits the assumption of Lemma \ref{approximation} with the choice of $\tx{H}_0(\cdot)= \ff^-_{\tx{B}_{\tilde{s}}}(\cdot)$. Of course,
$$ \int_{\tx{B}_{\tilde{s}}} \tx{H}^-_{\tx{B}_{\tilde{s}}}(D\tv) \dx < +\infty,$$
moreover, looking at the proof of \eqref{hi}, we see that assumption \eqref{hiapprox} is satisfied since $\tv$ is a minimizer of \eqref{newG}. Now, it remains to show \eqref{secondaipotesi}. The analysis requires distinguishing two cases based on the behavior of the coefficient $a(\cdot)$. The $p$-phase occurs when
\eqn{pphase}
$$
a(x_0) \leq 4[a]_{0,\alpha}{\tilde{s}}^{\alpha-t}
$$
where $t := \alpha + (\gamma-1)(q-p) > 0$ by \eqref{pq}. The $(p,q)$-phase emerges in the complementary case
\eqn{pqphase}
$$
a(x_0) > 4[a]_{0,\alpha}{{\tilde{s}}}^{\alpha-t}.
$$
Then, it follows that 
\begin{equation}
\label{eq:phase_bounds}
\displaystyle \begin{cases}
\displaystyle\sup_{\tx{B}_{\tilde{s}}} a(x) \leq 6[a]_{0,\alpha}{{\tilde{s}}}^{\alpha-t} & \text{in the } p\text{-phase} \\
\displaystyle \sup_{\tx{B}_{\tilde{s}}} a(x) \leq \frac{3}{2}a(x_0) & \text{in the } (p,q)\text{-phase}.
\end{cases}
\end{equation}
Note that the two phases depend on the number $\gamma \in (0,1)$ that we will choose later. Now, for all $\varphi \in C^{\infty}_c(\tx{B}_{{\tilde{s}}/2},\R^N)$, it holds
\begin{align} \label{i0}
\left | \mint_{\tx{B}_{{\tilde{s}}/2} } \partial_z \ff^-_{\tx{B}_{\tilde{s}}}(D\tv) \cdot D\varphi \dx \right | &\mathrel{\eqmathbox{\overset{\mathrm{\eqref{el2}}}{=}}} \left |\mint_{\tx{B}_{{\tilde{s}}/2}} \left [ \partial_z \ff^-_{\tx{B}_{\tilde{s}}} (D\tv) - \partial_z \tx{H}(x,D\tv)\right ] \cdot D\varphi \dx \right | \notag \\ & \mathrel{\eqmathbox{\overset{\mathrm{}}{\leq}}}  c[a]_{0,\alpha}{{\tilde{s}}}^{\alpha} \nr{D\varphi}_{L^{\infty}(\tx{B}_{\tilde{s}/2})} \mint_{\tx{B}_{{\tilde{s}}/2}}|D\tv|^{q-1} \dx =: \Tilde{\tx{I}}.
\end{align}
In order to estimate $\tilde{\tx{I}}$ we have to distinguish between the the $p$-phase \eqref{pphase} and the the $(p,q)$-phase \eqref{pqphase}. We first observe that by the choice made in  \eqref{s0}, it holds $\tx{B}_{\frac{4s}{\sqrt{\ell}}} \Subset \tilde{\Omega}_0$ and \eqref{rogamma} holds true for $\tilde{u}$ in place of $u$ and $\rr=s/\sqrt{\ell}$.
Now, when we are in \eqref{pphase}, using the minimality of $\tv$, \eqref{rogamma} and the definition of $t$, it holds
\eqn{ded}
$$ \mint_{\tx{B}_{\tilde{s}}} \tx{H}(x,D\tv) \dx \leq \mint_{\tx{E}}\tx{H}(x,D\tv) \dx \leq \mint_{\tx{E}} \tx{H}(x,D\tu) \dx\leq  c  
\mint_{\tx{B}_{\frac{s}{\sqrt{\ell}}}} \tx{H}(x,D\tu) \dx  \leq c  {s}^{(\gamma-1)p},$$
with $c \equiv c(\tx{data}, \nr{a}_{L^{\infty}},\gamma,\Omega_0).$ Then,  by H\"older's inequality, applying the definition of $t$, \eqref{ded}, and recalling that $\tilde{s}=s/\sqrt{L}$, we have
\begin{align} \label{i1}
\tilde{\tx{I}} \leq c{{\tilde{s}}}^t 	{\tilde{s}}^{\alpha-t}\nr{D\varphi}_{L^{\infty}(\tx{B}_{{\tilde{s}}/2})} \left ( \mint_{\tx{B}_{{\tilde{s}}/2}} |D\tv|^p \dx \right )^{\frac{q-p}{p}}\left ( \mint_{\tx{B}_{{\tilde{s}}/2}} |D\tv|^p \dx\right )^{\frac{p-1}{p}} \leq c {\tilde{s}}^t \mint_{\tx{B}_{\tilde{s}}}\left ( |D\tv|^p + \nr{D\varphi}_{L^{\infty}(\tx{B}_{{\tilde{s}}/2})}^p\right ) \dx,
\end{align}
note that we have used the fact that $q<p+1$, with $c$ as above. While in the $(p,q)$-phase \eqref{pqphase}, by Young's inequality, we have
\begin{align} \label{i2}
\tilde{\tx{I}} &= c[a]_{0,\alpha} {\tilde{s}}^{\alpha-t}{\tilde{s}}^t \nr{D\varphi}_{L^{\infty}(\tx{B}_{{\tilde{s}}/2})} \mint_{\tx{B}_{{\tilde{s}}/2}}|D\tv|^{q-1}  \dx \notag \\ & \leq c{{\tilde{s}}}^t\nr{D\varphi}_{L^{\infty}(\tx{B}_{{\tilde{s}}/2})}[a(x_0)]^{\frac{1}{q}}\mint_{\tx{B}_{\tilde{s}}}[a(x_0)]^{\frac{q-1}{q}}|D\tv|^{q-1} \dx \notag \\ & \leq c{{\tilde{s}}}^t \mint_{\tx{B}_{\tilde{s}}} \left ( a(x_0)|D\tv|^q +a(x_0)\nr{D\varphi}_{L^{\infty}(\tx{B}_{{\tilde{s}}/2})}^q\right ) \dx,
\end{align}
with $c \equiv c(n,N,\ell,L,p,q,\alpha,[a]_{0,\alpha})$. Then, using \eqref{i1} and \eqref{i2} in \eqref{i0}, we get
$$ \left | \mint_{\tx{B}_{{\tilde{s}}/2} } \partial_z \ff^-_{\tx{B}_{\tilde{s}}}(D\tv) \cdot D\varphi \dx  \right | \leq c {\tilde{s}}^t \mint_{\tx{B}_{\tilde{s}}} \left ( \tx{H}^-_{\tx{B}_{\tilde{s}}}(D\tv)+\tx{H}^-_{\tx{B}_{\tilde{s}}}(\nr{D\varphi}_{L^{\infty}(\tx{B}_{{\tilde{s}}/2})}) \right ) \dx,$$
with $c \equiv c(\tx{data},\nr{a}_{L^{\infty}},\gamma,\Omega_0)$. So, we are in position to apply Lemma \ref{approximation}: there exists $\tilde{h} \in \tv +W^{1,\tx{H}^{-}_{\tx{B}_{\tilde{s}}}}_0(\tx{B}_{{\tilde{s}}/2} , \R^N)$, minimizer of
\eqn{th}
$$\tv +W^{1,\tx{H}^{-}_{\tx{B}_{\tilde{s}}}}_0(\tx{B}_{{\tilde{s}}/2} , \R^N) \ni w \mapsto \int_{\tx{B}_{{\tilde{s}}/2} }\ff^-_{\tx{B}_{\tilde{s}}}(Dw) \dx,$$
satisfying
\eqn{vh}
$$ \mint_{\tx{B}_{{\tilde{s}}/2} }\mathcal{V}^2(D\tv, D\tilde{h}, \tx{B}_{\tilde{s}}) \dx \leq c{{\tilde{s}}}^m \mint_{\tx{B}_{\tilde{s}}}\ff^-_{\tx{B}_{\tilde{s}}}(D\tv) \dx\leq c{{\tilde{s}}}^m \mint_{\tx{B}_{\tilde{s}}} \tx{H}(x,D\tv) \dx \leq  c{{\tilde{s}}}^m 
\mint_{\tx{B}_{\frac{s}{\sqrt{\ell}}}} \tx{H}(x,D\tu) \dx \leq c{s}^{m+(\gamma-1)q},$$
where for the last inequality we used \eqref{2rhogamma} that holds true  for $\tilde{u}$ in place of $u$ and $\rr=s/\sqrt{\ell}$ (recall that $\tx{B}_{\frac{4s}{\sqrt{\ell}}}  \Subset \tilde{\Omega}_0$ by \eqref{s0}); with $m \equiv m(n,N,\ell,L,p,q,\alpha)$ and $c \equiv c(\tx{data},\nr{a}_{L^{\infty}},\gamma,\Omega_0)$. Now, since $\tilde{h}$ minimizes \eqref{th}, it holds
\eqn{r1}
$$ \sup_{\tx{B}_{\tilde{s}/4}} \ff^-_{_{\tx{B}_{\tilde{s}}}} (D\tilde{h}) \leq c \mint_{\tx{B}_{\tilde{s}/2}} \ff^-_{_{\tx{B}_{\tilde{s}}}} (D\tilde{h}) \dx,$$
for a constant $c \equiv c(n,N,\ell,L,p,q)$, see \cite[Lemma 5.8]{DSV}. Now, recalling the definitions \eqref{V1} and \cite[(1.3)]{DSV}, we can apply \cite[Theorem 6.4]{DSV} that gives
\begin{align} \label{r2}
    & \mint_{\tx{B}_\rr}\left (|V_p(D\tilde{h})-(V_p(D\tilde{h}))_{\tx{B}_\rr}|^2 + \inf_{\tx{B}_{\tilde{s}}}a(x)|V_q(D\tilde{h})-(V_q(D\tilde{h}))_{\tx{B}_\rr}|^2\right) \dx \notag \\ & \qquad \leq c\left (\frac{\rr}{\tilde{s}}\right )^{2\mu} \mint_{\tx{B}_{\tilde{s}/2}} \left (|V_p(D\tilde{h})-(V_p(D\tilde{h}))_{\tx{B}_{\tilde{s}/2}}|^2 + \inf_{\tx{B}_{\tilde{s}}}a(x)|V_q(D\tilde h)-(V_q(D\tilde{h})_{\tx{B}_{\tilde{s}/2}}|^2\right) \dx, 
\end{align}
for $1 \leq c(n,N,p,q,\ell,L)$, $(0,1/2) \ni \mu \equiv \mu(n,N,p,q,\ell,L)$ and whenever $\tx{B}_\rr \subset \tx{B}_{\tilde{s}/2}$ is concentric to $ \tx{B}_{\tilde{s}/2}$. Combining \eqref{r1} and \eqref{r2} with \eqref{v} and following \cite[Theorem 3.1]{BCM1}, it holds
\eqn{forh}
$$\mint_{\tx{B}_{\rr}} \ff^-_{\tx{B}_{\tilde{s}}}(D\tilde{h} -(D\tilde{h})_{\bb_{\rr}}) \dx \leq c \left ( \frac{\rr}{{\tilde{s}}}\right )^\mu \mint_{\tx{B}_{\tilde{s}/2}} \tx{H}^{-}_{\tx{B}_{\tilde{s}}}(D\tilde{h}) \dx  \leq c \left ( \frac{\rr}{{\tilde{s}}}\right )^\mu \mint_{\tx{B}_{\tilde{s}}} \ff(x,D\tilde{u}) \dx \overset{\eqref{2rhogamma}}{\leq} c \left ( \frac{\rr}{{\tilde{s}}}\right )^\mu \tilde{s}^{(\gamma -1)q},$$
for  $c \equiv c(\tx{data},\nr{a}_{L^{\infty}},\gamma,\Omega_0)$ and $\mu$ as above. Now, by \eqref{torec}, we have
\begin{align*}
  \mint_{\tx{B}_s} \mathcal{V}^2(x,Du, Dv)\dx & \mathrel{\eqmathbox{\overset{\mathrm{}}{\leq}}} c \mint_{\tx{B}_{2s}}\ff(x,Du) \dx\left ( s(\ff^-_{\tx{B}_{2s}})^{-1}\left (\mint_{\tx{B}_{2s}}\ff(x,Du) \dx\right ) \right )^{\frac{\delta}{1+\delta}} \\ & \qquad \mathrel{\eqmathbox{\overset{\mathrm{}}{+}}}cs^{\frac{p}{q}}\mint_{\tx{B}_{2s}}\ff(x,Du) \dx   \\ & \mathrel{\eqmathbox{\overset{\mathrm{\eqref{rogamma} \eqref{2rhogamma}}}{\leq}}} c
 s^{(\gamma-1)q}\left (s(\ff^-_{\tx{B}_{2s}})^{-1}\left (\ff^-_{\tx{B}_{2s}}(s^{\gamma-1}) \right)\right)^{\frac{\delta}{1+\delta}} + cs^{(\gamma-1)q+\frac{p}{q}}  \\ & \mathrel{\eqmathbox{\overset{\mathrm{}}{\leq}}} c  s^{(\gamma-1)q}  s^{\gamma\frac{\delta}{1+\delta}} +cs^{\gamma q-q+\frac{p}{q}} \\ & \mathrel{\eqmathbox{\overset{\mathrm{}}{\leq}}} c\left(s^{\gamma\left ( \frac{\delta}{1+\delta} +q\right )-q}+s^{\gamma q-q+\frac{p}{q}}\right),
\end{align*}
with $c \equiv c(\tx{data}_1,\nr{a}_{L^{\infty}},\gamma,\Omega_0)$. One can easily verify that, applying the change of coordinates \eqref{cv}, it holds
\eqn{newV}
$$ \mint_{\tx{B}_{\tilde{s}}} \mathcal{V}^2(x,D\tu,D\tv) \dx \leq c\left( s^{\gamma\left ( \frac{\delta}{1+\delta} +q\right )-q}+ s^{\gamma q-q+\frac{p}{q}}\right).$$ Therefore, collecting  \eqref{vh}, \eqref{forh}, \eqref{newV}, recalling that $\tilde{s}= s/\sqrt{L}$, we estimate
\begin{align} \label{stimasuu}
    \mint_{\tx{B}_\rr}|D\tu -(D\tu)_{\bb_\rr}|^p \dx &\leq c\mint_{\tx{B}_\rr} \mathcal{V}^2(D\tv, D\tilde{h}, \tx{B}_\rr) \dx + c\mint_{\tx{B}_\rr} \mathcal{V}^2(x,D\tu, D\tilde{v})+ c\mint_{\tx{B}_{\rr}} \ff^-_{\tx{B}_{ \tilde{s}}}(D\tilde{h} -(D\tilde{h})_{\bb_\rr}) \dx \notag \\ & \leq c\left [\left( \frac{ \tilde{s}}{\rr}\right)^n{{ \tilde{s}}}^{q\gamma+m-q }+\left( \frac{ \tilde{s}}{\rr}\right)^n\left ( \tilde{s}^{\gamma\left ( \frac{\delta}{1+\delta} +q\right )-q}+ \tilde{s}^{\gamma q-q+\frac{p}{q}} \right ) +\left ( \frac{\rr}{{ \tilde{s}}}\right )^\mu  \tilde{s}^{(\gamma -1)q} \right ], 
\end{align}
with $c$ as above. Now, we select $\gamma$ such that
\eqn{sgamma}
$$ \gamma > \max \left \{\frac{q}{q+\frac{\delta}{1+\delta}}, 1-\frac{p}{q^2}, \frac{q-m}{q} \right \}, $$
and let
$$ \nu:= \min \left \{ m+(\gamma-1)q, \gamma \left ( \frac{\delta}{1+\delta}+q\right )-q,\gamma q-q+\frac{p}{q}  \right \}.$$
Note that $0<\nu<1$. Then \eqref{stimasuu} turns into,
\eqn{ql}
$$  \mint_{\tx{B}_\rr}|D\tu -(D\tu)_{\bb_\rr}|^p \dx \leq c\left [ \left( \frac{ \tilde{s}}{\rr}\right)^n  \tilde{s}^\nu + \left ( \frac{\rr}{{ \tilde{s}}}\right )^\mu  \tilde{s}^{(\gamma -1)q}\right].$$
Now, we take
$$ \gamma \geq 1-\frac{\nu\mu}{2nq},$$
and
$$ \rr:=  \frac{ \tilde{s}^{1+\theta}}{8} \quad \text{ with } \quad \theta := \frac{(1-\gamma)q + \nu}{\mu +n},$$
therefore \eqref{ql} reads as
\eqn{last}
$$  \mint_{\tx{B}_\rr}|D\tu -(D\tu)_{\tx{B}_\rr}|^p \dx \leq c \rr^{\frac{\nu -\theta n}{1+\theta}} \leq c\rr^{\beta p}, \qquad \beta :=\frac{\nu \mu}{4p(n+\mu)},$$
with $c \equiv c(\tx{data}_1,\nr{a}_{L^{\infty}},\gamma,\Omega_0)$. Hence, since $\Omega_0$ is arbitrary, by the characterization of H\"older continuity of Campanato and Meyers and by a standard covering argument, it follows that $ D\tilde{u} \in C^{0,\beta}_{\loc}(A(\tilde{\Omega}), \R^{N\times n})$, with $\beta \equiv \beta(\tx{data})$ as in \eqref{last}. Then, applying the inverse of the trasformation in \eqref{cv}, we conclude that $Du \in C^{0,\beta}_{\loc}(\tilde{\Omega} ,\R^{N\times n})$. This ends Step 3 in the case $p(1+\delta) \leq n$.

For \(p(1 + \delta) > n\), the singular set $\tilde{\Sigma}$ turns out to be empty, that is, \(\tilde{\Sigma} = \tilde{\Omega} = \Omega\), as the right-hand side of \eqref{ss} is the empty set. From Step 1 and Step 2 we then deduce that \(u \in C^{0,\gamma}_{\loc}(\Omega,\overline{ \mm})\) for every \(\gamma < 1\), and the conclusion concerning the local  H\"older continuity of \(Du\) follows exactly as in the case \(p(1 + \delta) \leq n\). Then \eqref{stat} is completely proved.
\end{proof}
\noindent
Let us now turn to the proof of Theorem \ref{maintheorem2}. 
\begin{proof}[Proof of Theorem \ref{maintheorem2}] We recall that now $\Omega$ is a bounded open subset of an $n$-dimensional Riemannian manifold. Choosing local coordinates on $\Omega$, the functional $\mathcal{H}$ in \eqref{functional} can be expressed as
\eqn{funpre}
$$
\int \left(h_{\alpha \beta} D_\alpha u \cdot D_\beta u\right)^{p/2} + \tilde{a}(x)\left(h_{\alpha \beta} D_\alpha u \cdot D_\beta u\right)^{q/2} \, \dd x,
$$
with $\tilde{a} \in C^{0,\alpha}$ and coefficients $h_{\alpha \beta}$ (independent on variable $u$) satisfying the same assumptions as $g_{\alpha \beta}$ in \eqref{ellipticity}, with $\ell = \ell_h$. Then, using equation~\eqref{euler2} with this metric on $\Omega$, the proof follows directly from the arguments already developed for Theorem~\ref{maintheorem}. Note that Lemma \ref{CaccioppoliLemma} and \ref{lemmaHI} apply also in the case of a manifold $\mm$ as in \ref{item:b} or \ref{item:c}.
\end{proof}
\noindent
We now state the version corresponding to manifolds as in \ref{item:c}, whose validity is an immediate consequence of Theorem \ref{maintheorem2} since the Euler-Lagrange equation \eqref{euler2} is the same but without  the boundary term.
\begin{theorem} \label{maintheorem3}
Let $\Omega$ be a bounded open subset of a $n$-dimensional Riemannian manifold and let $u \in W^{1,1}_{\loc}(\Omega, {\mm})$ be a local minimizer of the functional $\mathcal{H}$ in \eqref{functional} under assumption \eqref{pq}, with $\mm$ as in \ref{item:c} and satisfying \eqref{M}. Then, there exists an open subset $\Tilde{\Omega} \subset \Omega$ such that
\eqref{stat} holds true,
     for some $\beta \equiv \beta(\tx{data}) \in (0,1)$. 
\end{theorem}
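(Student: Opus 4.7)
The plan is to derive Theorem \ref{maintheorem3} as an essentially direct corollary of Theorem \ref{maintheorem2}, exploiting the single structural difference between cases \ref{item:b} and \ref{item:c}: for a compact submanifold $\mm$ without boundary, the set $\{u \in \partial\mm\}$ is empty. Hence, when one runs through the derivation culminating in \eqref{2.8}, the Radon measure $\lambda$ produced by the first variation is automatically the zero measure, and the Euler-Lagrange system collapses to
\begin{equation*}
\int_{\Omega}(p|du|^{p-2}+qa(x)|du|^{q-2})\bigl(du^i\cdot d\varphi^i + du^i\cdot du^k D^2_{lk}\Pi^i(u)\varphi^l\bigr)\,\dd\mathscr{H}^n = 0,
\end{equation*}
for every $\varphi \in (W^{1,\tx{H}}_0\cap L^{\infty})(\Omega,\R^N)$, where $\Pi$ denotes the smooth nearest-point retraction onto $\mm$ defined on a tubular neighborhood of $\mm$ in $\R^N$.

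Having obtained this identity, I would replay verbatim the three-step scheme used in the proof of Theorem \ref{maintheorem2}. In Step 1 the comparison between $u$ and the minimizer $v$ of the frozen Dirichlet problem \eqref{dirichlet}, combined with the Caccioppoli inequalities \eqref{asteriscoNotes}--\eqref{caccioppoli3}, the intrinsic Sobolev-Poincar\'e inequality of Lemma \ref{IS-Pineq} and the higher integrability of Lemma \ref{lemmaHI}, yields local H\"older continuity of $u$ with every exponent $\gamma\in(0,1)$ on the regular set. Step 2 characterizes $\tilde{\Omega}$ through \eqref{pdo} and bounds the Hausdorff dimension of $\tilde{\Sigma}$ via \eqref{ss}. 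Step 3 invokes the harmonic approximation Lemma \ref{approximation} together with the Morrey decay \eqref{morrey} and the Campanato--Meyers characterization to upgrade $Du$ to $C^{0,\beta}_{\loc}(\tilde{\Omega},\R^{N\times n})$, with the same exponent $\beta\equiv \beta(\tx{data})$ as in Theorem \ref{maintheorem2}.

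The only point that deserves explicit inspection is the applicability of the finite-energy extension Lemma \ref{extensiontheorem} with $\tilde{\mm}=\mm$. Since $\mm$ is a compact submanifold of $\R^N$ without boundary, it has positive reach and admits a Lipschitz nearest-point projection on a tubular neighborhood; moreover, the hypothesis \eqref{M} supplies exactly the $[q]$-connectedness needed to invoke Lemma \ref{lem1} and build the retraction $\Psi$ with the integrability estimate \eqref{(3.5)Notes}. Consequently the Caccioppoli inequalities of Lemma \ref{CaccioppoliLemma} remain valid in the present setting, and the remainder of the argument goes through unchanged. No genuinely hard step arises: the proof is a strict simplification of that of Theorem \ref{maintheorem2}, the simplification being precisely the vanishing of the boundary contribution in the Euler-Lagrange equation, and it produces the same exponent $\beta$ and the same structural description of $\tilde{\Sigma}$ as in the boundaryful case.
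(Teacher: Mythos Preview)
Your proposal is correct and follows precisely the paper's approach: the paper states that Theorem \ref{maintheorem3} is an immediate consequence of Theorem \ref{maintheorem2} since the Euler--Lagrange equation \eqref{euler2} is the same but without the boundary term, and it has already remarked (at the end of the proof of Theorem \ref{maintheorem2}) that Lemmas \ref{CaccioppoliLemma} and \ref{lemmaHI} apply in case \ref{item:c}. Your write-up simply expands on these points with more detail than the paper provides.
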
 
\noindent
The following corollary is immediate.
\begin{corollary}
Under the assumptions of Theorem \ref{maintheorem},  let $\delta$ be the exponent arising in Lemma \eqref{lemmaHI}. Assume $p(1+\delta) > n$, then
      $$ \Omega = \tilde{\Omega}.$$
\end{corollary}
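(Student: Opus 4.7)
The plan is to leverage directly the description of the singular set obtained in Step 2 of the proof of Theorem \ref{maintheorem}, together with the higher integrability provided by Lemma \ref{lemmaHI}. Recall that in that step one showed the inclusion
\[
\tilde{\Sigma} \subset \left\{ \tilde{x} \in \Omega : \limsup_{\varrho \to 0} \varrho^{p(1+\delta)-n} \int_{\tx{B}_\varrho(\tilde{x})} \tx{H}(x, Du)^{1+\delta} \dx > 0\right\},
\]
which was obtained by combining the characterization of $\tilde{\Sigma}$ via smallness of the frozen integrand average with the bound $\tx{H}^{-}_{\tx{B}_\varrho(\tilde{x})}(1/\varrho) \geq \varrho^{-p}$ and Lemma \ref{lemmaHI}.

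Under the standing hypothesis $p(1+\delta) > n$, the exponent $p(1+\delta)-n$ is strictly positive, so the factor $\varrho^{p(1+\delta)-n}$ tends to zero as $\varrho \to 0$. On the other hand, Lemma \ref{lemmaHI} ensures that $\tx{H}(\cdot, Du)^{1+\delta} \in L^1_{\loc}(\Omega)$; hence for any fixed $\tilde{x}\in \Omega$ contained in a ball $\tx{B}_R(\tilde{x})\Subset \Omega$, the quantity $\int_{\tx{B}_\varrho(\tilde{x})} \tx{H}(x, Du)^{1+\delta}\dx$ remains uniformly bounded (in fact it is infinitesimal) as $\varrho \to 0$, by absolute continuity of the Lebesgue integral. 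The product of these two factors therefore tends to zero, so the limsup in the display above vanishes identically on $\Omega$.

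Consequently the right-hand set is empty, whence $\tilde{\Sigma}=\emptyset$ and $\Omega=\tilde{\Omega}$. No further technical obstacle arises: the argument is a direct arithmetic consequence of the inclusion already established for $\tilde{\Sigma}$ and of the local integrability $\tx{H}(\cdot,Du)^{1+\delta}\in L^1_{\loc}$, so the whole proof reduces to observing the positivity of $p(1+\delta)-n$ and invoking Lemma \ref{lemmaHI}.
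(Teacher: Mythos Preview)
Your argument is correct and coincides with the paper's reasoning: the paper declares the corollary ``immediate'' and, in the last paragraph of the proof of Theorem~\ref{maintheorem}, notes that for $p(1+\delta)>n$ the right-hand side of \eqref{ss} is empty, hence $\tilde{\Sigma}=\emptyset$ and $\tilde{\Omega}=\Omega$. Your write-up simply spells out why that set is empty, using the positivity of $p(1+\delta)-n$ together with $\tx{H}(\cdot,Du)^{1+\delta}\in L^1_{\loc}(\Omega)$ from Lemma~\ref{lemmaHI}.
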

\noindent
The same corollary holds under the assumptions of Theorem \ref{maintheorem2} or Theorem \ref{maintheorem3} after introducing local coordinates on $\Omega$.  We conclude this section with a remark describing the part of the singular set lying in the region where the modulating coefficient $a(\cdot)$ is strictly positive, in the context of Theorem \ref{maintheorem}.

\begin{remark}
\normalfont  
Let us assume that $q(1+\delta) \leq n$, where again $\delta$ is the exponent arising in Lemma \ref{lemmaHI}. We now provide a characterization of the region of singular set \( \tilde{\Sigma} \cap \{ a(x)>0\}\) in terms of its Hausdorff dimension. Let \( \tilde{x} \in \tilde{\Sigma} \) be such that \( a(\tilde{x}) > 0 \). Similarly to Step 2 in Theorem \ref{maintheorem}, take \( \varrho \) sufficiently small, from
\[
\ff^{-}_{\tx{B}_{\varrho}(\tilde{x})}\left(\frac{1}{\varrho}\right) \geq \inf_{\tx{B}_{\varrho}(\tilde{x})} a(x)\, \varrho^{-q} > 0,
\]
we have
\[
\tilde{\Sigma} \cap \{ a(x) > 0 \} \subset \left\{ \tilde{x} \in \Omega : \limsup_{\varrho \to 0} \varrho^{q(1+\delta)-n} \int_{\tx{B}_{\varrho}(\tilde{x})} \ff(x, Du)^{1+\delta}\, \dx > 0 \right\}.
\]
By applying \cite[Proposition 2.7]{giusti}, it follows that
\eqn{hq}
$$
\dim_{\mathscr{H}}(\tilde{\Sigma} \cap \{ a(x) > 0 \}) \leq n - q-q\delta \quad \text{and so}\quad \mathscr{H}^{n-q}(\tilde{\Sigma} \cap \{ a(x) > 0 \}) = 0.
$$
\end{remark}

\section{Improved estimate for the Hausdorff measure of the singular set} \label{sec4}
\noindent
In this final section, we exploit the partial regularity result established in Theorems \ref{maintheorem}, \ref{maintheorem2}, \ref{maintheorem3}, to obtain some improved estimate for the Hausdorff measure of the singular set \( \tilde{\Sigma} := \Omega \setminus \tilde{\Omega} \).
The analysis distinguishes between two regimes, depending on the relation between the quantity \( q(1+\delta) \) and the dimension \( n \), where $\delta$ is the exponent coming up from Lemma \ref{lemmaHI}. In order to prove \eqref{2.23} we need to introduce some extra notation. First of all, for any $n$-dimensional open ball $\tx{B} \subset \Omega$ of radius $r(\tx{B}) \in (0,\infty)$ we introduce the (finite) function
$$ h_\ff(\tx{B}) := \int_{\tx{B}} \ff(x,1/r(\tx{B})) \dx.$$
Now, we use the standard Carathéodory's  construction to obtain an outer measure. We define the weighted $k$-approximating Hausdorff measure of a set $E \subset \Omega$ as
$$ \mathscr{H}_{\ff,k}(E) := \inf_{\mathcal{C}^k_E} \sum_{j} h_{\ff}(\tx{B}_j),$$
where
$$ \mathcal{C}^k_{E} := \left \{ \{ \tx{B}_j\}_{j \in \mathbb{N}} \text{ is a countable collection of balls $\tx{B}_j \subset \Omega$ covering } E \text{ such that } r(\tx{B}_j) \leq k\right \}.$$
Moreover, we define
$$ \mathscr{H}_{\ff}(E) := \lim_{k \to 0} \mathscr{H}_{\ff,k}(E).$$
Considering the functions $t \mapsto \text{ess sup}_{x \in \tx{B}} \ff(x,t)$  and $t \mapsto \essinf_{x \in \tx{B}} \ff(x,t)$ we introduce
$$  h^+_{\ff}(\tx{B}) := |\tx{B}|  \text{ess sup}_{x \in \tx{B}} \ff(x,1/r(\tx{B})), \quad h^-_{\ff}(\tx{B}) := |\tx{B}| \essinf_{x \in \tx{B}} \ff(x,1/r(\tx{B}))$$
and consequently
$$ \mathscr{H}^{\pm}_{\ff,k}(E) = \inf_{\mathcal{C}^k_E}\sum_j h^{\pm}_{\ff}(\tx{B}_j) \quad \text{and} \quad \mathscr{H}_{\ff}^{\pm}(E) = \lim_{k \to 0} \mathscr{H}^{\pm}_{\ff,k}(E).$$
The following proposition holds true, for the proof we refer to \cite[Section 6]{DFM}.
\begin{proposition} \label{prop2} Assume that $q \leq p+\alpha$ and $a(\cdot) \in C^{0,\alpha}(\Omega)$. Then, for any subset $E\subset \Omega$ and any $\delta \geq 0$  there exists a constant $c \equiv c([a]_{0,\alpha},\delta) \geq 1$ such that
    $$ \mathscr{H}^-_{\ff^{1+\delta}}(E) \leq \mathscr{H}_{\ff^{1+\delta}}(E) \leq \mathscr{H}^+_{\ff^{1+\delta}}(E) \leq c \mathscr{H}^-_{\ff^{1+\delta}}(E).$$
\end{proposition}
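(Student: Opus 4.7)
The first two inequalities $\mathscr{H}^-_{\ff^{1+\delta}}(E) \leq \mathscr{H}_{\ff^{1+\delta}}(E) \leq \mathscr{H}^+_{\ff^{1+\delta}}(E)$ are immediate. Indeed, for any ball $\tx{B} \subset \Omega$ with $r = r(\tx{B})$, the pointwise a.e.\ bounds $\essinf_{\tx{B}} \ff^{1+\delta}(\cdot, 1/r) \leq \ff^{1+\delta}(x, 1/r) \leq \operatorname*{ess\,sup}_{\tx{B}} \ff^{1+\delta}(\cdot, 1/r)$ integrate over $\tx{B}$ to give $h^-_{\ff^{1+\delta}}(\tx{B}) \leq h_{\ff^{1+\delta}}(\tx{B}) \leq h^+_{\ff^{1+\delta}}(\tx{B})$. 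Applying this to every member of a covering $\{\tx{B}_j\}\in\mathcal{C}^k_E$, summing, taking the infimum, and letting $k\to 0$ transfers the ordering to the three outer measures.

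The nontrivial content is the third inequality. Since all three measures are computed via coverings whose ball radii tend to zero, it is enough to establish the ball-wise estimate
\begin{equation*}
h^+_{\ff^{1+\delta}}(\tx{B}) \;\leq\; c\, h^-_{\ff^{1+\delta}}(\tx{B}), \qquad c \equiv c([a]_{0,\alpha}, \delta),
\end{equation*}
uniformly for balls $\tx{B} \subset \Omega$ of radius $r(\tx{B}) \leq 1$. Summing over an arbitrary $k$-admissible covering and passing to the infimum then yields $\mathscr{H}^+_{\ff^{1+\delta}, k}(E) \leq c\, \mathscr{H}^-_{\ff^{1+\delta}, k}(E)$ for every $k \leq 1$, and the conclusion follows by sending $k\to 0$.

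The ball-wise inequality reduces to a short calculation on $\ff(x,t)=t^p+a(x)t^q$. Evaluating at $t=1/r$ one gets
\begin{equation*}
\operatorname*{ess\,sup}_{\tx{B}} \ff(\cdot, 1/r) = r^{-p} + \Big(\sup_{\tx{B}} a\Big)\, r^{-q}, \qquad \essinf_{\tx{B}} \ff(\cdot, 1/r) = r^{-p} + \Big(\inf_{\tx{B}} a\Big)\, r^{-q}.
\end{equation*}
The H\"older continuity of $a$ gives $\sup_{\tx{B}} a-\inf_{\tx{B}} a\leq [a]_{0,\alpha}(2r)^\alpha$, so the oscillation term is bounded by $(\sup_{\tx{B}} a-\inf_{\tx{B}} a)\, r^{-q}\leq 2^\alpha [a]_{0,\alpha}\,r^{\alpha-q}$. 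The structural assumption $q\leq p+\alpha$ is equivalent to $\alpha-q\geq -p$, so for $r\leq 1$ one has $r^{\alpha-q}\leq r^{-p}$; combining these, $\operatorname*{ess\,sup}_{\tx{B}} \ff(\cdot,1/r)\leq c_1\,\essinf_{\tx{B}} \ff(\cdot,1/r)$ with $c_1\equiv c_1([a]_{0,\alpha})$. Raising to the power $1+\delta$ preserves essential sup and inf (since $t\mapsto t^{1+\delta}$ is monotone on $[0,\infty)$) and multiplication by $|\tx{B}|$ delivers $h^+_{\ff^{1+\delta}}(\tx{B})\leq c_1^{1+\delta}\, h^-_{\ff^{1+\delta}}(\tx{B})$, with the final constant depending only on $[a]_{0,\alpha}$ and $\delta$.

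No genuine obstacle arises: the only delicate point is the absorption of the oscillation $r^{\alpha-q}$ into $r^{-p}$, which is precisely the double phase balance $q-p\leq \alpha$. The argument is a mild reorganization of the one sketched in \cite[Section 6]{DFM}, adapted to the weight $\ff^{1+\delta}$.
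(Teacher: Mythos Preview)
Your proof is correct and follows the same approach as the one in \cite[Section~6]{DFM} to which the paper defers: the first two inequalities are trivial pointwise comparisons, and the third reduces to the ball-wise estimate $\sup_{\tx{B}}\ff(\cdot,1/r)\leq c_1\inf_{\tx{B}}\ff(\cdot,1/r)$ via the oscillation bound $\sup_{\tx{B}}a-\inf_{\tx{B}}a\leq [a]_{0,\alpha}(2r)^{\alpha}$ together with $q\leq p+\alpha$, after which raising to the power $1+\delta$ and restricting to coverings with $k\leq 1$ finishes the argument. Since the paper itself does not spell out the proof but merely cites the reference, your write-up is in fact more detailed than what appears here.
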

\noindent
Now, we are ready to prove Theorem \ref{theorem2}.
\begin{proof}[proof of Theorem \ref{theorem2}]
First of all, we observe that from Lemma \ref{lemmaHI} we have the existence of $\delta \equiv \delta(\tx{data})>0$ such that \eqref{suH} holds. The proof of \eqref{2.2} and \eqref{2.3} follow directly from Theorem~\ref{maintheorem}. In particular, \eqref{2.2} is a consequence of \eqref{hp}, while \eqref{2.3} follows from \eqref{hq}. As regards \eqref{2.23}, the proof is the same of \cite[Theorem 3]{DFM} and makes use of Proposition \ref{prop2}. The proof is complete. 
\end{proof}
\noindent
Theorem~\ref{the1.4} is a direct consequence of Theorem~\ref{theorem2} after introducing local coordinates on $\Omega$. As a further consequence, we obtain the following final result.
\begin{theorem}
Let $\Omega$ be a bounded open subset of a $n$-dimensional Riemannian manifold and        let $u \in W^{1,1}_{\loc}(\Omega, \mm)$ be a local minimizer of the functional $\mathcal{H}$ in \eqref{functional}, assume \eqref{pq}, with $\mm$ as in \ref{item:c} and satisfying \eqref{M}. Then, there exists $\delta \equiv \delta(\tx{data})>0$ such that \eqref{suH} holds. Moreover, if $q(1+\delta) \leq n$ then \eqref{2.23}-\eqref{2.3} are true.
\end{theorem}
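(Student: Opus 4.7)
The plan is to reduce this statement to Theorem~\ref{theorem2} (or equivalently to Theorem~\ref{the1.4}) by exploiting the fact that case~\ref{item:c} is genuinely simpler than cases~\ref{item:a}--\ref{item:b}: since $\mm$ is a compact submanifold of $\R^N$ without boundary, the set $\{u \in \partial\mm\}$ appearing in the Euler--Lagrange systems \eqref{equation} and \eqref{euler2} is empty. Consequently the boundary integral on the right-hand side of \eqref{euler2} vanishes identically, and one works with the bulk relation
\begin{equation*}
\int_{\Omega}(p|du|^{p-2} + qa(x)|du|^{q-2})\bigl(du^i \cdot d\varphi^i + du^i \cdot du^k D^{2}_{lk}\Pi^i(u)\varphi^l\bigr)\,\d\mathscr{H}^n = 0
\end{equation*}
for every admissible $\varphi \in (W^{1,\tx{H}}_0 \cap L^{\infty})(\Omega,\R^N)$.

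First, I would fix local coordinates on $\Omega$ exactly as in the proof of Theorem~\ref{maintheorem2}, rewriting $\mathcal{H}$ in the form \eqref{funpre} with coefficients $h_{\alpha\beta}$ satisfying the analogue of \eqref{ellipticity}. In these coordinates the Caccioppoli inequality of Lemma~\ref{CaccioppoliLemma} and the inner higher integrability of Lemma~\ref{lemmaHI} apply verbatim to a $\overline{\mm}$-valued constrained minimizer; in particular one obtains $\delta \equiv \delta(\tx{data}) > 0$ such that $\ff(\cdot,Du) \in L^{1+\delta}_{\loc}(\Omega)$, which is precisely \eqref{suH}.

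Next, under the assumption $q(1+\delta) \le n$, I would argue that the characterization of the singular set obtained in Remark~\ref{rm3.1} and the inclusions derived in Step~2 of Theorem~\ref{maintheorem} remain valid in the present setting, because in deriving them the boundary term in the Euler--Lagrange system plays no role: the only ingredients are the Caccioppoli inequality \eqref{caccioppoli2}, the higher integrability \eqref{hi}, and the elementary estimate $\ff^-_{\tx{B}_\varrho(\tilde{x})}(1/\varrho) \ge \varrho^{-p}$. Thus the inclusion
\[
\tilde{\Sigma} \subset \Bigl\{\tilde{x} \in \Omega : \limsup_{\varrho \to 0}\varrho^{p(1+\delta)-n}\int_{\tx{B}_\varrho(\tilde{x})}\ff(x,Du)^{1+\delta}\,\dx > 0\Bigr\}
\]
holds, together with its $(p,q)$-counterpart on $\{a(x) > 0\}$, yielding \eqref{2.2} and \eqref{2.3} by \cite[Proposition 2.7]{giusti}.

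Finally, to obtain the refined estimate \eqref{2.23} on the $\ff^{1+\delta}$-weighted Hausdorff measure, I would invoke Proposition~\ref{prop2} combined with the standard covering/Vitali argument used in the proof of \cite[Theorem~3]{DFM}, which only requires the improved integrability \eqref{suH} and the already-established structural inclusion for $\tilde{\Sigma}$. The main (minor) obstacle is purely bookkeeping: verifying that the Riemannian metric $h_{\alpha\beta}$ does not affect the integrability exponent $\delta$ nor the doubling/absolute-continuity properties of the frozen integrand $\ff^-_{\tx{B}_\varrho}$ used in Section~\ref{properties}; this is routine since $h_{\alpha\beta}$ is smooth, uniformly elliptic, and independent of $u$.
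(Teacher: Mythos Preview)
Your proposal is correct and follows the paper's own approach: the paper does not give a separate proof for this theorem, stating only that it is a ``further consequence'' of Theorem~\ref{the1.4} (which in turn follows from Theorem~\ref{theorem2} after introducing local coordinates), and your write-up simply makes explicit the reductions the authors leave implicit. One cosmetic slip: in case~\ref{item:c} the target is $\mm$ itself (compact, no boundary), so you should write ``$\mm$-valued'' rather than ``$\overline{\mm}$-valued,'' but this does not affect the argument.
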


\section*{Acknowledgements}

\noindent
We would like to thank Professor Christopher Hopper for his insightful explanations concerning the proof of Lemma 4.5 and 4.7 in \cite{H}.

This research was partly conducted while C. Pacchiano Camacho was as a Research Visitor at Uppsala University. This work was supported by a grant from Simons Foundation International SFI-MPS-T-Institutes-00011977 JS. \newline
A. Nastasi is member of the Gruppo Nazionale per l’Analisi
Matematica, la Probabilit\`{a} e le loro Applicazioni (GNAMPA) of the Istituto
Nazionale di Alta Matematica (INdAM).
A. Nastasi was partially supported by Grant \textit{D26-PREMIOGRUPPI-RIC-2024-NASTASI - RS Antonella Nastasi}  assigned by the Department of Engineering of University of Palermo. \newline
F. De Filippis has been partially supported through the INdAM - GNAMPA Project (CUP E5324001950001).

\end{document}